\newcommand{\etype}[1]{\renewcommand{\labelenumi}{(#1{enumi})}}
\def\eroman{\etype{\roman}}
\def\pSkip{\vskip 1.5mm \noindent}
\newcommand{\ds}[1]{\ {#1} \ }
\newcommand{\dss}[1]{\quad {#1} \quad }
\def\sm{\setminus}
\def\00{ \{ 0 \}}
\def\ptM{M}
\def\hlf{1/2 }
\def\veps{\varepsilon}
\def\vrp{\varphi}
\def\brvrp{\overline{\vrp}}
\def\tlV{\widetilde V}
\def\Rig{\operatorname{Rig}}
\def\Quad{\operatorname{Quad}}
\def\STR{\operatorname{STR}}
\def\dv{{\operatorname{div}}}
\def\X1{X_1}
\def\Y1{Y_1}
\def\nul{<_\nu}
\def\nule{\leq_\nu}
\def\tlb{\tilde b}
\def\tlq{\tilde q}
\newcommand{\trn}[1]{{}^{\operatorname {t}}{#1}}
\def\tT{\mathcal T}
\def\tG{\mathcal G}
\def\tL{\mathcal L}
\def\sq{{\operatorname{sq}}}
\def\Rig{\operatorname{Rig}}
\def\Quad{\operatorname{Quad}}
\def\QL{\operatorname{QL}}
\def\N{\mathbb N}
\def\Z{\mathbb Z}
\def\R{\mathbb R}
\def\Q{\mathbb Q}
\def\mfo{\mathfrak o}
\def\ql{q_{\QL}}
\def\elm{\xi}
\def\rg{\rho}
\def\al{\alpha}
\def\bt{\beta}
\def\gm{\gamma}
\def\sig{\sigma}
\newtheorem{thm}{Theorem} [section]
\newtheorem*{thm*}{Theorem}
\newtheorem*{prop*}{Proposition}
\newtheorem{cor}[thm]{Corollary}
\newtheorem{lem}[thm]{Lemma}
\newtheorem{lemma}[thm]{Lemma}
\newtheorem{prop}[thm]{Proposition}
\newtheorem{convention}[thm]{Convention}
\newtheorem*{claim*} {Claim}
\newtheorem*{theorem4.5'} {Theorem 4.5$'$}
\newtheorem{acknowledgment*}[thm] {Acknowledgment}
\newtheorem{exampl}[thm]{Example}
\newtheorem{example}[thm]{Example}
\newtheorem{examp}[thm]{Example}
\newtheorem*{exampleA*}{Example A}
\newtheorem*{exampleB*}{Example B}
 \newtheorem{remark}[thm]{Remark}
  \newtheorem{remarks}[thm]{Remarks}
 \newtheorem*{remark*}{Remark}
 \newtheorem{defn}[thm]{Definition}
\newtheorem{terminology}[thm]{Terminology}
\newtheorem{schol}[thm]{Scholium}
\newtheorem{notation}[thm]{Notation}
\newtheorem{notations}[thm]{Notations}
\newtheorem*{notation*} {Notation}
\newtheorem*{comment*} {Comment}
\newtheorem{rem}[thm]{Remark}
\newcommand{\thmref}[1]{Theorem~\ref{#1}}
 \newcommand{\propref}[1]{Proposition~\ref{#1}}
\newcommand{\secref}[1]{\S\ref{#1}}
\newcommand{\lemref}[1]{Lemma~\ref{#1}}
\newcommand{\corref}[1]{Corollary~\ref{#1}}
\newcommand\thmreff[2]{\pSkip\textbf{Theorem #1. }\emph{#2}}
\newcommand\propreff[2]{\pSkip\textbf{Proposition #1. }\emph{#2}\pSkip}
 \renewcommand{\sectionmark}[1]{}
\newcommand{\bfem}[1]{\textbf{#1}}
\newcommand{\lm}{\lambda}
\renewcommand{\a}{\alpha}
\def\|{\ds |}
\begin{document}

\title[Supertropical  Quadratic Forms I]
{Supertropical  Quadratic Forms I}
\author[Z. Izhakian]{Zur Izhakian}
\address{Institute  of Mathematics,
 University of Aberdeen, AB24 3UE,
Aberdeen,  UK.}
    \email{zzur@abdn.ac.uk; zzur@math.biu.ac.il}
\author[M. Knebusch]{Manfred Knebusch}
\address{Department of Mathematics,
NWF-I Mathematik, Universit\"at Regensburg 93040 Regensburg,
Germany} \email{manfred.knebusch@mathematik.uni-regensburg.de}
\author[L. Rowen]{Louis Rowen}
 \address{Department of Mathematics,
 Bar-Ilan University,  Ramat-Gan 52900, Israel}
 \email{rowen@math.biu.ac.il}



\thanks{The authors are grateful to the anonymous referee for the helpful comments and suggestions on an earlier version.}


\subjclass[2010]{Primary  11D09, 11E81, 15A63, 15A03, 15A15, 16Y60; Secondary
14T05, 13F30, 16W60}



\keywords{Tropical algebra, supertropical modules, bilinear forms,
quadratic forms,  quadratic pairs, supertropicalization.}




\begin{abstract}
We initiate the theory of a quadratic form $q$ over a semiring $R$,
with a view to study tropical linear algebra. As customary, one can
write
$$ q(x+y) = q(x) + q(y)+ b(x,y),$$ where $b$ is a \textbf{companion
bilinear form}. In contrast to the classical theory of quadratic
forms over a field, the companion bilinear form need not be uniquely
defined. Nevertheless, $q$ can always be written as a sum of
quadratic forms $q = \ql + \rg,$ where $\ql$ is \textbf{quasilinear}
in the sense that $\ql(x+y) = \ql(x) + \ql(y),$ and $\rg$ is
\textbf{rigid} in the sense that it has a unique companion.
In case that $R$ is   supertropical, we obtain an explicit
classification of these decompositions $q = \ql + \rg $ and of all
companions $b$ of $q$, and see how this relates to the
tropicalization procedure.

%
\end{abstract}

\maketitle

\setcounter{tocdepth}{1}
\tableofcontents

\numberwithin{equation}{section}
\section*{\qquad Introduction and basic notions}

 One of the major areas in tropical
mathematics is tropical linear algebra, as indicated in
\cite{ABG,AGG,Butkovic} and the volumes \cite{LitSer1},\cite{LitSer} of the
Litvinov--Sergeev conferences. This essentially is the study of
vectors in $F^{(n)},$ where $F$ is a semifield. In
\cite{IzhakianRowen2008Matrices}--\cite{IzhakianRowen2010MatrixIII}
and \cite{IzhakianKnebuschRowen2010LinearAlg} the theory of
supertropical matrices and linear algebra was developed, and in
particular bilinear forms were utilized in
\cite{IzhakianKnebuschRowen2010LinearAlg} in order to study
orthogonality in supertropical vector spaces. This leads one
naturally to quadratic forms, the subject of this paper.

Our objective in this paper is to lay a general foundation of the
theory of quadratic forms over semirings,
 in particular, semifields; in other words,   the scalars are not required to have negation. Examples of interest include the positive rational numbers, more generally the set of sums of squares in a field, and rational functions with positive coefficients.
 Our main motivation is tropical (and supertropical) mathematics,  as described e.g. in \cite{IMS},
\cite{MS}, which is usually done over the max-plus algebra
 over $(\R,+)$ or $(\Q,+)$. Tropical mathematics has attracted
 considerable interest in the last two decades, and as in the
 classical case, quadratic form theory enriches the study of
 tropical linear algebra. Although the max-plus algebra
 could be viewed as an ordered group, the semiring approach enables
 us to translate the appropriate algebraic structure to the tropical
 framework, as indicated in \cite{MS}.

  Recall that a \bfem{semiring}~$R$ is a set~$R$ equipped with addition and multiplication,
  such that both $(R,+)$ and $(R,\cdot)$ are abelian monoids\footnote{A monoid means a semigroup that has a neutral element.} with elements $0=0_R$ and $1=1_R$ respectively, and multiplication distributes over addition in the usual way.
  In other words, $R$ satisfies all the properties of a commutative ring except the existence of negation under addition. We call
  a semiring $R$ a \bfem{semifield}, if every nonzero element of $R$ is invertible;
  hence $R\setminus\{0\}$ is an abelian group. A semiring $R$ is
  \bfem{bipotent} (often called ``selective'' in the literature) if $a+b \in \{a,b\}$ for all $a,b \in R$.

As in the classical theory,
 one often wants to consider bilinear forms over modules over an arbitrary  semiring $R$.
A \bfem{module} $V$ over $R$ is an abelian monoid $(V,+)$ equipped
with a scalar multiplication $R\times V\to V,$ $(a,v)\mapsto av,$
such that exactly the same axioms hold as customary for modules if
$R$ is a ring:

$a_1(bv)=(ab)v,$ $a_1(v+w)=a_1v+a_2w,$ $(a_1+a_2)v=a_1v+a_2v,$
$1_R\cdot v=v,$ $0_R\cdot v=0_V=a_1\cdot 0_V$ for all $a_1,a_2\in
R,$ $v,w\in V.$

 Most often we write $0$ for both $0_V$ and $0_R$ and
1 for $1_R.$

We call an $R$-module $V$ \bfem{free}, if there exists a family
$(\veps_i \ds|i\in I)$ in $V$ such that every $x\in V$ has a unique
presentation $x=\sum\limits_{i\in I} x_i\veps_i$ with scalars
$x_i\in R$ and only finitely many $x_i $ nonzero, and we call
$(\veps_i \ds |i\in I)$ a \bfem{base} of the $R$-module $V.$ The
obvious example is $V = R^{(n)}$, with the classical  base. In fact,
any free module with a base of $n$ elements is obviously isomorphic
to $R^{(n)}$, under the map $\sum_{i=1}^n x_i\veps_i\mapsto
(x_1, \dots, x_n).$

In contrast to the classical case of vector spaces over a field,
modules over a bipotent semifield need not be free, but as
consolation, if they are free, bases  are unique up to scalar
multiplication, cf.~\thmref{uniq} below. \pSkip

 As in the classical
theory, we are led naturally to our main notion of this paper:

 \begin{defn}\label{defn:I.1.1} For any   module $V$  over a semiring $R$,   a \bfem{quadratic form
on} $V$ is a function $q: V\to R$
with\begin{equation}\label{eq:I.1.1} q(ax)=a^2q(x)\end{equation}
 for any $a\in R,$ $x\in V,$ together with a
symmetric bilinear form $b: V\times V\to R$ (not necessarily
uniquely determined by $q$) such that for any $ x,y\in V$
\begin{equation}\label{eq:I.1.2} q(x+y)=q(x)+q(y)+b(x,y).\end{equation}
 Every such bilinear form $b$ will be called a
\bfem{companion} of $q$, and the pair $(q,b)$ will be called a
\bfem{quadratic pair} on $V.$ \end{defn}
 When $R$ is a
ring, then $q$ has just one companion, namely,
$b(x,y):=q(x+y)-q(x)-q(y),$ but if $R$ is a semiring which cannot be
embedded into a ring, this usually is wrong, and much of this paper
is concerned with investigating these companions.
%

These semirings are closely related to (totally) ordered monoids with an absorbing element~$0$. Any such  monoid gives rise to the familiar \bfem{max-plus algebra}, whose multiplication is the original monoid operation, and whose addition is the maximum in the given ordering. The ensuing algebraic structure is that of a (commutative) \bfem{bipotent semiring}. Conversely, in ``logarithmic'' notation customarily used in tropical geometry, bipotent semirings appear as ordered additive monoids with absorbing element $-\infty.$ The primordial object here is the bipotent semifield $T(\R)=\R\cup\{-\infty\},$ cf. e.g. \cite[\S1.5]{IMS}.

%

It turns out that one can delve deeper into the underlying algebraic
structure by taking the so-called \bfem{supertropical} cover of
$T(\R)$. The first author  \cite{zur05TropicalAlgebra}  introduced a
cover of~$T(\R),$ graded by the multiplicative
monoid~$(\Z_2,\cdot)$, which was dubbed the \textit{extended
tropical arithmetic}. Then, in \cite{IzhakianRowen2007SuperTropical}
and \cite{IzhakianRowen2008Matrices}, this structure was amplified
to the notion of a \bfem{supertropical semiring}. A supertropical
semiring~$R$ is a semiring equipped with a ``ghost map'' $\nu=\nu_R:
R\to R$, which respects addition and multiplication and is
idempotent, i.e., $\nu\circ\nu=\nu.$ Moreover, in this semiring
$a+a=\nu(a)$ for every $a\in R$ (cf.
\cite[\S3]{IzhakianKnebuschRowen2009Valuation}). This replaces the
property $a+a=a$ taking place in the usual max-plus (or min-plus)
arithmetic. We write $a ^\nu$ for $\nu(a).$ We call $a ^\nu$ the
``\bfem{ghost}'' of $a,$ and we call the non-ghost elements of~$R$
``\bfem{tangible}''. (The element 0 is regarded both as tangible and
ghost.) $R$ then carries a multiplicative idempotent $e=1+1 = 1
^\nu$ such that $a ^\nu=ea$ for every $a\in R.$ The image $eR$ of
the ghost map, called the \bfem{ghost ideal} of $R,$ is itself a
bipotent semiring.

In this paper we are concerned primarily with quadratic forms on
$F^{(n)}$ where $F$ is a tangible supersemifield. Nevertheless, at
times one should  also admit certain semirings  related to
supertropical semirings, but which themselves are not supertropical,
e.g., polynomial function semirings in any number of variables over
a supertropical semiring, cf.
\cite[\S4]{IzhakianRowen2007SuperTropical}. Thus, we formulate
results in this generality, with the understanding that often  $V=
F^{(n)}$.

%
%

 Here is an overview of the main contents
of this paper. The basic notation and definitions about quadratic
pairs and forms are provided in \S\ref{sec:I.1}, including the
``companion'' bilinear form. An important special case treated in
\S\ref{quasilin}:
 We call a quadratic form $q$ \bfem{quasilinear} if  $ q(x+y) = q(x) + q(y)$  for all $x,y \in V.$
Quasilinear forms over a supertropical semifield have been
considered in \cite[\S5]{IKR-LinAlg2}.

As mentioned earlier, a key digression from the classical theory is
that the companion bilinear form need not be unique. Consequently,
we call~$q$ \bfem{rigid} if $q$ has only one companion. The basic
properties of rigidity are given in \S\ref{sec:I.2}.

 \thmreff{\ref{thm:I.2.13}}{For $V$  free with base
$(\veps_i \ds | i \in I )$, a quadratic form $q$ on~$V$ is rigid iff
$q(\veps_i) = 0$ for all $i \in I$. }


This is a fact of central importance for the whole paper.

 \propreff{\ref{prop:I.5.1}}{Any quadratic form $q$ can always be written as a sum of quadratic
forms $q = q_{\QL} + \rg,$ where $q_{\QL}$ is  quasilinear (and
uniquely determined) and $\rg$ is  rigid (but is not unique).} We
call this a \bfem{quasilinear-rigid decomposition} of $q$. Such
decompositions, perhaps the main theme of this paper, comprise a
subject completely alien to the classical theory of quadratic forms
over fields. Nothing similar to the quasilinear-rigid decomposition
seems to arise in the classical theory.
 (These decompositions will also be a major theme in
~\cite{QF2}.)

Since the rigid part is not unique, we describe
 the set of companions of a given quadratic form $q: V\to R$ in
 terms of the \textbf{companion matrix} (Theorem \ref{thm:I.3.3}), which is unique when the
 form $q$ is rigid. Companion matrices are obtained in the supertropical case
 by means of~
 Theorem~\ref{thm:I.3.13}.
In the special case that $R$ is a nontrivial tangible
supersemifield, the results in \S\ref{sec:I.4}, especially Theorems
\ref{thm:I.4.12} and \ref{thm:I.4.13}, allow a precise description
of all rigid complements of a given quadratic form, in
Theorems~\ref{thm:I.5.12} and \ref{thm:I.5.13}.

Finally, in \secref{sec:I.11}, we tie the theory to the classical
theory of quadratic forms,   spelling out the concept of
supertropicalization of a quadratic form by a supervaluation.
Supertropical semirings allow a refinement of valuation theory to a
theory of ``\emph{supervaluations}'', the basics of which can be
found in \cite{IzhakianKnebuschRowen2009Valuation}--\cite{IKR3}.
Supervaluations may provide an enriched version of tropical
geometry, cf. \cite[\S9, \S11]{IzhakianKnebuschRowen2009Valuation}
and \cite{IzhakianRowen2007SuperTropical}, as well as of tropical
matrix theory and the associated linear tropical algebra. They also
enable us to gain new insights for quadratic forms on a free module
$V$ over a ring $R$, in relation to an arbitrary base. This will be
indicated in \S\ref{sec:I.11}, where a strategy for further research
is laid out. But exploiting this concept in depth requires more
theory of quadratic forms and pairs over a supertropical semiring,
some of which is to be presented in \cite{QF2}.

The supertropicalization  process is up to now the main source of our interest in quadratic forms over semiring, a rather small class within the category of all semirings.  Supertropical quadratic forms are very rigid objects, well amenable to combinatorial arguments. One reason for this is \thmref{uniq} below (the Unique Base Theorem), which state that a free module over a supertropical semiring up to multiplication by units has only one base.

Given a quadratic form $q: V \to R $ on a free module $V$ over a ring $R$ and a base $\tL=(v_i\|i\in I)$ of $V$, further a supervaluation $\vrp:R \to U$ with values in a supertropical semiring $U$,  we obtain by $\vrp$ a quadratic form $\tlq : U^{(I)} \to U$ on the standard free $U$-module $U^{(I)}$. Our idea is to view the isometry class of $\tlq$ itself as an invariant of the pair $(q, \tL)$, which measures the base $\tL$ of $V$ by the quadratic form $q$ via the supervaluation $\vrp$. This invariant will be clumsy when  $I$ is big, but by structure theory of supertropical quadratic forms we can gain more manageable invariants from $\tlq$. (More on this will be said below in \S\ref{sec:I.11}.)

Concerning explicit computations, we emphasize that every base $\tL$ of $V$ is admitted here, while in classical quadratic form theory, even over a field $R$, often first a base of $V$ has to be established fitting the form and the problem (e.g., an orthogonal base if $\operatorname{char} R \neq 2$).

A very important fact here is that a supertropical semiring $U$ carries a natural partial ordering, called the \bfem{minimal ordering of} $U$, which is defined simply by the rule
 \begin{equation}\label{eq:0.7}
 x\le y \dss \Leftrightarrow \exists z\in U: x+z=y.\end{equation}
 A semiring with this property is called \textbf{upper bound} (abbreviated u.b.), a notion
 used in supertropical context already in \cite[\S11]{IKR3}. The minimal ordering allows arguments completely alien to classical quadratic form theory over rings. For convenience of the reader we will discuss upper bound semirings anew in \S\ref{sec:I.ub} below.

We advocate that in a full fledged supertropical quadratic form theory it is mandatory to admit also certain semirings, which are related to supertropical semirings, but themselves are not supertropical, e.g., polynomial function semirings in any number of variables over a supertropical semiring, cf. \cite[\S4]{IzhakianRowen2007SuperTropical}.
A modest theory of quadratic forms and quadratic pairs over an u.b. semiring seems to be a reasonable general frame in
which to place supertropical quadratic form theory.
Several  u.b.-results along these lines can be found in \secref{sec:I.3} and \secref{sec:I.5}.

\begin{notations}\label{notat:0.1} Let
$\N= \{1,2,3,\dots\},$ $\N_0=\N\cup\{0\}.$
If $R$ is a semiring, then $R^*$ denotes the group of units of $R$.

If $R$ is a supertropical semiring, then
\begin{itemize}
  \item
$\tT(R):= R\setminus eR=$ set of tangible elements $\ne0,$ \pSkip

  \item
$\tG(R): = eR\setminus\{0\}=$ set of ghost elements $\ne0,$
\pSkip

  \item
$\nu_R $ denotes the ghost map $R \to eR, $ $a\mapsto ea.$

\end{itemize}

When there is no ambiguity, we write $\tT$, $\tG$, $\nu$ instead of
$\tT(R),$ $\tG(R),$ $\nu_R.$

For $a\in R$ we also write $ea= \nu(a)=a^\nu.$

\end{notations}

\subsection*{Some aspects of supertropical algebra}\label{sec:I.1s}$ $

Let us set some terminology for this paper, and list some basic
supertropical results. An equivalent formulation, utilizing $e: =
1+1$ instead of the ghost map $\nu$, is as follows:

\begin{defn}[cf. {\cite[\S3]{IzhakianKnebuschRowen2009Valuation}}]\label{defn:0.2}
A semiring $R$ is \bfem{supertropical}, if $e$ is an idempotent
element (i.e., $1+1=1+1+1+1)$ and the following axioms hold for all
$x,y\in R.$
 \begin{equation}\label{eq:0.3} \quad
\text{If}\ ex\ne ey,\quad\text{then}\quad x+y\in\{x,y\}.
 \end{equation}
 \begin{equation}\label{eq:0.4}
\text{If}\ ex= ey,\quad\text{then}\quad x+y=ey.
 \end{equation}
\end{defn}

 If $R$ is supertropical, then $x+y\in \{x,y\}$ for any $x,y\in eR,$ and thus the ideal $eR$ is a bipotent semiring with unit element $e.$
 Any bipotent semiring $M$  carries a total ordering given by the rule
  \begin{equation}\label{eq:0.5}
a \leq b \dss \Leftrightarrow a + b= b.
 \end{equation}
 Using this ordering on $eR$, \eqref{eq:0.3} sharpens to the following condition $(x,y\in R):$
 \begin{equation}\label{eq:0.6}
\text{If}\ ex< ey,\quad\text{then}\quad x+y =  y.
 \end{equation}
 Conditions \eqref{eq:0.4} and \eqref{eq:0.6} show that addition on $R$ is determined by the ordering of the bipotent semiring $eR,$ the idempotent $e,$ and the map $\nu_R: R\to eR,$ $x\mapsto ex.$ $\nu_R$ is the ghost map, and $eR$ is the ghost ideal of $R$ mentioned above.

\begin{rem}\label{assump}  Any  supertropical semiring satisfies the following
conditions for all $a_, b\in R:$
\begin{enumerate} \eroman
\item $a+a=0\Rightarrow a=0,$
as follows from \eqref{eq:0.4}, applied to the elements $a$ and~$0.$
\pSkip \item $(a+b)^2=a^2+b^2.$ (See proof below.) \end{enumerate}
\end{rem}

 We prove Remark~\ref{assump} more generally for the following
important class of examples. Given a set $I$ and a semiring $U$, let $U^{I}$ denote the
semiring consisting of all $U$-valued functions on $I$, i.e., the
cartesian product $\prod_I U$ of copies of $U$ indexed by $I$.

\begin{prop}\label{prop:I.2.15}
If $R$ is any semiring of functions with values in a supertropical
semiring~$U$, i.e., $R$ is a subsemiring of $U^I$ for some set $I$,
then the  conditions of Remark~\ref{assump} hold.\footnote{Property
(ii) has been asserted before for $U$ supertropical, e.g. in
\cite[Proposition~ 3.9]{IzhakianRowen2007SuperTropical}, but the
proof there tacitly assumes that $U$ is cancellative, which holds
when $U$ is a supersemifield.}

\end{prop}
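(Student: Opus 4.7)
The plan is to reduce both conditions to statements about $U$ itself. Since $R$ sits inside $U^I$ with pointwise operations, it suffices to verify (i) and (ii) for every pair of elements of $U$; they then hold pointwise in $U^I$ and a fortiori in the subsemiring $R$.

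For (i), suppose $a+a=0$ in $U$. In any supertropical semiring $a+a=ea$, so $ea=0=e\cdot 0$. Applying \eqref{eq:0.4} with $x=a$, $y=0$ gives $a+0=e\cdot 0=0$, and since $a+0=a$ we conclude $a=0$. (This is exactly the argument sketched in Remark~\ref{assump}(i).)

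For (ii) I would avoid expanding $(a+b)^2=a^2+2ab+b^2$ and trying to absorb the cross term $2ab=e(ab)$ directly; instead, I would first use the supertropical trichotomy (\eqref{eq:0.4} and \eqref{eq:0.6}) to simplify $a+b$, and only then square. If $ea>eb$, then \eqref{eq:0.6} gives $a+b=a$, so $(a+b)^2=a^2$; multiplying $a+b=a$ by $a$ and by $b$ yields $a^2+ab=a^2$ and $ab+b^2=ab$, hence $a^2+b^2=a^2+ab+b^2=a^2+ab=a^2$, matching $(a+b)^2$. The case $ea<eb$ is symmetric. If $ea=eb$, then \eqref{eq:0.4} gives $a+b=ea$, so $(a+b)^2=(ea)^2=ea^2$; and since $e(a^2)=(ea)^2=(eb)^2=e(b^2)$, a second application of \eqref{eq:0.4} to $a^2$ and $b^2$ gives $a^2+b^2=e(b^2)=ea^2$. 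In all three cases $(a+b)^2=a^2+b^2$, as required.

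The main obstacle is case (ii): the temptation is the frontal approach, comparing $e(ab)$ with $e(a^2+b^2)$ in the ghost ideal $eU$ and concluding that $e(ab)$ is additively absorbed by $a^2+b^2$. But without cancellativity in $eU$ one can have $e(ab)=e(a^2+b^2)$ even when $ea<eb$, and then \eqref{eq:0.4} only tells us that $e(ab)+(a^2+b^2)$ equals the ghost $e(a^2+b^2)$, not $a^2+b^2$ itself. This is precisely the gap noted in the footnote. The detour above --- squaring the already-simplified form of $a+b$ --- sidesteps the cross term entirely and uses only \eqref{eq:0.4}, \eqref{eq:0.6}, and distributivity; no cancellation in $eU$ is required, so the argument goes through for every supertropical semiring $U$.
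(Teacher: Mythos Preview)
Your proof is correct. Both you and the paper reduce to $U$ and split on the trichotomy for $ea$ versus $eb$, and the equal case is handled identically. The genuine difference lies in the strict case $ea<eb$ (your $ea>eb$, symmetrically): the paper expands $(a+b)^2=a^2+b^2+eab$ and must then split further according to whether $ea^2<eb^2$ or $ea^2=eb^2$, the latter subcase requiring a short detour to show $b^2=eb^2$. Your trick---multiplying the already-simplified identity $a+b=a$ by $a$ and by $b$ to obtain $a^2+ab=a^2$ and $ab+b^2=ab$, and then chaining $a^2+b^2=(a^2+ab)+b^2=a^2+(ab+b^2)=a^2+ab=a^2$---avoids the cross term altogether and needs no subcase split. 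This is a cleaner route; the paper's approach has the minor compensating virtue of making explicit why the cancellative argument in the literature breaks down (namely, Case~2 can actually occur).
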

\begin{proof} Plainly it suffices to prove (ii) for $R =U.$ We may assume that $ea \leq eb.$ Then $e a^2 \leq eab
\leq eb^2.$ We distinguish three cases.
\begin{description}
  \item[Case 1] $ea < eb$, $ea^2 < eb^2.$ Now $a + b = b$, $a^2 + b^2 = b^2 = (a + b)^2.$ \pSkip

  \item[Case 2] $ea < eb$, $ea^2 = eb^2.$ Now $(a + b)^2 = b^2$. On the other hand
  $(a + b)^2 = a^2 + b^2+ eab =  eb^2+ eab.$ We infer that $b^2 = eb^2,$ whence
   $$a^2 + b^2 = eb^2 = b^2 = (a + b)^2.$$ \pSkip

  \item[Case 3]  $ea = eb$, $ea^2 = eb^2.$ Now $a + b = eb$, $a^2 + b^2 = eb^2 = (a + b)^2.$
 \end{description}
\end{proof}

\begin{defn}\label{defn:I.4.2} A supertropical semiring $R$ is   \bfem{tangible}, if $R$ is generated by~
$\tT(R)$ as a semiring. Clearly, this happens iff $e\tT(R)=\tG(R).$

 A supertropical semiring $R$ is a
\bfem{supersemifield}, if all tangible elements of~$R$ are
invertible in~$R$ and all ghost elements $\ne0$ of~$R$ are
invertible in the bipotent semiring~$e R.$
 \end{defn}

Then~$eR$ is a semifield, and $\tT(R)$ is the group of units of $R$,
except in the degenerate case  $\tT(R)=\emptyset,$ whereby
$R=eR.$\footnote{Discarding an uninteresting case, we also assume in
\S\ref{sec:I.4} that $R$ is \bfem{nontrivial}, i.e. $eR\ne\{0,e\}.$}

\begin{rem}\label{rem:I.4.3}
If $R$ is a supertropical semiring  and $ \tT(R)\ne\emptyset,$ then
the set
$$R':=\tT(R)\cup e \tT(R)\cup\{0\}$$
is the largest subsemiring of $R$ which is tangible supertropical.
\pSkip
%


\end{rem}

\begin{exampl}[{\cite[\S3]{IzhakianKnebuschRowen2009Valuation}}]\label{stropi} Given a monoid $\tT$ and a totally ordered  cancellative monoid
$\tG$, together with a surjective homomorphism $\nu: \ptM \to \tG,$
the disjoint union $ \tT
\sqcup \tG$ is made into a monoid by starting with the given products on $\tT$ and $\tG$, and defining the products $ a
b^\nu $ and $ a^\nu b $ to be $ (ab)^\nu$ for $a,b \in \tT$. (We write $\nu(c) = c^\nu$ for $c \in \tT$.)

We extend $\nu$ to the \textbf{ghost map} $\nu:  \tT \sqcup \tG \to
\tG$ by taking $\nu|_\ptM = \nu$ and  $\nu_\tG$ to be the identity
on~$ \tG$. Thus, $\nu$ is a monoid projection.

We define addition on $ \tT \sqcup \tG$ by
 $$a+b =
\begin{cases} a \text{ for } a^\nu >  b^\nu;\\b \text{ for } a^\nu
< b^\nu;\\a^\nu \text{ for } a^\nu = b^\nu.
\end{cases}$$

We adjoin an element 0 to $\tT \sqcup \tG $ and put $0^\nu = 0$,
$0+x = x+0 = x,$ and $0\cdot x = x \cdot 0 = 0$ for all $x\in \tT
\sqcup \tG \sqcup \{0\}$. Then $R: =  \tT \sqcup \tG \sqcup \{0\}$
is a supertropical semiring, denoted by $\STR(\tT, \tG, \nu)$ and
called the \textbf{standard supertropical semiring}
arising from the monoid homomorphism $\nu : \tT \to \tG$.
\end{exampl}

When $\tG$ is a group, we call $R$  a \textbf{standard tangible
supersemifield}.
%
%

\begin{thm}[Unique Base Theorem]\label{uniq} The  base     of a free module over a
supertropical semiring is
  unique  up to permutation of base elements and multiplication by
invertible elements of $R$.
\end{thm}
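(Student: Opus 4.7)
The plan is to compare two bases by means of their transition ``matrices'' and exploit the fact that in a supertropical semiring a vanishing sum forces each summand to vanish.

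First I would establish the following key algebraic fact: in any supertropical semiring $R$, if $a_1+\dots+a_n = 0$, then $a_1 = \dots = a_n = 0$. For $n=2$: by axioms \eqref{eq:0.3}--\eqref{eq:0.4}, either $ea \ne eb$, in which case $a+b\in\{a,b\}$, so one of them is $0$ and the other follows, or else $ea = eb$, in which case $a+b = ea$, giving $a+a = ea = 0$ and so $a = 0$ by Remark~\ref{assump}(i), and likewise $b = 0$. The general case follows by straightforward induction.

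Given two bases $(\veps_i\ds| i\in I)$ and $(\veps'_j\ds| j\in J)$ of the free module $V$, I would write the transition expansions
\[
\veps'_j=\suml_{i} a_{ij}\veps_i,\qquad \veps_i=\suml_{j} b_{ji}\veps'_j,
\]
each with only finitely many nonzero scalars. Substituting the second into the first (or conversely) and invoking the uniqueness of the representation in the basis $(\veps_i)$, I obtain $\suml_{j}b_{ji}a_{kj} = \delta_{ik}$ for all $i,k\in I$. For $k\ne i$ the right-hand side is $0$, so by the key fact each term $b_{ji}a_{kj}$ is $0$. For $k=i$ the sum equals $1\ne 0$, so some index $j=j(i)$ satisfies $b_{ji}a_{ij}\ne 0$; in particular $b_{ji}\ne 0$, whence $a_{kj} = 0$ for every $k\ne i$, i.e.\ $\veps'_{j} = a_{ij}\veps_i$. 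Running the symmetric computation (substituting the first expansion into the second) with this same $j$, the nonvanishing of $a_{ij}$ forces $b_{ki}=0$ for every $k\ne j$, giving $\veps_i = b_{ji}\veps'_j$. Combining, $\veps_i = b_{ji}a_{ij}\veps_i$, and uniqueness of the expansion of $\veps_i$ in the basis $(\veps_k)$ yields $b_{ji}a_{ij} = 1$; thus $a_{ij}$ and $b_{ji}$ are mutually inverse units of $R$.

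It then remains to show that the assignment $i\mapsto j(i)$ is a bijection $I\to J$: if $j_1\ne j_2$ both corresponded to the same $i$, then $\veps'_{j_1}$ and $\veps'_{j_2}$ would both be unit multiples of $\veps_i$, hence unit multiples of each other, contradicting uniqueness of representation in the basis $(\veps'_k\ds|k\in J)$; and the symmetric argument shows surjectivity. Thus, up to a permutation of indices, $\veps'_{j(i)}$ is a unit multiple of $\veps_i$, as asserted. The main obstacle, conceptually, is the initial step: establishing that $0$ admits only the trivial expression as a sum, since this is precisely where the supertropical axioms are used and distinguishes the situation from arbitrary semirings where the theorem fails.
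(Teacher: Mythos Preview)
Your overall strategy matches the paper's --- compare two bases via their transition matrices and use that in a supertropical semiring a zero sum forces each summand to vanish --- and your preliminary lemma to that effect is correct and well argued. The paper's proof uses the same ingredients but is slightly terser.

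There is, however, a genuine gap. From $\sum_j b_{ji}a_{ij}=1$ you only extract that \emph{some} term $b_{ji}a_{ij}$ is nonzero, and then from $b_{ji}a_{kj}=0$ (for $k\ne i$) you conclude $a_{kj}=0$ because ``$b_{ji}\ne 0$''. This cancellation step assumes $b_{ji}$ is not a zero divisor, which is not part of the definition of a supertropical semiring; one can cook up bipotent semirings (hence supertropical, with $e=1$) such as $\{0,a,1\}$ with $a\cdot a=0$ that have zero divisors. So the inference ``$b_{ji}\ne 0$ and $b_{ji}a_{kj}=0$ $\Rightarrow$ $a_{kj}=0$'' is unjustified as stated.

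The fix is exactly what the paper does (implicitly): strengthen ``some term is nonzero'' to ``some term equals $1$''. Indeed, by the addition rules \eqref{eq:0.4} and \eqref{eq:0.6}, a finite sum in a supertropical semiring is either one of the summands or a ghost; since $1$ is tangible (or, in the degenerate case $e=1$, the semiring is bipotent and the sum is the maximum), the equation $\sum_j b_{ji}a_{ij}=1$ forces $b_{j(i),i}a_{i,j(i)}=1$ for some $j(i)$. Now $b_{j(i),i}$ is a \emph{unit}, so from $b_{j(i),i}a_{k,j(i)}=0$ you legitimately get $a_{k,j(i)}=0$ for $k\ne i$, and your argument proceeds as written. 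With this correction your proof is complete and essentially coincides with the paper's.
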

\begin{proof}The case of a semifield is given in \cite[Corollary~5.25]{IzhakianKnebuschRowen2010LinearAlg}, but the general
proof is not hard. Suppose $(\veps_i \ds |i\in I)$ and $(\veps'_j
\ds |j\in J)$ are bases, and write $\veps'_j = \sum _i a _{i,j}
\veps_i$ and $\veps_i = \sum _j b_{j,i} \veps'_j$ . Then for any $k
\in I,$
 $$\veps_{k} =   \sum _j \sum _i a _{i,j} b_{j,{k}} \veps_i,$$
 implying $\sum _j  a _{k,j} b_{j,{k}} = 1$ and $\sum _j b_{j,{k}} \sum _i a _{i,j} = 0$ for all $i \ne k.$
 Hence there is $\ell  = \pi(k)$ such that $ a _{k,\ell } b_{\ell ,{k}}   = 1$, implying  $a _{k,\ell }, b_{\ell ,{k}}  $ are invertible,
 and thus  $a _{i,\ell } = 0$ for all $i \ne k$, so $\veps'_{\ell } =   a _{k,\ell } \veps_{k}$ Likewise
  $\veps_{k} =   b _{\ell ,k} \veps'_{\ell }$. Doing this for each $k\in I$ yields the result.
\end{proof}

(Equivalently, the only invertible matrices over a supertropical
semiring are generalized permutation matrices.)

\section{Quadratic forms over a semiring}\label{sec:I.1}

We assume that $R$ is a semiring (always commutative) and $V$ is an
$R$-module. Recall Definition~\ref{defn:I.1.1}. Given a quadratic
pair $(q,b)$ on $V$, it follows from \eqref{eq:I.1.1} and
\eqref{eq:I.1.2} with $x=y$ that
\begin{equation}\label{eq:I.1.3}
4q(x)=2q(x)+b(x,x).\end{equation}

\begin{defn}\label{defn:I.1.2}
We call a quadratic pair $(q,b)$ on $V$ \bfem{balanced} if, for any
$x\in V$,
\begin{equation}\label{eq:I.1.4}
b(x,x)=2q(x).\end{equation}
\end{defn}

 \begin{remark}\label{rem:I.1.3}
If $R$ is a ring, then a quadratic pair  $(q,b)$ is determined by
 the quadratic form~ $q$ alone, namely
 \begin{equation}\label{eq:I.1.5} b(x,y)=q(x+y)-q(x)-q(y),\end{equation}
 and, moreover, $(q,b)$ is balanced.  If, in addition, $2$ is a unit in $R$, then
we have a bijection between  quadratic forms $q$ and symmetric
bilinear forms $b$      on $V$ via \eqref{eq:I.1.5} and
\begin{equation}\label{eq:I.1.6} q(x)=\frac{1}{2}b(x,x),\end{equation}
as is very well known.\end{remark}

But  our main interest is  in the case that $R$ is a supertropical
semiring (or even a  supersemifield). Then quadratic forms and
symmetric bilinear forms are only loosely related, and bilinear
forms which are not symmetric  play a major role. Also we will meet
many quadratic pairs which are not balanced.

\begin{examp}\label{examp:I.1.4}
Any bilinear form $B:V\times V\to R$ on $V$ (not necessarily
symmetric) gives us a balanced quadratic pair $(q,b)$ on $V$ as
follows $(x,y\in V)$:
\begin{equation}\label{eq:I.1.7} q(x):=B(x,x),\end{equation}
\begin{equation}\label{eq:I.1.8} b(x,y):=B(x,y)+B(y,x).\end{equation}
\end{examp}

\begin{example}\label{examp:I.1.6}
If $B: V\times V\to R$ is a {symmetric} bilinear form, then the
quadratic form $q(x):=B(x,x)$ together with the bilinear form
$b(x,y)=2B(x,y)$ is a balanced quadratic pair (cf. Example
\ref{examp:I.1.4}).
 \end{example}

We spell out what the equations \eqref{eq:I.1.3} and
\eqref{eq:I.1.4} mean if the semiring $R$ is bipotent or
supertropical.

\begin{remark}\label{rem:I.1.7} Let $(q,b)$ be a quadratic pair on $V$.
\begin{enumerate} \eroman
    \item Suppose that $R$ is  bipotent. Then $2 \cdot 1_R =
    1_R$, and thus \eqref{eq:I.1.3} reads $$q(x) = q(x) + b(x,x),$$ which
    means that\footnote{Recall that a bipotent semiring has a natural total ordering: $x \leq y$ iff $x+y = y.$ }
    \begin{equation}\label{eq:I.1.10}
b(x,x) \leq q(x).
\end{equation}
The pair $(q,b) $  is balanced iff for all $x \in V$
    \begin{equation}\label{eq:I.1.11}
b(x,x) =  q(x).
\end{equation}

    \item Assume that $R$ is supertropical. Now
$2 \cdot 1_R = e.$ Thus \eqref{eq:I.1.3} reads
$$ eq(x) = eq(x) + b(x,x)$$
which means that
        \begin{equation}\label{eq:I.1.12}
eb(x,x) \leq eq(x).
\end{equation}
The pair $(q,b)$ is balanced iff
    \begin{equation}\label{eq:I.1.13}
b(x,x) = eq(x)
\end{equation}
for all $x\in V.$
\end{enumerate}

\end{remark}

\medskip
 We return to an arbitrary semiring $R$ and now focus on the case that $V$ is a free $R$-module with
 (classical) base $\veps_1,\dots,\veps_n$ and
 describe a quadratic pair $(q,b)$ in explicit terms. We write
 elements $x,y$ of $V$ as
 $$x=\sum_{i = 1}^n x_i\veps_i,\qquad y=\sum_{ i = 1} ^n
 y_i\veps_i , $$
 with $x_i,y_i\in\R.$ We use the abbreviations
 \begin{equation}\label{eq:I.1.14} \al_i:=q(\veps_i),\qquad \bt_{i,j}:=b(\veps_i,\veps_j).\end{equation}
Applying conditions \eqref{eq:I.1.1} and \eqref{eq:I.1.2} and
iterating, we obtain first
$$q(x)=q\bigg(\sum_{ i = 1} ^{n-1}x_i\veps_i\bigg)+\al_nx_n^2+\sum_{ i = 1}^{n-1}\bt_{i,n}x_ix_n,$$
and finally
\begin{equation}\label{eq:I.1.15}
q(x)=\sum_{ i = 1}^{n}\al_ix_i^2+\sum_{i< j} \bt_{i,j}
x_ix_j.\end{equation}

We furthermore have
 \begin{equation}\label{eq:I.1.16} b(x,y)=\sum_{i,j=1}^n
 \bt_{i,j} x_iy_j,\end{equation} with $\bt_{i,j}=\bt_{j,i},$ and, in consequence of \eqref{eq:I.1.3},
%
\begin{equation}\label{eq:I.1.17} 4\al_i=2\al_i+\bt_{i,i} .\end{equation}

\begin{defn}\label{defn:I.1.8}We call the $\al_i$ and the $\bt_{i,j}$ the \bfem{coefficients} of
the quadratic pair $(q,b) $ with respect to the base
$\veps_1,\dots,\veps_n.$\end{defn}

\begin{prop}\label{prop:I.1.9}
The quadratic pair $(q,b)$ is balanced iff $\bt_{i,i}=2\al_i$ $(1\le
i\le n).$\end{prop}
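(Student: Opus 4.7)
The plan is straightforward: use the explicit formulas \eqref{eq:I.1.15} and \eqref{eq:I.1.16} to reduce the functional identity $b(x,x)=2q(x)$ to a statement about the diagonal coefficients.

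For the forward direction, I would simply substitute $x = \varepsilon_i$ into the balance condition $b(x,x) = 2q(x)$. By definition of the coefficients this immediately yields $\beta_{i,i} = 2\alpha_i$.

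For the reverse direction, I would compute $b(x,x)$ using \eqref{eq:I.1.16} together with the symmetry $\beta_{i,j} = \beta_{j,i}$, splitting the double sum into its diagonal and off-diagonal parts:
\begin{equation*}
b(x,x) \;=\; \sum_{i=1}^n \beta_{i,i} x_i^2 \;+\; 2\sum_{i<j} \beta_{i,j} x_i x_j.
\end{equation*}
Doubling \eqref{eq:I.1.15} gives
\begin{equation*}
2q(x) \;=\; 2\sum_{i=1}^n \alpha_i x_i^2 \;+\; 2\sum_{i<j} \beta_{i,j} x_i x_j.
\end{equation*}
Subtracting the common off-diagonal term (this is fine even without negation, since both expressions contain the \emph{same} summand $2\sum_{i<j}\beta_{i,j}x_ix_j$), the hypothesis $\beta_{i,i} = 2\alpha_i$ makes the diagonal parts agree term-by-term, so $b(x,x) = 2q(x)$ holds for every $x \in V$.

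No real obstacle arises here, since $V$ is assumed free with a given base and the coefficients are defined precisely so that \eqref{eq:I.1.15} and \eqref{eq:I.1.16} hold; the only minor care is that we are not entitled to cancel in the semiring, but we never need to --- the identity is established by directly matching the two expansions coefficient by coefficient rather than by moving summands across an equation.
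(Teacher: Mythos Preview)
Your proof is correct and follows essentially the same route as the paper's: both directions are handled identically, with the reverse direction computing $b(x,x)$ by splitting into diagonal and off-diagonal parts, substituting $\beta_{i,i}=2\alpha_i$, and recognizing the result as $2q(x)$. Your remark about not needing cancellation is apt, though the word ``subtracting'' in the middle of the argument is best avoided --- the paper simply writes the chain of equalities directly and factors out $2$ at the end.
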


\begin{proof} The equations $\bt_{i,i}=2\al_i$ are special cases of the
balancing rule \eqref{eq:I.1.4}. Assume that these equations hold.
Then indeed, for any $x=\sum_i x_i\veps_i,$
\begin{align*}
b(x,x)&=\sum_{i,j} \bt_{i,j}x_ix_j \\ & =\sum_i \bt_{i,i}
x_i^2+2\sum_{i<j}
\bt_{i,j}x_ix_j\\
&=2\bigg[\sum_i \al_ix_i^2+\sum_{i<j}\bt_{i,j}x_ix_j\bigg]
=2q(x).\end{align*}\end{proof}

If $R$ is not a ring, then   different polynomials in $x_1,\dots,
x_n$ often give the same function on $V.$ In order not to complicate
our setting too much at present, we now redefine (in the case of a
free module $V)$ a quadratic form as a polynomial on $V,$ a notion
which with sufficient care can be regarded as independent of the
choice of the base $\veps_1,\dots,\veps_n.$ Accordingly, we also
view a bilinear form on $V$ as a polynomial in variables $x_1,\dots,
x_n,$ $y_1,\dots,y_n.$

Whenever necessary, we specify the quadratic forms and pairs in
this polynomial sense \bfem{formal quadratic forms} and pairs,
while for the forms and pairs in Definition \ref{defn:I.1.1} we use
the term ``\bfem{functional}''. The dependence of a formal pair
$(q,b)$ on a given base of the free $R$-module is not a
conceptual problem, since it is obvious how to rewrite $q$ and $b$
with respect to another base.

We now work with formal quadratic forms and pairs, keeping fixed
the base $\veps_1,\dots,\veps_n$ of~ $V$. In the following we
often denote a bilinear form $B$ on $V$ by its ``Gram-matrix"
\begin{equation}\label{eq:I.1.19}
B=\begin{pmatrix} \bt_{1,1} & \dots & \bt_{1,n}\\
\vdots & \ddots & \vdots \\   \bt_{n,1}& \dots &
\bt_{n,n}\end{pmatrix},
\end{equation}
$\bt_{i,j} :=B(\veps_i,\veps_j),$ and we denote by $\trn{B}$ the
bilinear form given by the transpose of the matrix~ $B$. On the
functional level $\trn{B}(x,y) = B(y,x).$

\begin{defn}\label{defn:I.1.10} We call a bilinear form $B$ on $V$
\bfem{triangular}, if all coefficients $\bt_{i,j}$ with $i>j$ are
zero; hence
\begin{equation}\label{eq:I.1.20}
B=\begin{pmatrix} \bt_{1,1} & \bt_{1,2}& \dots & \bt_{1,n}\\
& \bt_{2,2} & & \vdots\\
  & &\ddots & \bt_{n-1,n}\\
0& & & \bt_{n,n}\end{pmatrix} . \end{equation}\end{defn}

This property strongly depends on the choice of the base
$\veps_1,\dots,\veps_n$ of $V.$

\begin{defn}\label{defn:I.1.11} Given a quadratic form $q$ and a
bilinear form $B$ on $V$, we say that $q$ \bfem{admits}~$B,$ or that
$B$ \bfem{expands} $q$, if $B(x,x)=q(x)$ (cf. Example
\ref{examp:I.1.4}). In explicit terms this means that, if
$B=(\bt_{i,j}),$ then
\begin{equation}\label{eq:I.1.21} q(x)=\sum_{i,j=1}^n
\bt_{i,j}x_ix_j.\end{equation}
\end{defn}

The following is now obvious.

\begin{prop}\label{prop:I.1.12}
 A given quadratic form $q$ on $V$ with coefficients $\al_i$ $(1\le i\le
n),$ and $\bt_{i,j}$ $(1\le i,j\le n),$ (cf. Definition
\ref{defn:I.1.8}) expands to a unique triangular bilinear form,
denoted by~$\triangledown q,$ namely,
 \begin{equation}\label{eq:I.1.22}
 \triangledown q=\begin{pmatrix}  \al_{1} & \bt_{1,2}& \dots & \bt_{1,n}\\
& \al_{2} & &  \vdots \\
  & &  \ddots & \bt_{n-1,n}\\
0 &  & & \al_{n}\end{pmatrix}. \end{equation} The symmetric bilinear
form
\begin{equation}\label{eq:I.1.23}
  b_q =\begin{pmatrix}  2\al_{1} & \bt_{1,2}& \dots & \bt_{1,n}\\
\bt_{1,2} & 2\al_{2} & &   \vdots \\
  \vdots & &  \ddots &   \bt_{n-1,n}   \\
\bt_{1,n} &  \cdots & \bt_{n,n-1}& 2\al_{n}\end{pmatrix}\end{equation}
is the unique one which completes the formal quadratic form $q$ to
a balanced quadratic pair~ $(q,b_q)$.
\end{prop}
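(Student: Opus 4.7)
Since we are in the formal setting just introduced (bilinear and quadratic forms are polynomials in $x_1,\ldots,x_n$), both assertions reduce to matching coefficients, and I handle them in parallel. For the \emph{triangular expansion}, any triangular bilinear form $B=(b_{i,j})$ has $B(x,x)=\sum_i b_{i,i}x_i^2 + \sum_{i<j} b_{i,j} x_i x_j$ as a formal polynomial. Comparing this with the expression \eqref{eq:I.1.15} for $q(x)$ forces $b_{i,i}=\alpha_i$ and $b_{i,j}=\beta_{i,j}$ for $i<j$, which is exactly the matrix $\triangledown q$ displayed in the proposition, yielding both existence and uniqueness at a single stroke.

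For the symmetric balanced completion, existence follows from Example~\ref{examp:I.1.4} applied to $B:=\triangledown q$: the symmetric bilinear form $\triangledown q + \trn{(\triangledown q)}$, paired with the quadratic form $x\mapsto \triangledown q(x,x)=q(x)$, constitutes a balanced quadratic pair, and inspection of its Gram matrix recovers precisely $b_q$. For uniqueness, suppose $b'=(\beta'_{i,j})$ is symmetric and $(q,b')$ is balanced. The balancing rule \eqref{eq:I.1.4} at $x=\veps_i$ forces $\beta'_{i,i}=2\alpha_i$, matching the diagonal of $b_q$. For $i<j$, applying \eqref{eq:I.1.2} to $x=\veps_i$, $y=\veps_j$ gives $q(\veps_i+\veps_j)=\alpha_i+\alpha_j+\beta'_{i,j}$, while direct substitution into the polynomial \eqref{eq:I.1.15} yields $\alpha_i+\alpha_j+\beta_{i,j}$; hence $\beta'_{i,j}=\beta_{i,j}$, and by symmetry of $b'$ we also have $\beta'_{j,i}=\beta_{i,j}$. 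Thus $b'=b_q$.

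The main conceptual point — rather than any real obstacle — is the formal-versus-functional distinction flagged just before the proposition: everything is to be read as equality of polynomials, so the proof reduces to ordinary distributivity and coefficient matching, and no step presents genuine difficulty. It is worth noting that this argument works verbatim over an arbitrary semiring $R$, exploiting no feature of the supertropical setting; the subtleties of non-uniqueness of companions (and the quasilinear-rigid decomposition) will only appear when one drops the triangularity or balance hypothesis.
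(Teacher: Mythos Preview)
Your overall strategy matches the paper's intent—the paper in fact gives no proof at all, simply prefacing the proposition with ``The following is now obvious,'' so the intended argument is exactly the coefficient-matching you announce in your plan. The triangular-expansion part and the existence of $b_q$ via Example~\ref{examp:I.1.4} are fine.

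There is, however, a genuine slip in your uniqueness argument for the off-diagonal entries of $b_q$. You evaluate \eqref{eq:I.1.2} at $x=\veps_i$, $y=\veps_j$ to get
\[
\alpha_i+\alpha_j+\beta'_{i,j}=q(\veps_i+\veps_j)=\alpha_i+\alpha_j+\beta_{i,j},
\]
and then conclude $\beta'_{i,j}=\beta_{i,j}$. That inference requires additive cancellation, which is unavailable over a general semiring (and certainly fails in the supertropical case). Evaluating at base vectors collapses the polynomial identity to an equation in $R$, and you lose exactly the information you need.

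The fix is the one you yourself advertise in the first paragraph but then abandon: stay at the level of the polynomial identity \eqref{eq:I.1.2} in $R[x_1,\dots,x_n,y_1,\dots,y_n]$. Expanding $q(x+y)$ via \eqref{eq:I.1.15} shows that the coefficient of the monomial $x_iy_j$ (for $i<j$) on the left is $\beta_{i,j}$, while on the right it is $\beta'_{i,j}$; since monomials form a free $R$-basis of the polynomial semiring, this forces $\beta'_{i,j}=\beta_{i,j}$ with no cancellation needed. (The same expansion gives $\beta'_{i,i}=2\alpha_i$ directly, so in the formal setting the companion is in fact unique outright and automatically balanced—the word ``balanced'' in the statement is redundant here.)
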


\begin{notations} \label{notat:I.1.13} $ $
 We denote the quadratic form
$q$ with coefficients $\al_i$ $(1\le i\le n)$ and $\bt_{i,j}$ $(1\le
i,j\le n)$ by the triangular scheme\footnote{Such
triangular schemes have already been used in the literature in the
case that $R$ is a ring, cf., e.g., \cite[I,\S2]{Kneser}.}
\begin{equation}\label{eq:I.1.24}
q=\begin{bmatrix} \al_1& \bt_{1,2} &\dots& \bt_{1,n}\\
 &\al_2 & \ddots & \vdots  \\
 &   & \ddots & \bt_{n-1, n} \\
 & &  &   \al_n\end{bmatrix},\end{equation}
showing all coefficients of the homogeneous polynomial $q(x)$. It
is the matrix of $\triangledown q$ in square brackets.
We also say that \eqref{eq:I.1.24} (or \eqref{eq:I.1.15}) is a presentation of the functional quadratic form $q$.

 \end{notations}

 The notation of \eqref{eq:I.1.24} describes a
(formal) quadratic form $q$ on $V$ by parameters $\al_i$, $\bt_{i,j}$,
which are uniquely determined by $q$ and the given base of $V.$
%
%

We return to functional quadratic forms and pairs.
\begin{defn}\label{defn:I.1.15} \  Let $q$ be a functional quadratic form
on the $R$-module $V.$ We call a symmetric bilinear form $b$ on
$V$ a \textbf{companion} of $q$ (or say that $b$
\textbf{accompanies} $q$), if~ $b$ completes~ $q$ to a functional
quadratic pair $(q,b)$. When the pair $(q,b)$ is balanced, we call
$b$ a \textbf{balanced companion} of $q.$
\end{defn}

As a consequence of Proposition \ref{prop:I.1.12} we state

\begin{cor}\label{cor:I.1.16} Every functional quadratic form $q$ on
a free $R$-module has a balanced companion $b$ (but perhaps not
unique, cf.~\secref{sec:I.2}).
\end{cor}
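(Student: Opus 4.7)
The plan is to reduce the corollary to Proposition~\ref{prop:I.1.12} by passing from the functional quadratic form $q$ to a formal presentation relative to a chosen base, and then reading the proposition back at the level of functions.

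First I would fix a base $\veps_1,\dots,\veps_n$ of $V$ and pick any companion $b'$ of $q$, which exists by the very definition of a functional quadratic form. Setting $\al_i:=q(\veps_i)$ and $\bt_{i,j}:=b'(\veps_i,\veps_j)$, the iteration argument leading to \eqref{eq:I.1.15} gives
\[
q(x) \; = \; \sum_{i=1}^n \al_i x_i^2 + \sum_{i<j} \bt_{i,j}\, x_i x_j
\]
for every $x=\sum x_i\veps_i\in V$. Thus $q$ is represented by a formal quadratic form (in the sense of Notation~\ref{notat:I.1.13}) with coefficients $\al_i,\bt_{i,j}$.

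Next I would apply Proposition~\ref{prop:I.1.12} to this formal presentation to obtain the symmetric formal bilinear form $b_q$ as in \eqref{eq:I.1.23}, that is, the matrix with diagonal entries $2\al_i$ and off-diagonal entries $\bt_{i,j}=\bt_{j,i}$. Interpreted as a function on $V\times V$,
\[
b_q(x,y) \; = \; \sum_i 2\al_i x_i y_i + \sum_{i<j} \bt_{i,j}(x_i y_j + x_j y_i).
\]
It remains to verify functionally the two identities $q(x+y)=q(x)+q(y)+b_q(x,y)$ and $b_q(x,x)=2q(x)$. Both are immediate direct calculations using only distributivity and the identity $(x_i+y_i)^2=x_i^2+2x_iy_i+y_i^2$, which holds in any commutative semiring; no negation is needed. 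This produces a balanced companion $b_q$ of $q$.

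No real obstacle arises; the only point requiring attention is the passage between the formal and functional viewpoints. Concretely, one must be careful that the coefficients $\al_i,\bt_{i,j}$ used in building $b_q$ genuinely yield a polynomial representation of the function $q$, and for this the prior choice of \emph{some} companion $b'$ is essential, since over a semiring $q$ alone need not determine the coefficients $\bt_{i,j}$. The remark that balanced companions need not be unique (announced in the statement and to be taken up in \S\ref{sec:I.2}) reflects precisely the freedom in this initial choice of $b'$.
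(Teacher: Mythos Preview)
Your proof is correct and follows essentially the same route as the paper, which simply records the corollary as a consequence of Proposition~\ref{prop:I.1.12}. You have spelled out in detail the passage from the functional form $q$ to a formal representative (via an initial companion $b'$) and back, which is exactly the step the paper leaves implicit.
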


Our main interest in this paper is in functional quadratic forms and
pairs, while formal forms and pairs will usually serve to denote
functional forms and pairs in a precise and efficient way (in the
case that the $R$-module $V$ is free). Most often it should be clear
from the context whether the forms and the pairs at hand are formal
or functional.

We fix a companion $b_0:V\times V\to R$ of a functional quadratic
form $q$ (which exists by Definition \ref{defn:I.1.1}), and now
search for other companions of $q$ in a systematic way. Here the
notion of a ``\textit{partial companion}" of~$q$ will turn out to be
helpful.

\begin{defn}\label{defn:I.2.1}
Let $b: V\times V\to R$ be a symmetric bilinear form. We say that
$b$ is a \bfem{companion of} $q$ \bfem{at} a point $(x,y)\in V\times
V$ (or, that $b$ \bfem{accompanies} $q$ \bfem{at} $(x,y)$), if
\begin{equation}\label{eq:I.2.1}
q(x+y)=q(x)+q(y)+b(x,y),\end{equation} which can be rephrased as
\begin{equation}\label{eq:I.2.2}
q(x)+q(y)+b(x,y)=q(x)+q(y)+b_0(x,y).\end{equation} If this happens
to be true for every point $(x,y)$ of a set $T\subset V\times V,$ we
call $b$ a \bfem{companion of} $q$ \bfem{on} $T;$ and in the subcase
$T=S\times S,$ where $S\subset V,$ we say more briefly that $b$ is a
\bfem{companion of} $q$ \bfem{on} $S.$ We then also say that $b$
\bfem{accompanies} $q$ on $T,$ resp. on $S.$\end{defn}

\begin{remark}\label{rem:I.2.2}
The  bilinear form $b$ accompanies $q$ on an $R$-submodule $W$ of
$V$ iff $b|W\times W$ is a companion of $q|W$.  \end{remark}

\begin{examp}\label{examp:I.2.4}
Under   condition (ii) of Remark~\ref{assump}, let $x_0\in V.$ A
symmetric bilinear form~ $b$ on~ $V$ accompanies $q:V\to R$ on
$Rx_0$ iff for all $\lm,\mu\in R\sm \{0\}:$
\begin{equation}\label{eq:I.2.3}
(\lm^2+\mu^2)q(x_0)=(\lm^2+\mu^2)q(x_0)+\lm\mu
b(x_0,x_0).\end{equation}
\end{examp}

\begin{examp}\label{examp:I.2.5}
Let $x_0,y_0\in V.$ A symmetric bilinear form $b$ accompanies $q:
V\to R$ on $Rx_0\times Ry_0$ iff for all $\lm,\mu\in R\sm\{0\}$:
\begin{equation}\label{eq:I.2.4} \lm^2q(x_0)+\mu^2q(y_0)+\lm \mu
b_0(x_0,y_0)=\lm^2q(x_0)+\mu^2q(y_0)+\lm\mu
b(x_0,y_0).\end{equation} (Here $R$ may be any semiring.)
\end{examp}

In the following, $q$ always denotes a quadratic form on an
$R$-module $V$ and $b$ a symmetric bilinear form on $V.$ Given a set
$T\subset V\times V$ on which $b$ accompanies $q$, we look for ways
to obtain a bigger set $T'\supset T$ on which $b$ accompanies $q.$
As above $b_0$ denotes a fixed companion of $q$.

\begin{lemma}\label{lem:I.2.6}
If $b$ accompanies $q$ at the points $(x,y),(x',y),$ and $(x,x'),$
then $b$ accompanies ~$q$ at the point $(x+x',y).$\end{lemma}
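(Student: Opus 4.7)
The plan is to compute $q(x+x'+y)$ by breaking it apart with the fixed global companion $b_0$ and then swapping each $b_0$-value that appears for the corresponding $b$-value, using the three hypotheses. Since $R$ admits no subtraction, every swap must be performed inside a longer sum via the form \eqref{eq:I.2.2} of accompaniment at a point, rather than as a direct identity $b(\cdot,\cdot)=b_0(\cdot,\cdot)$.

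First I would apply $b_0$ to the decomposition $q(x+x'+y)=q((x+y)+x')$ to obtain
\[
q(x+x'+y) \;=\; q(x+y) + q(x') + b_0(x,x') + b_0(x',y),
\]
using bilinearity and symmetry of $b_0$. Invoking the hypothesis at $(x,y)$, I replace $q(x+y)$ by $q(x)+q(y)+b(x,y)$, and then regroup so that the pattern $q(x')+q(y)+b_0(x',y)$ sits together; the hypothesis at $(x',y)$ in the form \eqref{eq:I.2.2} exchanges $b_0(x',y)$ for $b(x',y)$ inside this triple. A further regrouping isolates $q(x)+q(x')+b_0(x,x')$, and the hypothesis at $(x,x')$ turns this into $q(x)+q(x')+b(x,x')$. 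The resulting identity is
\[
q(x+x'+y) \;=\; q(x)+q(x')+q(y) + b(x,x') + b(x,y) + b(x',y).
\]

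To close, I would note that by bilinearity $b(x+x',y)=b(x,y)+b(x',y)$, and by the hypothesis at $(x,x')$ one has $q(x+x')=q(x)+q(x')+b(x,x')$; hence $q(x+x')+q(y)+b(x+x',y)$ collapses to the very same sum displayed above. This is exactly the assertion that $b$ accompanies $q$ at $(x+x',y)$. The only subtlety is the semiring bookkeeping: commutativity and associativity of addition must be used to bring triples of the form $q(u)+q(v)+b_0(u,v)$ next to each other before invoking \eqref{eq:I.2.2}, since one never has access to the raw identity $b(u,v)=b_0(u,v)$.
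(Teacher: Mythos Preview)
Your proof is correct and follows essentially the same strategy as the paper: expand to the fully symmetric sum $q(x)+q(x')+q(y)+b(x,x')+b(x,y)+b(x',y)$ and use the three hypotheses, in the form \eqref{eq:I.2.2}, to swap $b$ and $b_0$ inside triples $q(u)+q(v)+b(u,v)$. The only cosmetic difference is that you verify the accompaniment condition \eqref{eq:I.2.1} directly (starting from $q(x+x'+y)$), whereas the paper verifies the equivalent form \eqref{eq:I.2.2} (starting from $q(x+x')+q(y)+b(x+x',y)$ and ending at the same expression with $b_0$); the manipulations in between are identical.
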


\begin{proof}
We will use the equations \begin{align}  &
 q(x)+q(y)+b(x,y)=q(x)+q(y)+b_0(x,y), \tag{$\al$}\\
&  q(x')+q(y)+b(x',y)=q(x')+q(y)+b_0(x',y),\tag{$\bt$}\\
& q(x)+q(x')+b(x,x')=q(x)+q(x')+b_0(x,x'),\tag{$\gm$}\\
\intertext{indicated by \eqref{eq:I.2.2}, and furthermore}
 &q(x+x') =q(x)+q(x')+b(x,x'),\tag{$\delta$}\\
& q(x+x') =q(x)+q(x')+b_0(x,x')\tag{$\veps$}.\end{align}  Applying
$(\delta)$ we obtain
$$q(x+x')+q(y)+b(x+x',y)=q(x)+q(x')+q(y)+b(x,x')+b(x,y)+b(x',y).$$
Applying $(\al)$, $(\bt)$, and $(\gm)$ to the right hand side, we
replace $b$ by $b_0$ throughout. Finally, applying $(\veps)$ we
obtain
$$q(x+x')+q(y)+b(x+x',y)=q(x+x')+q(y)+b_0(x+x',y),$$
as desired.
\end{proof}

\begin{lemma}\label{lem:I.2.7}
Assume that $b$ accompanies $q$ on a set $S\subset V.$ Then $b$
accompanies $q$ on the submonoid $\langle S\rangle$ of $(V,+)$
generated by $S.$
\end{lemma}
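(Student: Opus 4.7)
The plan is to apply Lemma~\ref{lem:I.2.6} inductively to build $\langle S\rangle$ out of $S$, exploiting the fact that the accompaniment relation is symmetric in its two arguments: since $b$ is symmetric and the defining equation \eqref{eq:I.2.1} is symmetric in $x$ and $y$, $b$ accompanies $q$ at $(x,y)$ iff at $(y,x)$. As a preliminary observation, $b$ trivially accompanies $q$ at any pair involving $0\in V$, because $q(0)=0$ (take $a=0$ in \eqref{eq:I.1.1}) and a bilinear form vanishes whenever one of its arguments is $0$; so the monoid identity of $\langle S\rangle$ causes no trouble.

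First I would fix $y\in S$ and prove by induction on $k\ge 1$ that $b$ accompanies $q$ at $(s_1+\cdots+s_k,\, y)$ for every $s_1,\dots,s_k\in S$. The case $k=1$ is by hypothesis. For the inductive step, set $z:=s_1+\cdots+s_k$; the inductive hypothesis, applied once with the original $y$ and once with $s_{k+1}\in S$ in place of $y$, supplies accompaniment at $(z,y)$ and at $(z,s_{k+1})$, while accompaniment at $(s_{k+1},y)$ is given since both lie in $S$. Lemma~\ref{lem:I.2.6} with $x:=z$ and $x':=s_{k+1}$ then yields accompaniment at $(z+s_{k+1},y)$.

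This first induction delivers accompaniment at every pair $(z,y)$ with $z\in\langle S\rangle$ and $y\in S$, hence also at $(y,z)$ by symmetry. I would then fix an arbitrary $z\in\langle S\rangle$ and run the mirror-image induction on the length of a sum $y=t_1+\cdots+t_m\in\langle S\rangle$; its inductive step again reduces to Lemma~\ref{lem:I.2.6}, applied after swapping the two arguments via symmetry. The outcome is accompaniment at $(z,y)$ for all $z,y\in\langle S\rangle$, which is exactly what is claimed.

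There is no genuine obstacle here: Lemma~\ref{lem:I.2.6} is tailor-made to perform the inductive step. The only mild care needed is to set up the first induction so that its hypothesis simultaneously covers accompaniment at $(z,y)$ and at $(z,s_{k+1})$ — which it does, since it quantifies over all $y\in S$.
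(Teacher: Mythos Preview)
Your proposal is correct and follows essentially the same route as the paper's proof: both use Lemma~\ref{lem:I.2.6} in a two-stage induction, first extending accompaniment from $S\times S$ to $\langle S\rangle\times S$, and then from $\langle S\rangle\times S$ to $\langle S\rangle\times\langle S\rangle$. Your explicit bookkeeping of the symmetry of accompaniment and of the universal quantification over $y\in S$ in the first induction is exactly what the paper's more compressed argument relies on implicitly.
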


\begin{proof}
We have
$$\langle S\rangle=\{x_1+\dots+x_r \ds | r\in\N,\, x_i\in
S\}\cup\{0\}.$$ It is trivial that~ $b$ accompanies~ $q$ on $V\times
\{0\}.$ We prove by induction on $r$ that~ $b$ accompanies~ $q$ at
every point $(x_1+\dots+x_r,y)$ with $x_i, y\in S.$ This is true by
assumption for $r=1.$ Assuming the claim for sums $x_1+\dots +x_s$
with $s<r,$ we see that $b$ accompanies $q$ at $(x_1+\dots
+x_{r-1},x_r).$ We conclude by \lemref{lem:I.2.6} that $b$
accompanies $q$ at $(x_1+\dots +x_r,y).$ Thus $b$ accompanies $q$ on
$\langle S\rangle\times S.$ By an analogous induction we conclude
that $b$ accompanies $q$ on $\langle S\rangle\times\langle
S\rangle.$
\end{proof}

\begin{prop}\label{prop:I.2.8}
Assume that $(\veps_i\ds|i\in I)$ is a subset of $V$ which generates
$V$ as an $R$-module, and assume that $b$ accompanies $q$ on the set
$S=\bigcup_{i\in I}R\veps_i.$ Then $b$ is a companion of
$q.$\end{prop}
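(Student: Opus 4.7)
The plan is to reduce the statement to the submonoid version already proved in \lemref{lem:I.2.7}. The key observation is that the set $S = \bigcup_{i \in I} R\veps_i$ generates $V$ as an additive monoid, precisely because $(\veps_i \ds| i \in I)$ generates $V$ as an $R$-module. Indeed, any $x \in V$ can be written as a finite sum $x = \sum_{k} a_k \veps_{i_k}$ with $a_k \in R$, and each summand $a_k \veps_{i_k}$ lies in $R\veps_{i_k} \subset S$. Hence $V \subseteq \langle S \rangle$, and the reverse inclusion is trivial, so $\langle S \rangle = V$.

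First I would verify the hypothesis of \lemref{lem:I.2.7}: by assumption, $b$ accompanies $q$ on $S$, meaning that $b$ accompanies $q$ at every point $(x,y) \in S \times S$. Applying \lemref{lem:I.2.7}, $b$ accompanies $q$ on $\langle S \rangle = V$, i.e., at every point of $V \times V$. This is precisely the statement that $b$ is a (global) companion of $q$ on $V$, as in Definition~\ref{defn:I.1.15}.

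There is no real obstacle here; all the work has been performed in \lemref{lem:I.2.6} and \lemref{lem:I.2.7}. The only minor subtlety worth spelling out is that one must first know $b$ accompanies $q$ on $R\veps_i$ for each single $i$ (given by Example~\ref{examp:I.2.4}-type verification, which is subsumed in the hypothesis), and then that it accompanies $q$ across $R\veps_i \times R\veps_j$ for distinct $i,j$; but both facts are packaged into the assumption that $b$ accompanies $q$ on all of $S$. Thus the proposition is an immediate corollary of \lemref{lem:I.2.7}.
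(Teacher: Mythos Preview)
Your proof is correct and follows exactly the same approach as the paper: observe that $\langle S\rangle = V$ and apply \lemref{lem:I.2.7}. The paper's own proof is simply the two-line version of what you wrote.
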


\begin{proof}
Now $\langle S\rangle=V.$ \lemref{lem:I.2.7} applies.\end{proof}

\section{Quasilinear quadratic forms}\label{quasilin}

Companionship of the null bilinear form deserves special attention.
This is reflected in the following terminology.

\begin{defn}\label{defn:I.1.5}
A  quadratic form $q: V\to R$ is called
\bfem{quasilinear}\footnote{In the case that $R$ is a valuation
domain, cf. \cite[I,\S6]{Spez}. In \cite{EKM} these forms are called
``totally singular".} if $q$, together with the null form $b: V
\times V\to R,$ $b(x,y) =0$ for all $x,y\in V, $ is a quadratic
pair. This means that for any $x,y\in V,$ $a\in  R$,
\begin{equation}\label{eq:I.1.9} q(ax)=a^2q(x),\qquad q(x+y)=q(x)+q(y).\end{equation}
\end{defn}

Clearly a form $q$ on $V$ is quasilinear iff
\begin{equation}\label{eq:I.5.6}
q\Big(\sum_i a_i\veps_i\Big)=\sum_i a_i^2 q(\veps_i).
\end{equation}

\begin{example}
 If in Example~\ref{examp:I.1.6}  $2=0$ in $R,$ then the form $q$
is quasilinear.\end{example}

Quasilinear forms over a supertropical semifield have been
considered in \cite[\S5]{IKR-LinAlg2}. It is useful to refine the
definition.

\begin{defn}\label{defn:I.2.3}
A quadratic form $q$ is \bfem{quasilinear on} a set $T\subset
V\times V$, if $b=0$ accompanies~ $q$ on~ $T,$ i.e., for any
$(x,y)\in T$,
$$q(x+y)=q(x)+q(y);$$
and we say that $q$ is quasilinear on $S\subset V$ if this happens
on $T=S\times S.$\end{defn}

\begin{examp}\label{examp:I.2.4a}
If $4 \cdot 1_R=2 \cdot 1_R,$ for example, if $R$ is supertropical,
then every quadratic form on $V$ is quasilinear on the diagonal of
$V\times V.$\end{examp}

If $q$ is quasilinear and $V$ is free with base $(\veps_i \ds |i\in I)$, then Equation~\eqref{eq:I.1.15} becomes:
\begin{equation}\label{eq:I.1.18} q(x)=\sum_{i=1}^n \al_ix_i^2 ,\end{equation}

or, as a triangular scheme (Notation~\ref{notat:I.1.13}):
$$q=\begin{bmatrix} \al_1 & 0 &\dots &
0\\
& \al_2 & \ddots & \vdots \\
& & \ddots & 0\\
& & & \al_n\end{bmatrix} . $$ For this form, we usually switch to
simpler notation with one row:
\begin{equation}\label{eq:I.1.25} q=[\al_1,\al_2,\dots, \al_n].\end{equation}

\section{Rigidity}\label{sec:I.2}

In this section we address the question of uniqueness of a partial
companion of $q.$

\begin{defn}\label{defn:I.2.10}
We say that $q$ is \bfem{rigid at a point} $(x,y)$ of $V\times V,$
if $b_1(x,y)=b_2(x,y)$ for any two companions $b_1,b_2$ of $q$ at
$(x,y).$ We rephrase this as
\begin{equation}\label{eq:I.2.5}
b(x,y)=b_0(x,y)\end{equation} for any companion $b$ of $q$ at
$(x,y),$ where, as before, $b_0$ denotes a given companion of~ $q$
(on ~$V).$ If $q$ is rigid at every point $(x,y)$ of a set
$T\subset V\times V$, we say that $q$ is \bfem{rigid on} $T.$
Finally, if $q$ is rigid on $S\times S$ for some $S\subset V,$ we
more briefly say that $q$ is \bfem{rigid on} $S,$ and in the
subcase $S=V$ we say that $q$ is a \bfem{rigid quadratic form}.
\end{defn}

Later we will make use of the following easy fact.

\begin{prop}\label{prop:I.2.10}
Assume that $S_1,S_2$ are subsets of $V,$ and suppose that $q$ is
rigid on $S_1\times S_2.$ Let $W_i$ denote the $R$-submodule of
$V$ generated by $S_i$ $(i=1,2).$  Then $q$ is rigid on~ $W_1\times
W_2.$\end{prop}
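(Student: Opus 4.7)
The plan is to leverage the bilinearity of symmetric bilinear forms on $V$ to transport the pointwise rigidity assumed on $S_1\times S_2$ up to the generated submodules. First I would fix an arbitrary point $(x,y)\in W_1\times W_2$ and, using that $W_i$ is the $R$-submodule generated by $S_i$, write finite expansions
\[ x \;=\; \sum_{i=1}^m a_i\,s_i,\qquad y \;=\; \sum_{j=1}^n c_j\,t_j \]
with $s_i\in S_1$, $t_j\in S_2$, and $a_i,c_j\in R$.

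Next, let $b$ be any companion of $q$ at $(x,y)$; recall that $b$ is a symmetric bilinear form on all of $V$, as is the fixed reference companion $b_0$ of $q$. Expanding both sides in each slot by bilinearity,
\[ b(x,y) \;=\; \sum_{i,j} a_i c_j\,b(s_i,t_j),\qquad b_0(x,y) \;=\; \sum_{i,j} a_i c_j\,b_0(s_i,t_j). \]
Now I would invoke the rigidity of $q$ at every pair $(s_i,t_j)\in S_1\times S_2$: since $b$ and $b_0$ both qualify as companions of $q$ at these points, the hypothesis forces $b(s_i,t_j) = b_0(s_i,t_j)$ for each $(i,j)$. Substituting termwise, the two displayed sums coincide, so $b(x,y) = b_0(x,y)$, and as $b$ was arbitrary this is precisely rigidity of $q$ at $(x,y)$.

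The main obstacle is conceptual rather than computational: one must ensure that the bilinear form $b$, introduced as a companion only at $(x,y)$, is legitimately viewed as a companion at each generator pair $(s_i,t_j)$ so that the rigidity hypothesis applies. This is transparent once one recalls that a companion in the sense of Definition~\ref{defn:I.1.15} is a globally-defined symmetric bilinear form on $V\times V$, so that the equation $b(s_i,t_j) = b_0(s_i,t_j)$ at any rigid point is automatic. Crucially, no cancellation in $R$ is invoked anywhere; the argument is a pure bilinear distribution over finite sums, and therefore works over any semiring.
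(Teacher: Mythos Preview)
There is a genuine gap at the step where you invoke rigidity at each $(s_i,t_j)$ to conclude $b(s_i,t_j)=b_0(s_i,t_j)$. Rigidity at $(s_i,t_j)$ constrains only those symmetric bilinear forms that are \emph{companions of $q$ at $(s_i,t_j)$}, i.e., forms $b'$ for which $q(s_i+t_j)=q(s_i)+q(t_j)+b'(s_i,t_j)$ holds. Your $b$ was introduced merely as a companion of $q$ at the single point $(x,y)$; nothing you have assumed forces the companion equation to hold at $(s_i,t_j)$. Your proposed resolution conflates two different things: a companion at $(x,y)$ in the sense of Definition~\ref{defn:I.2.1} is indeed a bilinear form \emph{defined} on all of $V\times V$, but being globally defined is not the same as being a \emph{companion} at every point. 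Definition~\ref{defn:I.1.15}, which you cite, concerns the latter notion, and your $b$ is not assumed to satisfy it.

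The paper sidesteps this by a different quantification: instead of fixing $(x,y)\in W_1\times W_2$ and a companion at that one point, it starts with a symmetric bilinear form $b$ that is a companion of $q$ on all of $S_1\times S_2$. Then rigidity on $S_1\times S_2$ gives $b|_{S_1\times S_2}=b_0|_{S_1\times S_2}$ immediately, and the bilinear expansion you wrote finishes the job of showing $b|_{W_1\times W_2}=b_0|_{W_1\times W_2}$. If you want to salvage your pointwise approach, you would need to show that any companion at a single point $(x,y)\in W_1\times W_2$ can be replaced, without changing its value at $(x,y)$, by a companion on $S_1\times S_2$; that is not automatic over a general semiring.
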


\begin{proof} Let $b$ be a companion of $q$ on $S_1\times S_2.$
Then $b|S_1\times S_2=b_0|S_1\times S_2.$ From this one concludes
easily by bilinearity that $b|W_1\times W_2=b_0|W_1\times
W_2.$\end{proof}

\begin{cor}\label{cor:I.2.11} Assume that $(\veps_i \ds |i\in I)$ generates
 the $R$-module $V,$ and suppose that $q$ is
rigid at $(\veps_i,\veps_j)$ for any $i,j\in I.$ Then $q$ is a
rigid quadratic form.\end{cor}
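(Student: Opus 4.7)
The plan is to deduce this corollary as an immediate application of Proposition~\ref{prop:I.2.10}. First, I would set $S_1 := S_2 := \{\veps_i : i \in I\}$. The hypothesis that $q$ is rigid at $(\veps_i,\veps_j)$ for every $i,j \in I$ is precisely, by unwinding \defnref{defn:I.2.10}, the statement that $q$ is rigid on the set $S_1 \times S_2$.

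Next, I would identify the $R$-submodules generated by these sets: by assumption, $(\veps_i \ds | i \in I)$ generates $V$ as an $R$-module, so $W_1 = W_2 = V$. Applying Proposition~\ref{prop:I.2.10} now yields that $q$ is rigid on $W_1 \times W_2 = V \times V$, and this is exactly the definition of $q$ being a rigid quadratic form.

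There is no real obstacle here; the corollary is a direct specialization of the proposition, with the only content being the observation that the generating condition translates the rigidity on the generating family to rigidity on the whole module via the bilinearity argument already packaged into Proposition~\ref{prop:I.2.10}.
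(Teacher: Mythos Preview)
Your proof is correct and is exactly the argument the paper intends: the corollary is stated immediately after Proposition~\ref{prop:I.2.10} without a separate proof, precisely because it is the specialization $S_1 = S_2 = \{\veps_i \mid i \in I\}$ of that proposition. There is nothing to add.
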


Remarkably, the following rigidity result holds over any semiring
$R.$

\begin{prop}\label{prop:I.2.12}
Any quadratic form $q$ on an $R$-module $V$ is rigid on the
submodule $W$ of~$V$ generated by the set $\{x\in V \ds
|q(x)=0\}.$\end{prop}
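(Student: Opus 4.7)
The plan is to reduce this to a direct pointwise rigidity statement on the generating set $N:=\{x\in V\mid q(x)=0\}$, and then invoke \propref{prop:I.2.10} to lift rigidity from $N\times N$ to $W\times W$.

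First I would show that $q$ is rigid on $N\times N$. Let $x,y\in N$, so $q(x)=q(y)=0$. If $b$ is any companion of $q$ at $(x,y)$, the defining relation \eqref{eq:I.2.1} reads
\begin{equation*}
q(x+y)\;=\;q(x)+q(y)+b(x,y)\;=\;b(x,y),
\end{equation*}
so $b(x,y)=q(x+y)$. The right hand side depends only on $q$, not on the choice of $b$. Hence for any fixed companion $b_0$ of $q$ on $V$ we also have $b_0(x,y)=q(x+y)$, which gives $b(x,y)=b_0(x,y)$. By \defnref{defn:I.2.10} this means $q$ is rigid at every point of $N\times N$, i.e.\ rigid on $N\times N$.

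Next, since $W$ is by definition the $R$-submodule of $V$ generated by $N$, \propref{prop:I.2.10} (with $S_1=S_2=N$, so $W_1=W_2=W$) immediately yields that $q$ is rigid on $W\times W$, which is the claim. There is essentially no obstacle here: the only substantive content is the observation that when $q$ vanishes on two vectors the companionship formula has no ``slack'' left for the bilinear term, which is pinned down to $q(x+y)$; the extension to the full submodule is a formal consequence of bilinearity already packaged in \propref{prop:I.2.10}. Note that no assumption on $R$ (beyond being a semiring) is used, in accordance with the statement.
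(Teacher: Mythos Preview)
Your proof is correct and follows essentially the same route as the paper: first observe that on the zero set $N$ the companion value is forced to equal $q(x+y)$, establishing rigidity on $N\times N$, and then invoke \propref{prop:I.2.10} to pass to the generated submodule $W$. The paper's argument is the same, only more tersely phrased.
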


\begin{proof} Let $S:=\{x\in V \ds |q(x)=0\}.$ Given two companions
$b_1$ and $b_2$ of $q$ on $S$, we have
$$b_1(x,y)=q(x+y)=b_2(x,y)$$
for every $(x,y)\in S\times S.$ Thus $q$ is rigid on $S.$ By
\propref{prop:I.2.10} it follows, that $q$ is rigid on
$W.$\end{proof}

Now we obtain an explicit criterion for a
 quadratic form on~$V$ to be rigid.

\begin{thm}\label{thm:I.2.13} Assume that $V$ is free with base
$(\veps_i \ds|i\in I)$, and that  $R$ satisfies the conditions of
Remark~\ref{assump}.
 Then a quadratic form
$q$ on $V$ is rigid iff $q(\veps_i)=0$ for every $i\in I.$
\end{thm}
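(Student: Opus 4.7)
The plan is to prove the two directions separately. For the ``if'' direction, suppose $q(\veps_i) = 0$ for every $i \in I$; then every basis element lies in the set $S := \{x \in V \mid q(x) = 0\}$, so the $R$-submodule of $V$ generated by $S$ is all of $V$, and \propref{prop:I.2.12} immediately yields that $q$ is rigid. (Neither (i) nor (ii) is needed in this direction.)

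For the ``only if'' direction, assume $q$ is rigid and fix $k \in I$; the goal is to show $\al_k := q(\veps_k) = 0$. The key preparatory observation is that condition (ii) forces $4 = 2$ in $R$: indeed $(1+1)^2 = 1^2 + 1^2 = 2$ by (ii), whereas distributivity gives $(1+1)^2 = 4$, so $4a = 2a$ for every $a \in R$. Fixing a companion $b_0$ of $q$ and setting $\bt_{k,k} := b_0(\veps_k, \veps_k)$, relation \eqref{eq:I.1.3} at $x = \veps_k$ then reads $2\al_k = 2\al_k + \bt_{k,k}$. I now exhibit two symmetric bilinear forms on $V$ that both accompany $q$ at $(\veps_k, \veps_k)$: the zero form $b_1 \equiv 0$, and the form $b_2$ whose only nonzero value on basis pairs is $b_2(\veps_k, \veps_k) = 2\al_k$. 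Since $q(\veps_k + \veps_k) = q(2\veps_k) = 4\al_k = 2\al_k$, both $b_j$ satisfy the defining equation $q(\veps_k+\veps_k) = 2\al_k + b_j(\veps_k,\veps_k)$ at $(\veps_k, \veps_k)$. Rigidity at that point therefore forces $0 = b_1(\veps_k, \veps_k) = b_2(\veps_k, \veps_k) = 2\al_k$, and condition (i) finally delivers $\al_k = 0$.

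The subtlety to keep in mind, and what keeps the argument this short, is that rigidity in Definition~\ref{defn:I.2.10} is \emph{pointwise}: to contradict rigidity at $(\veps_k, \veps_k)$ one need only exhibit two symmetric bilinear forms on $V$ satisfying the single scalar equation at that one pair, not two global companions of $q$ on $V \times V$. This is what makes $b_2$ trivial to produce. The real content of the theorem is thus compressed into the two elementary facts $4 = 2$ (from (ii)) and $2a = 0 \Rightarrow a = 0$ (from (i)), combined with the pre-existing \propref{prop:I.2.12}.
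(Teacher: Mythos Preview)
Your proof is correct and follows essentially the same approach as the paper's. Both directions match: the ``if'' direction invokes \propref{prop:I.2.12} exactly as the paper does, and in the ``only if'' direction you exhibit the same two pointwise companions at $(\veps_k,\veps_k)$ with values $0$ and $2\al_k$, then use condition~(i); the paper uses the balanced companion $b_q$ in place of your ad hoc $b_2$, and cites Example~\ref{examp:I.2.4} instead of your explicit derivation of $4=2$ from (ii), but the content is identical. (The sentence involving $b_0$ and $\bt_{k,k}$ is unnecessary and can be dropped.)
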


\begin{proof} If all $q(\veps_i)=0,$ we conclude by
\propref{prop:I.2.12} that~$q$ is rigid. Now assume that~$q$ is
rigid, but that there exists some $k\in I$ with $q(\veps_k)\ne0.$
Then the balanced companion~$b_q$ described in \propref{prop:I.1.12}
(after choosing a formal representative of $q,$ again denoted by~
$q)$ has value $b_q(\veps_k,\veps_k)=2q(\veps_k)\ne0,$  by Condition
(i) of Remark~\ref{assump}. On the other hand, the null form $b=0$
accompanies~$q$ at $(\veps_k,\veps_k),$ as follows from Condition
(ii)   (cf.~Example~ \ref{examp:I.2.4}). Since $q$ is rigid at
$(\veps_k,\veps_k)$, and both $b_q$ and $b = 0$ accompany $q$ at
$(\veps_k,\veps_k)$, this is a contradiction.
\end{proof}

\section{The quasilinear-rigid decomposition}\label{sec:I.7}

We assume throughout this section that $V$ \textit{is a free module} with base $(\veps_i \ds |i\in I)$
over a semiring $R$ satisfying the conditions of
Remark~\ref{assump}.

\begin{prop}\label{prop:I.5.1}
Let $b$ be a companion of $q: V\to R$.   We define quadratic forms
$\ql$ and $\rg$ on $V$ by the formulas $(x=\sum_{i\in I}
x_i\veps_i\in V)$
\begin{equation}\label{eq:I.5.3}
\ql(x)=\sum_{i\in I}q(\veps_i)x_i^2\end{equation}
\begin{equation}\label{eq:I.5.4}
\rg(x)=\sum_{i<j}b(\veps_i,\veps_j)x_ix_j,\end{equation} where we
have chosen a total ordering on the set of indices $I.$ Then $\ql$
is quasilinear, $\rg$ is rigid, and $q=\ql+\rg.$
\end{prop}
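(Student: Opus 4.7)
The plan is to verify the three assertions by direct computation, relying on Remark \ref{assump}(ii) to control squares of sums and on Theorem \ref{thm:I.2.13} to conclude rigidity of $\rg$.

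First I would verify that $\ql$ is quasilinear. Homogeneity $\ql(ax)=a^2\ql(x)$ is immediate from formula \eqref{eq:I.5.3} and the identity $(ax_i)^2 = a^2 x_i^2$. For additivity, I invoke Remark \ref{assump}(ii), which gives $(x_i+y_i)^2 = x_i^2+y_i^2$ in $R$; summing against the scalars $q(\veps_i)$ yields
\[
\ql(x+y)=\sum_{i\in I} q(\veps_i)(x_i+y_i)^2=\sum_{i\in I} q(\veps_i)x_i^2+\sum_{i\in I} q(\veps_i)y_i^2=\ql(x)+\ql(y).
\]
Hence the null bilinear form accompanies $\ql$, so $\ql$ is a quadratic form in the sense of Definition \ref{defn:I.1.1} and is quasilinear by Definition \ref{defn:I.1.5}.

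Next, I would show $\rg$ is rigid. To see that $\rg$ is a quadratic form, I exhibit the triangular bilinear form $B$ with $B(\veps_i,\veps_j)=b(\veps_i,\veps_j)$ for $i<j$ and $B(\veps_i,\veps_j)=0$ otherwise; then $\rg(x)=B(x,x)$, and by Example \ref{examp:I.1.4} the symmetric bilinear form $\tilde b(x,y):=B(x,y)+B(y,x)$ is a companion of $\rg$. From the formula \eqref{eq:I.5.4}, evaluating at $\veps_k$ gives $\rg(\veps_k)=0$ for every $k\in I$, since all terms carry two distinct indices. Theorem \ref{thm:I.2.13} then applies and yields rigidity of $\rg$.

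Finally, I would establish $q=\ql+\rg$. Choosing the formal representative of $q$ with coefficients $\al_i=q(\veps_i)$, $\bt_{i,j}=b(\veps_i,\veps_j)$ from Definition \ref{defn:I.1.8}, formula \eqref{eq:I.1.15} reads
\[
q(x)=\sum_{i\in I}q(\veps_i)x_i^2+\sum_{i<j}b(\veps_i,\veps_j)x_ix_j=\ql(x)+\rg(x).
\]
The only minor subtlety is that \eqref{eq:I.1.15} was derived for a finite base, so I would remark that each $x\in V$ has finite support, whence the argument goes through term by term on the finitely many indices occurring in the presentation of $x$. Apart from that, the main step requiring care is confirming that the additivity in the proof of quasilinearity of $\ql$ genuinely depends on Remark \ref{assump}(ii); without that identity, the formula \eqref{eq:I.5.3} need not define a quasilinear form at all.
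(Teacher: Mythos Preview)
Your proof is correct and follows essentially the same approach as the paper's own proof: the paper also derives $q=\ql+\rg$ from \eqref{eq:I.1.15}, concludes rigidity of $\rg$ from $\rg(\veps_i)=0$ (implicitly via Theorem~\ref{thm:I.2.13}), and obtains quasilinearity of $\ql$ from Remark~\ref{assump}(ii). Your version is simply more detailed, spelling out homogeneity, an explicit companion for $\rg$, and the finite-support remark for infinite $I$.
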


\begin{proof}
It is obvious that $q=\ql +\rg$ (cf. \eqref{eq:I.1.15}).  Clearly
$\rg(\veps_i)=0$ for all $i\in I,$ whence $\rg$ is rigid. As a
consequence of Remark~\ref{assump}(ii), we have
$$\ql(x+y)=\ql(x)+\ql(y)\quad \text{for all}\quad x,y\in V,$$
whence $\ql$ is quasilinear.
\end{proof}

\begin{prop}\label{prop:I.5.2}
If $q=\ql+\rg$ with $\ql$ quasilinear and $\rg$ rigid, then $\ql$
satisfies the formula~ \eqref{eq:I.5.3} for all $x\in V,$ and hence
$\ql$ is uniquely determined by $q.$ In particular, $\ql$ does not
depend on the choice of the base $(\veps_i).$
\end{prop}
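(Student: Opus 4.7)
The plan is to show that the values $\ql(\veps_i)$ are forced to equal $q(\veps_i)$, and then use quasilinearity to extend this to all of $V$. First I would invoke Theorem \ref{thm:I.2.13}: since $\rg$ is rigid and $V$ is free on $(\veps_i\ds|i\in I)$, we have $\rg(\veps_i) = 0$ for every $i\in I$. Combining this with the decomposition $q = \ql + \rg$ evaluated at each base element yields $\ql(\veps_i) = q(\veps_i)$ for all $i\in I$.

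Next I would use the fact that $\ql$ is quasilinear. For $x = \sum_{i\in I} x_i\veps_i$ with only finitely many $x_i$ nonzero, iterating additivity of $\ql$ together with $\ql(a v) = a^2\ql(v)$ gives the formula \eqref{eq:I.5.6}, namely
\[
\ql(x) = \sum_{i\in I} x_i^2\,\ql(\veps_i).
\]
Substituting $\ql(\veps_i) = q(\veps_i)$ from the previous step produces formula \eqref{eq:I.5.3} exactly. Since the right-hand side is determined by $q$ alone (via its values on the base elements and the coordinates $x_i$ of $x$), the form $\ql$ is uniquely determined by $q$.

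Finally, to justify the last sentence (base-independence of $\ql$), I would argue as follows. If $(\veps_i')$ is another base of $V$, then the construction of \propref{prop:I.5.1} applied with respect to $(\veps_i')$ produces some quasilinear form $\ql'$ and some rigid form $\rg'$ with $q = \ql' + \rg'$. The uniqueness just established (applied with respect to the base $(\veps_i)$) shows $\ql' = \ql$, so $\ql$ depends only on $q$ and not on the base that was used to construct it.

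There is no real obstacle here; the only subtle point is to make sure the quasilinear property genuinely extends from two summands to arbitrary finite sums of base elements, which is just a trivial induction using $\ql(u+v) = \ql(u) + \ql(v)$ and $\ql(a\veps_i) = a^2\ql(\veps_i)$. The entire argument reduces to Theorem \ref{thm:I.2.13} plus the additivity defining quasilinearity.
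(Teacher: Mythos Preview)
Your proof is correct and follows essentially the same route as the paper's own argument: use rigidity (via \thmref{thm:I.2.13}) to get $\rg(\veps_i)=0$, hence $\ql(\veps_i)=q(\veps_i)$, and then invoke quasilinearity to recover formula~\eqref{eq:I.5.3}. Your treatment is more explicit about the citation to \thmref{thm:I.2.13} and about the base-independence step, but the logic is identical.
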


\begin{proof}
For every $i\in I$ we have $\rg(\veps_i)=0,$ and hence
$q(\veps_i)=\ql(\veps_i).$ Since $\ql$ is quasilinear, this implies
\eqref{eq:I.5.3}.
\end{proof}

\begin{defn}\label{defn:I.5.3}
We call $q_{\QL}+\rg=q$ (with $q_{\QL}$ quasilinear  and $\rg$
rigid) a \bfem{quasilinear-rigid decomposition} of $q$.  Furthermore
we call  $q_{\QL}$ the \bfem{quasilinear part} of $q.$  Furthermore
we call $\rg$  a \bfem{rigid complement of} $q_{\QL}$ \bfem{in} $q,$
or more briefly, a \bfem{rigid complement in} $q,$ justified by the
fact that $q_{\QL}$ is uniquely determined by $q.$ We denote the set
of all rigid complements in $q$ by $\Rig(q).$
\end{defn}

Rigid complements in $q$ are closely related to certain companions
of $q.$

\begin{defn}\label{defn:I.5.4}
We call a companion $b$ of $q$ \bfem{off-diagonal} if
$b(\veps_i,\veps_i)=0$ for every $i\in I.$\end{defn}

\begin{remark}\label{rem:I.5.5}
We have $0\in C_{ii}(q)$ for every $i\in I,$ since $q$ is
quasilinear on $R\veps_i$ due to Condition (ii) of
Remark~\ref{assump}. Thus every form $q$ on $V$ possesses
off-diagonal companions.
\end{remark}

\begin{prop}\label{prop:I.5.6}
The rigid complements $\rg$ in $q$ correspond bijectively with the
off-diagonal companions $b$ of $q$ by the formulas \eqref{eq:I.5.4}
and
\begin{equation}\label{eq:I.5.5}
b(\veps_i,\veps_j)=\rg(\veps_i+\veps_j)
\end{equation}
for $i,j\in I.$ The bilinear form $b$ is the (unique) companion of
$\rg.$
\end{prop}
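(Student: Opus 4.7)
My plan is to define two maps---one sending a rigid complement to its unique companion, the other sending an off-diagonal companion to a quadratic form via formula~\eqref{eq:I.5.4}---and verify that each is well-defined and that they are mutually inverse; formula~\eqref{eq:I.5.5} will fall out as a by-product. The crucial inputs throughout are \thmref{thm:I.2.13} (rigidity iff $\rg(\veps_i)=0$) together with the presentation formula~\eqref{eq:I.1.15}.

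Starting with $\rg\in\Rig(q)$, \thmref{thm:I.2.13} gives $\rg(\veps_i)=0$ for all $i\in I$, and since $\rg$ is rigid its companion $b$ is uniquely defined. Substituting $\al_i=0$ into identity~\eqref{eq:I.1.17} applied to the pair $(\rg,b)$ yields $b(\veps_i,\veps_i)=0$, so $b$ is off-diagonal. To see that $b$ is also a companion of $q$, I will expand
$$q(x+y)=\ql(x+y)+\rg(x+y)=\ql(x)+\ql(y)+\rg(x)+\rg(y)+b(x,y)=q(x)+q(y)+b(x,y),$$
using quasilinearity of $\ql$ and the fact that $b$ accompanies $\rg$. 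Applying~\eqref{eq:I.1.15} to the pair $(\rg,b)$ then kills all diagonal contributions and leaves exactly the right-hand side of~\eqref{eq:I.5.4}, so the map $\rg\mapsto b$ is inverted on the nose by~\eqref{eq:I.5.4}.

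Conversely, given an off-diagonal companion $b$ of $q$, define $\rg$ by~\eqref{eq:I.5.4}; since $\rg(\veps_i)=0$, \thmref{thm:I.2.13} shows $\rg$ is rigid. A direct expansion,
$$\rg(x+y)=\sum_{i<j}b(\veps_i,\veps_j)(x_i+y_i)(x_j+y_j)=\rg(x)+\rg(y)+\sum_{i<j}b(\veps_i,\veps_j)(x_iy_j+x_jy_i),$$
combined with the symmetry and off-diagonal vanishing of $b$, collapses the trailing sum to $\sum_{i,j}b(\veps_i,\veps_j)x_iy_j=b(x,y)$, so $b$ is the (necessarily unique) companion of $\rg$; in particular, the two maps compose to the identity in this direction too. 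Moreover, formula~\eqref{eq:I.1.15} applied to $q$ with coefficients $\al_i=q(\veps_i)$ and $\bt_{i,j}=b(\veps_i,\veps_j)$ yields $q=\ql+\rg$, whence $\rg\in\Rig(q)$. Formula~\eqref{eq:I.5.5} drops out from $\rg(\veps_i+\veps_j)=\rg(\veps_i)+\rg(\veps_j)+b(\veps_i,\veps_j)=b(\veps_i,\veps_j)$ for $i\neq j$, together with $\rg(2\veps_i)=4\rg(\veps_i)=0=b(\veps_i,\veps_i)$ for $i=j$.

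The one point that requires care, given the absence of subtraction in $R$, is that one cannot derive a companion of $\rg$ by naively ``subtracting'' a companion of $\ql$ from one of $q$; all companion relations must instead be verified by positive manipulation of the expansions, which is precisely the role played by identity~\eqref{eq:I.1.15} and the off-diagonal condition on $b$ throughout the argument.
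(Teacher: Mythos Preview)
Your proof is correct and follows essentially the same approach as the paper's, merely with the two directions swapped and with more explicit computations in place of the paper's appeal to \propref{prop:I.5.1}. The paper handles the direction from off-diagonal companions to rigid complements by citing \propref{prop:I.5.1} directly, and it derives $b(\veps_i,\veps_i)=0$ from the identity $4\rg(\veps_i)=\rg(\veps_i)+\rg(\veps_i)+b(\veps_i,\veps_i)$ just as you do via~\eqref{eq:I.1.17}; your extra expansions (showing $b$ accompanies $\rg$ by hand, and your closing remark on the absence of subtraction) are sound but not needed once \propref{prop:I.5.1} is invoked.
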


\begin{proof}
Given a companion $b$ of $q$ we obtain a rigid complement $\rg$ in
$q$ by formula \eqref{eq:I.5.4}, as observed in
\propref{prop:I.5.1}, and \eqref{eq:I.5.4} implies \eqref{eq:I.5.5}.
If moreover $b$ is off-diagonal, then $b$ is uniquely determined by
\eqref{eq:I.5.5}.

Conversely, if $q=\ql+\rg$ with $\ql$ quasilinear and $\rg$ rigid,
then $\ql$ has the companion $b_0=0$, while $\rg$ has a (unique)
companion $b.$ Thus $b_0+b=b$ is a companion of $\ql+\rg=q.$ We have
$$4\rg(\veps_i)=\rg(\veps_i+\veps_i)=\rg(\veps_i)+\rg(\veps_i)+b(\veps_i,\veps_i),$$
and conclude that $b(\veps_i,\veps_i)=0,$ since $\rg(\veps_i)=0.$
Thus $b$ is off-diagonal.\end{proof}

\begin{schol}\label{schol:I.5.7}
 A form $\rg$ on $V$ is rigid iff
\begin{equation}\label{eq:I.5.7}
\rg\Big(\sum_ix_i\veps_i\Big)=\sum_{i<j}\rg(\veps_i+\veps_j)x_ix_j.\end{equation}
\end{schol}

\begin{notation}\label{notat:I.5.8}
We denote the set of all quadratic forms on $V$ by $\Quad(V)$, and
view this set as an $R$-module of $R$-valued functions in the
obvious way:
$$
\begin{array}{rl}
  (q_1+q_2)(x)  := & q_1(x)+q_2(x), \\[2mm]
  (\lm q)(x) := & \lm\cdot q(x).
\end{array}
$$
for every $x\in V$ $(q_1,q_2,q\in\Quad( V),$ $\lm\in R).$ We further
denote the set of quasilinear forms on $V$ by $\QL(V)$ and the set
of rigid forms on $V$ by $\Rig(V).$
\end{notation}

\begin{remarks}\label{rem:I.5.9} $ $
\begin{enumerate}

\item[a)] It is evident that both $\QL(V)$ and $\Rig(V)$ are submodules of the $R$-module $\Quad(V).$ \pSkip

\item[b)] As a consequence of Propositions \ref{prop:I.5.1}, \ref{prop:I.5.2}, we have
\begin{equation}\label{eq:I.5.8}
    \QL(V)+\Rig(V)=\Quad(V),\end{equation}
\begin{equation}\label{eq:I.5.9}
    \QL(V)\cap\Rig(V)=\{0\}.\end{equation}
\pSkip

\item [c)] For any $q_1,q_2\in\Quad(V)$
\begin{equation}\label{eq:I.5.10}
(q_1+q_2)_{\QL}=(q_1)_{\QL}+(q_2)_{\QL},
\end{equation}
and
\begin{equation}\label{eq:I.5.11}
\Rig(q_1)+\Rig(q_2)\subset\Rig(q_1+q_2).
\end{equation}
\pSkip

\item[d)] For any $q\in\Quad(V)$ and $\lm\in R$
\begin{equation}\label{eq:I.5.12}
(\lm q)_{\QL}=\lm\cdot q_{\QL},
\end{equation}
and
\begin{equation}\label{eq:I.5.13}
\lm\cdot\Rig(q) \subset\Rig(\lm q).
\end{equation}

Here the assertions about rigid complements in c), d) are obvious
from Definition~\ref{defn:I.5.3}, while the assertions about
quasilinear parts follow from the formula \eqref{eq:I.5.3}.
\end{enumerate}

\end{remarks}

\section{Upper bound semirings}\label{sec:I.ub}

On any semiring $R$ we have a binary relation $\leq_R$ as follows ($x,y \in R$):
\begin{equation}\label{eq:I.ub.1} x\leq_R y\quad \Leftrightarrow\quad \exists
z\in R:x+z=y.\end{equation}
Clearly this relation is reflexive ($x \leq_R x$) and transitive ($x \leq_R y, y \leq_R z \Rightarrow x \leq_R z$).

\begin{defn}\label{defn:I.ub.1} \footnote{The notion of an u.b. semiring appeared in a supertropical context already in \cite[\S11]{IzhakianKnebuschRowen2009Valuation}, but under another perspective, cf. \cite[Definition 11.5]{IzhakianKnebuschRowen2009Valuation}. We thank the referee for suggesting a more elaborate discussion of minimal ordering here.}
A semiring $R$ is called
\textbf{upper bound}  (abbreviated  u.b.) if $\leq_R$ is a partial order on the set $R$, i.e.,
$x \leq_R y, y \leq_R x \Rightarrow x = y$; this ordering is then called the \textbf{minimal ordering} on~$R$.
\end{defn}

The term ``upper bound'' refers to the property that the sum $x+y$ is a built in upper bound of the set $\{ x,y\} $ in $R$ with respect to $\le_R$ (which perhaps is not a minimal upper bound).

\begin{rem}\label{rem:I.ub.2}
When $R$ is u.b., the set $R \sm \{ 0\}$ is closed  under addition. Indeed, if $x+y = 0 $ then
$x \leq_R 0$. Since $0 + x = x$, also $0 \leq_R x$. Thus $x = 0$ and $y= 0$.
\end{rem}

\begin{examp}\label{exmp:I.ub.3}
Every bipotent semiring $R$ is upper bound, and the minimal ordering on $R$ coincides  with the total ordering on $R$ mentioned in the introduction, cf. \eqref{eq:0.5}, i.e.,
\begin{equation}\label{eq:I.ub.2} x\leq_R y\quad \Leftrightarrow\quad x+y=y.\end{equation}
\end{examp}
Indeed, if $x+ z = y$ for some $z \in R$, then $x+y = x+x+z = x+z =y$, because $x+x = x$. The other implication is trivial. Note that now $x+y$ is always the maximum of $\{ x,y\}.$

\begin{defn}[{cf. \cite[Chap. 20]{golan92}}]\label{defn:I.ub.4}
When $R$ is a (commutative) semiring then a partial ordering $\al$ on the set $R$ is called a \textbf{positive partial semiring ordering}, if
\begin{equation}\label{eq:I.ub.3} \qquad x\leq_\al y\quad \Rightarrow\quad x+z \leq_\al y +z,\end{equation}
\begin{equation}\label{eq:I.ub.4} x\leq_\al y\quad \Rightarrow\quad xz \leq_\al yz,\end{equation}
for all $x,y,z \in R$, and furthermore
\begin{equation}\label{eq:I.ub.5} 0 \leq_\al 1.\end{equation}
\end{defn}
\noindent \{Thus $0 \leq_\al x$ for all $x\in R$, and $x\leq_\al y$, $ z\leq_\al w$ implies that $x+z \leq_\al y + w,\ xz\leq_\al yw$.\}\footnote{These partial orderings retain sense when  $R$ is not commutative, by modifying \eqref{eq:I.ub.4} in an obvious way.}
\pSkip

The term ``minimal ordering'' alludes to the following facts.

\begin{prop}\label{prop:I.ub.5} Let $R$ be a semiring.
\begin{enumerate}
  \item[a)] The relation $\le_R$ (cf. \eqref{eq:I.ub.1}) obeys the rules \eqref{eq:I.ub.3}--\eqref{eq:I.ub.5}, whence is a positive partial semiring ordering if $R$ is u.b. \pSkip
  \item[b)] If $R $ carries a positive partial semiring ordering $\al$, then $R$ is u.b. and $\al$ is a refinement of the ordering $\leq_R$, i.e.,
\end{enumerate}
\begin{equation*} x\leq_R  y\quad \Rightarrow\quad x \leq_\al y.\end{equation*}

\end{prop}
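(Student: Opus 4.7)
The plan is to unwind both implications directly from the defining axioms, since everything reduces to verifying closure properties of $\leq_R$ and using positivity to compare with an abstract ordering $\alpha$.

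For part a), I would first check \eqref{eq:I.ub.3}--\eqref{eq:I.ub.5} for the relation $\leq_R$ without yet invoking the u.b.\ hypothesis. Reflexivity and transitivity of $\leq_R$ are already in the excerpt, and $0 \leq_R 1$ is immediate from $0 + 1 = 1$. If $x \leq_R y$, choose $w$ with $x + w = y$; then $(x + z) + w = y + z$ and $(xz) + (wz) = yz$, giving \eqref{eq:I.ub.3} and \eqref{eq:I.ub.4}. Under the u.b.\ hypothesis, antisymmetry holds by definition, so $\leq_R$ is a partial order satisfying the positivity axioms, hence a positive partial semiring ordering.

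For part b), assume $\alpha$ is a positive partial semiring ordering on $R$. The key preliminary observation is that $0 \leq_\alpha z$ for every $z \in R$: multiply $0 \leq_\alpha 1$ by $z$ using \eqref{eq:I.ub.4} to get $0 = 0 \cdot z \leq_\alpha 1 \cdot z = z$. Now suppose $x \leq_R y$, so that $x + z = y$ for some $z$. Applying \eqref{eq:I.ub.3} to $0 \leq_\alpha z$ with the element $x$, I get $x = x + 0 \leq_\alpha x + z = y$. This gives the refinement statement $x \leq_R y \Rightarrow x \leq_\alpha y$.

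It remains to deduce that $R$ is u.b., i.e.\ that $\leq_R$ is antisymmetric. If $x \leq_R y$ and $y \leq_R x$, the refinement just proved yields $x \leq_\alpha y$ and $y \leq_\alpha x$, so antisymmetry of $\alpha$ forces $x = y$. This completes the argument. I do not anticipate a genuine obstacle here; the only step that requires a moment of thought is the derivation $0 \leq_\alpha z$ from the single positivity axiom $0 \leq_\alpha 1$, which is what unlocks the compatibility needed to transport $\leq_R$ into $\leq_\alpha$.
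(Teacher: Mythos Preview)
Your proof is correct and follows essentially the same approach as the paper. The paper's argument is terser: for part a) it simply declares the rules \eqref{eq:I.ub.3}--\eqref{eq:I.ub.5} to be straightforward from \eqref{eq:I.ub.1}, and for part b) it invokes the fact $0 \leq_\alpha z$ as already noted (in the remark following Definition~\ref{defn:I.ub.4}) rather than rederiving it, and it leaves the antisymmetry transfer for the u.b.\ conclusion implicit---you spell out both of these steps explicitly, which is fine.
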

\begin{proof} a): The rules \eqref{eq:I.ub.3}--\eqref{eq:I.ub.5} are straightforward consequence of \eqref{eq:I.ub.1}. \pSkip

b): Let $x \leq_R y$. We have $y = x+z$ for some $z \in R $. By definition $0 \leq_\al z $ and then
$x \leq_\al x +z = y.$
\end{proof}

\begin{cor}\label{cor:I.ub.7}
Any subsemiring $A$ of an u.b. semiring $R$ is again u.b.
\end{cor}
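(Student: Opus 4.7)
The plan is to verify the u.b.\ property for $A$ directly from Definition~\ref{defn:I.ub.1}. I would first unpack what needs to be shown: the relation $\leq_A$ on $A$, given by $x \leq_A y \Leftrightarrow \exists z \in A : x + z = y$, must be a partial order. Reflexivity is immediate because $0 \in A$ and $x + 0 = x$, and transitivity follows from closure of $A$ under addition: if $x + z_1 = y$ and $y + z_2 = w$ with $z_1, z_2 \in A$, then $x + (z_1 + z_2) = w$ with $z_1 + z_2 \in A$. So the entire content of the corollary lies in verifying antisymmetry of $\leq_A$.

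The key observation is the monotone inclusion $\leq_A \;\subseteq\; \leq_R \!\upharpoonright\! (A\times A)$: any witness $z \in A$ for $x \leq_A y$ is, in particular, a witness $z \in R$ for $x \leq_R y$, since $A \subseteq R$. Hence if $x, y \in A$ satisfy $x \leq_A y$ and $y \leq_A x$, then $x \leq_R y$ and $y \leq_R x$ in the ambient semiring, and antisymmetry of $\leq_R$ (granted by the hypothesis that $R$ is u.b.) forces $x = y$. This establishes antisymmetry of $\leq_A$ and hence that $A$ is u.b.

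There is essentially no main obstacle here; the argument is the usual ``antisymmetry passes to sub-relations'' observation. The only conceptual subtlety worth flagging is that $\leq_A$ may in general be strictly finer than $\leq_R\!\upharpoonright\!(A\times A)$, because the existence of a witness in $R$ need not produce a witness in $A$. However, this asymmetry works in our favor: only the easy direction $\leq_A \Rightarrow \leq_R$ is needed, and that direction is automatic from $A \subseteq R$.
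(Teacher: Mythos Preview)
Your proof is correct and is essentially the same argument as the paper's, just unpacked: the paper observes that $\leq_R$ restricted to $A$ is a positive partial semiring ordering on $A$ and then invokes Proposition~\ref{prop:I.ub.5}(b), whose proof is precisely your observation that $\leq_A \subseteq \leq_R\!\restriction_A$ and hence antisymmetry descends. Your remark that $\leq_A$ may be strictly finer than $\leq_R\!\restriction_A$ is also noted in the paper immediately after the corollary.
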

\begin{proof} The relation $\leq_R$ is a positive partial semiring ordering on $R$, whence its restriction to $A$ is a positive partial semiring ordering on $A$. \propref{prop:I.ub.5}.b gives the claim.
\end{proof}

Of course it may happen that the restriction of $\leq_R$ to $A$ is strictly finer than $\leq_A$. \pSkip
Our interest here in u.b. semirings arises from the following

\begin{prop}\label{prop:I.ub.8} Let $R$ be a supertropical semiring and $M := eR$. Then
\begin{enumerate}
  \item[a)] $R$ is u.b.; \pSkip
  \item[b)] the restriction of $\leq_R$ to the set $M$ is the minimal ordering $\leq_M$ of the bipotent semiring~$M$.
\end{enumerate}

\end{prop}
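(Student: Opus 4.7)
The plan is to derive antisymmetry of $\le_R$ directly from the supertropical addition axioms \eqref{eq:0.4} and \eqref{eq:0.6}, and then to deduce (b) by transporting $\le_R$-inequalities between $R$ and $M = eR$ along the ghost map.

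For (a), I would suppose $x \le_R y$ and $y \le_R x$, producing $u, v \in R$ with $x + u = y$ and $y + v = x$. Applying $\nu_R$ (which respects addition) to both equations turns them into $ex + eu = ey$ and $ey + ev = ex$ inside the bipotent semiring $M$; antisymmetry of the total ordering on $M$ (\exampref{exmp:I.ub.3}) immediately yields $ex = ey$. I would then split $x + u = y$ into three cases depending on how $eu$ compares to $ex = ey$: if $eu < ex$, then \eqref{eq:0.6} gives $x + u = x$, whence $y = x$; if $eu > ex$, then \eqref{eq:0.6} gives $x + u = u = y$, so $eu = ey = ex$, a contradiction; if $eu = ex$, then \eqref{eq:0.4} forces $y = x + u = ex \in M$, and a parallel case split on $y + v = x$ either yields $x = y$ outright or forces $x = ey = y$ as well. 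In every case $x = y$, giving antisymmetry and hence (a).

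For (b), let $x, y \in M$. One direction is immediate: if $x \le_M y$, then $x + y = y$ by \exampref{exmp:I.ub.3}, so $z := y$ witnesses $x \le_R y$. Conversely, assume $x + z = y$ for some $z \in R$. Applying $\nu_R$ to both sides and using $\nu_R(x) = x$ and $\nu_R(y) = y$ (since $x, y \in M$) yields $x + ez = y$ with $ez \in M$. Adding $x$ to both sides and invoking $x + x = x$ in the bipotent semiring $M$ then gives $x + y = x + (x + ez) = x + ez = y$, i.e., $x \le_M y$.

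The main obstacle is the antisymmetry step in (a), because supertropical addition is case-defined and must be handled carefully. The key observation that collapses the argument is the preliminary reduction $ex = ey$ inside $M$; once this is in hand, the three cases for $eu$ each close in one line, either by \eqref{eq:0.4}, by \eqref{eq:0.6}, or by direct contradiction.
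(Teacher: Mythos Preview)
Your proof is correct. Part (b) is essentially the paper's argument (multiply by $e$ to replace the witness $z$ by $ez\in M$); the paper only records the nontrivial direction, while you write both, but the content is the same.

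For part (a) you take a genuinely different route. The paper gives a two-line algebraic manipulation with no case split: from $x+z=y$ and $y+w=x$ one gets $x+(z+w)=x$, hence $x+e(z+w)=x$; then, adding~$z$ and using $ez+z=ez$,
\[
y \;=\; x+z \;=\; x+e(z+w)+z \;=\; x+e(z+w) \;=\; x.
\]
Your argument instead first pushes both inequalities down to $M$ via $\nu_R$ to obtain $ex=ey$, and then resolves $x+u=y$ by a trichotomy on $eu$ versus $ex$ using axioms \eqref{eq:0.4} and \eqref{eq:0.6}. This is longer but perfectly valid; it has the mild advantage of making explicit exactly where each supertropical axiom is used, whereas the paper's trick hides the work inside the identity $ez+z=ez$. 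Either approach is acceptable.
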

\begin{proof} a): Assume that $x,y,z,w$ are elements of $R$ with $x+z = y$, $y+w = x$. Then
$x + (z+w) = x$, and  so $$x + e(z+w) = x + (z+w) = x.$$
Adding  $z$ on both sides and observing that $ez + z = ez$, we obtain
$$ y = x+z =  x + e(z +w) + z = x + e(z +w ) = x,$$
as desired.
\pSkip
b): Let $ex \leq_R ey$. There exists $z \in R$ with $ex +z = ey$. Multiplying by $e$ we obtain
$ex + ez = ey$, whence $ex \leq_M ey.$
\end{proof}

\begin{rem}\label{rem:I.ub.9} If $(R_i \ds | i \in I)$ is a family of semirings,  let
$R := \prod_{i\in I} R_i$. Given elements $x = (x_i \ds | i \in I)$, and $y = (y_i \ds | i \in I)$ of $R$, it is obvious that
\begin{equation*} x\leq_R  y\quad \Rightarrow\quad \forall i\in I: \ x_i \leq_{R_i} y_i.\end{equation*}
Thus $R$ is u.b. iff $R_i$ is u.b. for every $i \in I.$
\end{rem}
\begin{examp}\label{exmp:I.ub.10} If $U$ is an u.b. semiring and $I$ any set, then the Cartesian product  $U^I$ of copies of $U$
indexed by $I$ is a again u.b., and thus  every subsemiring $R$ of
$U^I$ also is u.b.
\end{examp}
We call such a semiring $R$ a \textbf{semiring of $U$-valued functions}. \pSkip

For similar reasons  the polynomial semiring $U[\lm_1, \dots, \lm_n]$ in any set if variables $(\lm_1, \dots, \lm_n)$ over an u.b. semiring $U$ is also u.b., cf.
\cite[Proposition 11.8]{IzhakianKnebuschRowen2009Valuation}.

\section{Companions on a free module}\label{sec:I.3}

We allow $R$ to be any semiring, but now always assume that $V$ is
a \textit{free} $R$-module with a fixed base $(\veps_i \ds |i\in
I),$ and that $q$ is a functional quadratic form on $V.$

\begin{defn}\label{defn:I.3.1} For any $(i,j)\in I\times I$, let
$C_{i,j}(q)$ denote the subset of $R$ consisting of the values
$b(\veps_i,\veps_j)$   for all companions $b$ of $q$ on
$R\veps_i\times R\veps_j$, cf.~Definition \ref{defn:I.2.1}. We call
the family of subsets of $R$
\begin{equation}\label{eq:I.3.1}
C(q):=(C_{i,j}(q) \ds |(i,j)\in I\times I)\end{equation} the
\bfem{companion table} of $q.$ Notice that
$C_{i,j}(q)=C_{j,i}(q).$ In the case $I=\{1,2,\dots,n\},$ we
usually write $C(q)$ as a symmetric $n\times n$-matrix,
\begin{equation}\label{eq:I.3.2}
C(q) :=\begin{pmatrix} C_{1,1}(q) &\dots & C_{1,n}(q)\\
\vdots & \ddots &\vdots\\
C_{n,1}(q) &\dots & C_{n,n}(q)\end{pmatrix},\end{equation} and
also speak of the \bfem{companion matrix} of $q.$  (Of course, if
$I$ is infinite, we may again view $C(q)$ as an $I\times I$-matrix
after choosing a total ordering on $I.$)
\end{defn}

As in \S\ref{sec:I.2}, we choose a fixed companion $b_0$ of $q.$ We put
\begin{equation}\label{eq:I.3.3}
\bt_{i,j}^0:=b_0(\veps_i,\veps_j).\end{equation}

Looking at Example \ref{examp:I.2.5}, we obtain the following
description of $C_{i,j}(q).$

\begin{prop}\label{prop:I.3.2} $C_{i,j}(q)$ is the set of all
$\bt\in R$ with
\begin{equation}\label{eq:I.3.4}
\lm^2q(\veps_i)+\mu^2q(\veps_j)+\lm\mu\bt=\lm^2q(\veps_i)
+\mu^2q(\veps_j)+\lm\mu\bt_{i,j}^0,\end{equation} for all
$\lm,\mu\in R\sm\{0\}.$\end{prop}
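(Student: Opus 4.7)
The proof plan is to unwind the definition of $C_{i,j}(q)$ and invoke Examples~\ref{examp:I.2.4} and \ref{examp:I.2.5}, which have already characterized exactly when a symmetric bilinear form accompanies $q$ on $R\veps_i \times R\veps_j$ (for $i=j$ and $i\ne j$ respectively).

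First I would observe that, because $V$ is free with base $(\veps_k\ds|k\in I)$, any symmetric bilinear form on $V$ is completely determined by its matrix of values on the basis, and conversely any symmetric matrix $(b(\veps_k,\veps_l))$ extends uniquely to a symmetric bilinear form on~$V$. In particular, for any $\bt\in R$ there exists a symmetric bilinear form $b$ on $V$ with $b(\veps_i,\veps_j)=\bt$ (e.g., obtained by modifying $b_0$ in the $(i,j)$ and $(j,i)$ entries only). Hence the question ``does $\bt$ lie in $C_{i,j}(q)$?'' reduces to ``does there exist such a $b$ accompanying $q$ on $R\veps_i\times R\veps_j$?''.

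For the direction ``$\bt\in C_{i,j}(q)\Rightarrow$ \eqref{eq:I.3.4}'', I would apply \exampref{examp:I.2.5} with $x_0=\veps_i$, $y_0=\veps_j$: plugging $b(\veps_i,\veps_j)=\bt$ and $b_0(\veps_i,\veps_j)=\bt_{i,j}^0$ into \eqref{eq:I.2.4} yields precisely \eqref{eq:I.3.4}. For the reverse direction, given $\bt$ satisfying \eqref{eq:I.3.4}, I build a symmetric bilinear form $b$ on $V$ with $b(\veps_i,\veps_j)=\bt$ as above; then \eqref{eq:I.3.4} is exactly the condition of \exampref{examp:I.2.5} saying $b$ accompanies $q$ on $R\veps_i\times R\veps_j$, i.e., $\bt\in C_{i,j}(q)$.

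The only point that needs a small extra remark is the diagonal case $i=j$, where \exampref{examp:I.2.5} should formally be replaced by \exampref{examp:I.2.4} (which additionally invokes Condition~(ii) of Remark~\ref{assump}). In that case \eqref{eq:I.3.4} reads
\[(\lm^2+\mu^2)q(\veps_i)+\lm\mu\bt=(\lm^2+\mu^2)q(\veps_i)+\lm\mu\bt_{i,i}^0,\]
and since $b_0$ itself accompanies $q$ on $R\veps_i$, \exampref{examp:I.2.4} applied to $b_0$ gives $(\lm^2+\mu^2)q(\veps_i)+\lm\mu\bt_{i,i}^0=(\lm^2+\mu^2)q(\veps_i)$, so \eqref{eq:I.3.4} is equivalent to the condition of \exampref{examp:I.2.4} for~$\bt$. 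Thus the two cases mesh, and there is no real obstacle — the bulk of the work was already done in Section~\ref{sec:I.2}.
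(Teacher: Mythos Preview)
Your approach is correct and is essentially the paper's own proof, which consists of the single sentence ``Looking at \exampref{examp:I.2.5}, we obtain the following description of $C_{i,j}(q)$.'' Your first paragraph (existence of a symmetric bilinear form with prescribed value $\bt$ at $(\veps_i,\veps_j)$) makes explicit a point the paper leaves implicit.

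One comment: your separate treatment of the diagonal case $i=j$ is unnecessary and in fact slightly misleading. \exampref{examp:I.2.5} applies verbatim with $x_0=y_0=\veps_i$ and, as the paper notes there, holds over \emph{any} semiring; so condition~\eqref{eq:I.3.4} is already the correct characterization for $i=j$ without further work. By contrast, \exampref{examp:I.2.4} assumes Condition~(ii) of Remark~\ref{assump}, which is \emph{not} a standing hypothesis in \S\ref{sec:I.3} (``We allow $R$ to be any semiring\dots''). Thus your reduction of \eqref{eq:I.3.4} to the condition of \exampref{examp:I.2.4} via $b_0$ is a detour through an unneeded extra hypothesis; simply drop that paragraph and let \exampref{examp:I.2.5} handle all pairs $(i,j)$ uniformly.
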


\begin{thm}\label{thm:I.3.3} If $(\bt_{i,j} \ds |(i,j)\in I\times I)$ is
a family in $R$ with $\bt_{i,j}\in C_{i,j}(q)$ and
$\bt_{i,j}=\bt_{j,i},$ then the bilinear form $b$ with
$b(\veps_i,\veps_j)=\bt_{i,j}$ is a companion of $q.$ This
establishes a bijection of the set of these families $(\bt_{i,j})$
with the set of companions of $q.$\end{thm}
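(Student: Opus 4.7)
The plan is to build the bilinear form $b$ from the data $(\beta_{i,j})$, verify that it accompanies $q$ pairwise on all products $R\veps_i\times R\veps_j$, and then promote this pointwise-on-axes information to a genuine companion of $q$ on $V\times V$ via \propref{prop:I.2.8}. Since $V$ is free with base $(\veps_i\ds|i\in I)$, any family $(\beta_{i,j})$ in $R$ with $\beta_{i,j}=\beta_{j,i}$ determines a unique symmetric bilinear form $b:V\times V\to R$ with $b(\veps_i,\veps_j)=\beta_{i,j}$, namely $b(x,y) = \sum_{i,j} x_i y_j \beta_{i,j}$ for $x = \sum_i x_i\veps_i,\ y = \sum_j y_j\veps_j$. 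So the content of the theorem is that this $b$ is a companion of $q$, and that every companion arises in this way.

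For the first key step, I fix $(i,j)\in I\times I$ and show $b$ accompanies $q$ on $R\veps_i\times R\veps_j$. If $x=\lm\veps_i$ and $y=\mu\veps_j$ with $\lm,\mu\in R\setminus\{0\}$, then the companionship equation at $(x,y)$ for $b$ reads
\[
q(\lm\veps_i+\mu\veps_j)=\lm^2 q(\veps_i)+\mu^2 q(\veps_j)+\lm\mu\beta_{i,j},
\]
while the corresponding equation for the fixed companion $b_0$ (with $\beta^0_{i,j}:=b_0(\veps_i,\veps_j)$) gives
\[
q(\lm\veps_i+\mu\veps_j)=\lm^2 q(\veps_i)+\mu^2 q(\veps_j)+\lm\mu\beta^0_{i,j}.
\]
Subtracting (formally adding $\lm^2 q(\veps_i)+\mu^2 q(\veps_j)$ on both sides) reduces the required identity to exactly the condition of \propref{prop:I.3.2} characterizing membership $\beta_{i,j}\in C_{i,j}(q)$, which we are assuming. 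The boundary cases $\lm=0$ or $\mu=0$ are trivial, since both sides of the companionship equation collapse to $q(x)+q(y)$.

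Once this is done for every $(i,j)$, the form $b$ accompanies $q$ on the set $S\times S$, where $S:=\bigcup_{i\in I} R\veps_i$, because $S\times S=\bigcup_{i,j}R\veps_i\times R\veps_j$. Since $(\veps_i\ds|i\in I)$ generates $V$ as an $R$-module, \propref{prop:I.2.8} applies and yields that $b$ is a companion of $q$ on all of $V\times V$. This is the content of the first assertion.

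For the bijection, the map sends a companion $b$ of $q$ to the family $\bigl(b(\veps_i,\veps_j)\bigr)_{(i,j)\in I\times I}$. Symmetry of these values comes from symmetry of $b$, and the restriction of $b$ to $R\veps_i\times R\veps_j$ is by definition a companion of $q$ there, so $b(\veps_i,\veps_j)\in C_{i,j}(q)$. Injectivity of this map is immediate: a bilinear form on a free module is determined by its values on base pairs. Surjectivity is exactly the first assertion of the theorem. The only step requiring any genuine work is the translation at the single pair $(i,j)$, and that is handled directly by \propref{prop:I.3.2}; the rest is bookkeeping together with \propref{prop:I.2.8}, so I do not expect any serious obstacle.
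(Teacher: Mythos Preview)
Your proof is correct and follows essentially the same approach as the paper: verify via the characterization in \propref{prop:I.3.2} (equivalently, the definition of $C_{i,j}(q)$) that $b$ accompanies $q$ on each $R\veps_i\times R\veps_j$, then invoke \propref{prop:I.2.8} to upgrade to a companion on all of $V$; the bijection is then immediate. The paper's proof is just a terser version of exactly this argument.
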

\begin{proof} Given such a family $(\bt_{i,j}),$ let $b$ denote
the (unique) symmetric bilinear form with
$b(\veps_i,\veps_j)=\bt_{i,j}.$ Then $b$ is a  companion of
$q$ on the set $\bigcup_{i,j}R\veps_i\times R\veps_j$ by
definition of the sets $C_{i,j}(q).$ We conclude by
\propref{prop:I.2.8} that $b$ is a companion of $q$ (on $V$). The
last assertion of the theorem is now obvious.\end{proof}

\begin{cor}\label{cor:I.3.4}
$C_{i,j}(q)$ is the set of values $b(\veps_i,\veps_j)$ with $b$
running through all companions  of $q.$\end{cor}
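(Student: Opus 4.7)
The plan is to show the two inclusions. The inclusion $\supseteq$ is immediate: if $b$ is a companion of $q$ on all of $V\times V$, then its restriction to $R\veps_i\times R\veps_j$ is certainly a companion of $q$ on that set, so $b(\veps_i,\veps_j)\in C_{i,j}(q)$ by definition.

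For the nontrivial inclusion $\subseteq$, I would start with an arbitrary $\bt\in C_{i,j}(q)$ and build a global companion $b$ of $q$ taking this value at $(\veps_i,\veps_j)$. The idea is to invoke Theorem~\ref{thm:I.3.3}, which tells us that any symmetric family $(\bt_{k,\ell})$ with $\bt_{k,\ell}\in C_{k,\ell}(q)$ assembles into a genuine companion of $q$. So I only need to choose such a family whose $(i,j)$-entry is the prescribed~$\bt$.

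Concretely, set $\bt_{i,j}:=\bt_{j,i}:=\bt$, and for every other index pair $(k,\ell)$ put $\bt_{k,\ell}:=\bt^0_{k,\ell}=b_0(\veps_k,\veps_\ell)$, where $b_0$ is the fixed companion of $q$ from \S\ref{sec:I.2}. Since $b_0$ is symmetric, $\bt^0_{k,\ell}=\bt^0_{\ell,k}$, so the entire family is symmetric; and since $b_0$ is a (global) companion of $q$, its restriction to $R\veps_k\times R\veps_\ell$ is a partial companion, so $\bt^0_{k,\ell}\in C_{k,\ell}(q)$. Thus the family $(\bt_{k,\ell})$ satisfies the hypothesis of Theorem~\ref{thm:I.3.3}, producing a companion $b$ of $q$ on $V$ with $b(\veps_i,\veps_j)=\bt$, as required.

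There is no real obstacle here; the only subtlety is the case $i=j$, where the prescription $\bt_{i,j}=\bt_{j,i}=\bt$ is consistent automatically, and the rest of the diagonal is filled in by $\bt^0_{k,k}\in C_{k,k}(q)$, which belongs to $C_{k,k}(q)$ because $b_0$ restricts to a companion on $R\veps_k\times R\veps_k$. The corollary then follows.
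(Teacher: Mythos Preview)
Your proof is correct and follows essentially the same route as the paper: both arguments invoke Theorem~\ref{thm:I.3.3} to extend a prescribed value $\bt\in C_{i,j}(q)$ to a full symmetric family $(\bt_{k,\ell})$ with $\bt_{k,\ell}\in C_{k,\ell}(q)$, thereby producing a global companion. The only cosmetic difference is that the paper fills the remaining entries by an arbitrary choice from each $C_{k,\ell}(q)$, whereas you use the specific values $b_0(\veps_k,\veps_\ell)$; your version is slightly more explicit (you also spell out the easy inclusion and the case $i=j$), but the argument is the same.
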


\begin{proof} We choose a total ordering on $I$ and fix a pair
$(i,j)\in I\times I$ with $i\le j.$ Given an element $\bt_{i,j}$
of $C_{i,j}(q)$, we have to find a companion $b$ of $q$ with
$b(\veps_i,\veps_j)=\bt_{i,j}.$ By \thmref{thm:I.3.3} this is easy:
We choose for every $(k,\ell)\in I\times I$ with $k\le \ell$ and
$(k,\ell)$ different from $(i,j)$ an element $\bt_{k,\ell}$ of
$C_{k,\ell}(q).$ By the theorem, there exists a (unique) companion
$b$ of $q$ such that $b(\veps_k,\veps_\ell)=\bt_{k,\ell}$ for all
$(k,\ell)\in I\times I$ with $k\le \ell.$ In particular,
$b(\veps_i,\veps_j)=\bt_{i,j}.$\end{proof}

As a consequence of this corollary, we state

\begin{prop}\label{prop:I.3.5}
$q$ is rigid iff every set $C_{i,j}(q)$ consists of only one
element.
\end{prop}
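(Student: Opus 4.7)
The proposition follows directly by combining Corollary~\ref{cor:I.3.4} with Theorem~\ref{thm:I.3.3}, so I will present both implications as short consequences of material already in hand, and my plan is essentially to spell out the bijection between companions and coefficient choices.

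For the forward implication, I take as the working definition that $q$ rigid means $q$ has exactly one companion $b_0$ (agreeing with the usage in the introduction and with Corollary~\ref{cor:I.3.4}). Invoking Corollary~\ref{cor:I.3.4}, which identifies
\[
C_{i,j}(q) \; = \; \{\, b(\veps_i,\veps_j) \ds | b \text{ is a companion of } q \,\},
\]
uniqueness of the companion forces $C_{i,j}(q)=\{b_0(\veps_i,\veps_j)\}$. So this direction is immediate.

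For the converse, suppose each $C_{i,j}(q)$ is a singleton, say $C_{i,j}(q)=\{\bt_{i,j}\}$. Since $C_{i,j}(q)=C_{j,i}(q)$ by definition, the family $(\bt_{i,j})$ is automatically symmetric, and Theorem~\ref{thm:I.3.3} asserts that symmetric families $(\bt_{i,j})\in\prod_{i,j}C_{i,j}(q)$ biject with the companions of $q$. Hence there is only one such family, and correspondingly only one companion of $q$, so $q$ is rigid.

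There is really no obstacle here; the only thing to watch is recording explicitly that the symmetry requirement $\bt_{i,j}=\bt_{j,i}$ in Theorem~\ref{thm:I.3.3} is automatic once each $C_{i,j}(q)$ is a singleton, and citing Corollary~\ref{cor:I.3.4} (rather than the narrower definition of $C_{i,j}(q)$ via accompaniment on $R\veps_i\times R\veps_j$) in the forward direction, so that the single element is immediately visible as the value of the unique global companion $b_0$ at $(\veps_i,\veps_j)$.
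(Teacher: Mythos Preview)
Your argument is correct and follows the same route as the paper, which simply records the proposition as an immediate consequence of Corollary~\ref{cor:I.3.4} (together with Theorem~\ref{thm:I.3.3}); you have merely written out the two directions in more detail.
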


\begin{remark}\label{rem:I.3.6}
It follows from \propref{prop:I.3.2} that for any $(i,j)$ in $I\times
I$ \begin{equation}\label{eq:I.3.5} C_{i,j}(q)=C_{i,j}(q \ds
|R\veps_i+R\veps_j).\end{equation} More generally,
$C_{i,j}(q)=C_{i,j}(q|W)$ for any free submodule $W$ of $V$ which
contain $\veps_i,\veps_j$ as part of a base.\end{remark} In the
case $I=\{1,\dots,n\},$ \propref{prop:I.3.2} and \thmref{thm:I.3.3} read
as follows:

\begin{schol}\label{schol:I.3.7}
Assume that $(\veps_i \ds |1\le i\le n)$ is a base of the free
module $V.$ Assume further that in this base $q$ has the
triangular
scheme
$$q=\begin{bmatrix} \al_{1,1} & \dots & \al_{1,n}\\
&\ddots & \vdots\\
0 & & \al_{n,n}\end{bmatrix}.$$ We can choose for $b_0$ the
balanced companion of $q$ with diagonal coefficients
$2\al_{i,i}$ and upper
diagonal coefficients $\al_{i,j},$ cf.~\eqref{eq:I.1.23}. Thus the
companions of $q$ are the bilinear forms
$$b=\begin{pmatrix} \bt_{1,1} & \dots & \bt_{1,n}\\
\vdots & \ddots  &\vdots\\
\bt_{n,1} & \dots & \bt_{n,n}\end{pmatrix}$$ with coefficients
$\bt_{i,j}=\bt_{j,i}$ satisfying
\begin{equation}\label{eq:I.3.6}
(\lm+\mu)^2\al_{i,i}=(\lm^2+\mu^2)\al_{i,i}+\lm\mu\bt_{i,i},\end{equation}
for $1\le i\le n,$ and
\begin{equation}\label{eq:I.3.7}
 \lm^2
\al_{i,i}+\mu^2\al_{j,j}+\lm\mu\al_{i,j}=\lm^2\al_{i,i}+\mu^2\al_{j,j}+\lm\mu\bt_{i,j},\end{equation}
for
$1\le i<j\le n,$ with both $\lm,\mu$ running through
$R\sm\{0\}.$
\end{schol}


Our main focus will be a precise description of the sets
$C_{i,j}(q)$ in the case that $R$ is a tangible supersemifield.

But some facts  hold in the much more broader class of u.b.
semirings introduced in \S\ref{sec:I.ub}. \emph{In all the
following, up to the end of the paper the unsubscripted  notation
$\leq$ refers to the minimal order $\leq_R$ of the u.b. semiring $R$
under consideration. }

\begin{prop}\label{prop:I.3.8} If $R$ is u.b., then every set
$C_{i,j}(q)$ is convex in $R.$ In other words, if $\bt_1\le \gm\le
\bt_2$ and $\bt_1,\bt_2\in C_{i,j}(q),$ then $\gm\in
C_{i,j}(q).$\end{prop}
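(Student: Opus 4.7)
My plan is to combine the characterization of $C_{i,j}(q)$ from Proposition~\ref{prop:I.3.2} with the monotonicity and antisymmetry properties of the minimal ordering $\leq_R$ established in Proposition~\ref{prop:I.ub.5}. Specifically, I recall that for any $\lambda,\mu \in R\setminus\{0\}$, an element $\bt \in R$ lies in $C_{i,j}(q)$ exactly when
$$\lm^2 q(\veps_i) + \mu^2 q(\veps_j) + \lm\mu\bt \; = \; \lm^2 q(\veps_i) + \mu^2 q(\veps_j) + \lm\mu\bt_{i,j}^0,$$
so convexity amounts to showing this equation is stable under replacing $\bt_1$ or $\bt_2$ by a sandwiched $\gamma$.

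First I would fix the pair $(i,j)$, the elements $\bt_1,\bt_2 \in C_{i,j}(q)$ with $\bt_1 \leq \gamma \leq \bt_2$, and an arbitrary pair $\lm,\mu \in R\setminus\{0\}$. Set $S := \lm^2 q(\veps_i) + \mu^2 q(\veps_j)$. Since $R$ is u.b., Proposition~\ref{prop:I.ub.5}(a) tells us that $\leq_R$ is a positive partial semiring ordering, so multiplication by the fixed element $\lm\mu$ and addition of the fixed element $S$ both preserve $\leq_R$. Applied to the chain $\bt_1 \leq \gamma \leq \bt_2$ this yields
$$S + \lm\mu\bt_1 \;\leq\; S + \lm\mu\gamma \;\leq\; S + \lm\mu\bt_2.$$

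Now by hypothesis both $\bt_1$ and $\bt_2$ satisfy the defining equation for $C_{i,j}(q)$, so $S + \lm\mu\bt_1 = S + \lm\mu\bt_{i,j}^0 = S + \lm\mu\bt_2$; in particular the outer terms of the chain coincide. The antisymmetry of $\leq_R$ (which is exactly the u.b. axiom) then forces $S + \lm\mu\gamma = S + \lm\mu\bt_{i,j}^0$. Since this holds for every $\lm,\mu \in R\setminus\{0\}$, Proposition~\ref{prop:I.3.2} yields $\gamma \in C_{i,j}(q)$, as desired.

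I do not anticipate a significant obstacle: the argument is essentially a one-line sandwiching once one has the equational description from Proposition~\ref{prop:I.3.2} and the positivity/antisymmetry of the minimal ordering. The only mild subtlety is the case $\lm\mu = 0$, which can occur when $R$ has zero divisors, but then both sides of the defining equation reduce to $S$ and the condition is vacuous, so it causes no trouble.
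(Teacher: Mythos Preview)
Your proof is correct and follows essentially the same approach as the paper: invoke the equational characterization of $C_{i,j}(q)$ from Proposition~\ref{prop:I.3.2}, use the compatibility of $\leq_R$ with addition and multiplication to sandwich $S+\lm\mu\gm$ between $S+\lm\mu\bt_1$ and $S+\lm\mu\bt_2$, and then appeal to antisymmetry. The paper's version is terser but identical in substance; your explicit mention of Proposition~\ref{prop:I.ub.5} and the harmless $\lm\mu=0$ case just makes the justification more explicit.
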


\begin{proof} We invoke \propref{prop:I.3.2}. For any $\lm,\mu
\in R\sm \{0\}$
$$
\begin{array}{lll}
\lm^2q(\veps_i)+\mu^2q(\veps_j)+\lm\mu\bt_1 & \le &
\lm^2q(\veps_i)+\mu^2q(\veps_j)+\lm\mu\gm
\\[1mm] & \le &  \lm^2q(\veps_i) +\mu^2q(\veps_j)+\lm\mu\bt_2.\end{array}
$$ Now the first sum equals the third
sum, and hence equals also the second sum.
\end{proof}

In the same vein we obtain
\begin{prop}\label{prop:I.3.9} Assume that $R$ is a semiring and
$q:V\to R$ is quasilinear on a set $T\subset V\times V$ (cf.
Definition \ref{defn:I.2.3}). Let $b_1$ and $b_2$ be symmetric
bilinear forms on $V.$
\begin{enumerate}
\item[a)] If $b_1$ and $b_2$ are companions of $q$ on $T,$ then
$b_1+b_2$ is a companion of $q$ on $T.$ \pSkip

\item[b)] Assume that $R$
is u.b. If $b_1+b_2$ is a companion of $q$ on $T,$ then both $b_1$
and $b_2$ are companions of $q$ on $T.$\end{enumerate}\end{prop}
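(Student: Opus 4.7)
The plan is to start by reformulating both hypotheses into a single simple form. If $q$ is quasilinear on $T$, then for $(x,y)\in T$ we have $q(x+y)=q(x)+q(y)$, so the definition of ``$b$ accompanies $q$ at $(x,y)$'' collapses from $q(x+y)=q(x)+q(y)+b(x,y)$ to the one-variable absorption identity
\begin{equation*}
s+b(x,y)=s, \qquad \text{where } s:=q(x)+q(y).
\end{equation*}
Thus everything reduces to manipulating this absorption condition inside $R$, with $b_0=0$ playing the role of the reference companion from Definition~\ref{defn:I.2.1}.

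For part (a), I would fix an arbitrary $(x,y)\in T$ and use the two hypotheses $s+b_1(x,y)=s$ and $s+b_2(x,y)=s$. Adding $b_2(x,y)$ to both sides of the first equation and then substituting the second gives
\begin{equation*}
s+b_1(x,y)+b_2(x,y)=s+b_2(x,y)=s,
\end{equation*}
which is precisely the absorption identity for $b_1+b_2$ at $(x,y)$. Nothing beyond the semiring axioms is needed here, so this half works over arbitrary $R$.

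For part (b), I would exploit the minimal ordering $\leq_R$ and the antisymmetry that comes from the u.b. assumption. The hypothesis reads $s+b_1(x,y)+b_2(x,y)=s$, and from the very definition of $\leq_R$ this yields the chain
\begin{equation*}
s\ \leq_R\ s+b_1(x,y)\ \leq_R\ s+b_1(x,y)+b_2(x,y)\ =\ s.
\end{equation*}
Antisymmetry of $\leq_R$ then forces $s+b_1(x,y)=s$, i.e.\ $b_1$ accompanies $q$ at $(x,y)$, and the symmetric argument (inserting $b_2(x,y)$ first in the chain) gives the same conclusion for $b_2$.

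The main obstacle is really just conceptual rather than technical: one has to recognize that quasilinearity reduces both parts to an absorption statement about a single element of $R$, and that the content of the u.b.\ hypothesis in part (b) is exactly the cancellation-via-antisymmetry step above. Once this is in place, neither direction requires any bilinear manipulation or reference to a base of $V$.
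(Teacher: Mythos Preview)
Your argument is correct and essentially identical to the paper's: both reduce via quasilinearity to the absorption condition $q(x)+q(y)+b(x,y)=q(x)+q(y)$, then for~(a) substitute one equation into the other, and for~(b) sandwich $q(x)+q(y)+b_i(x,y)$ between $q(x)+q(y)$ and $q(x)+q(y)+b_1(x,y)+b_2(x,y)$ and invoke antisymmetry of $\le_R$. Your abbreviation $s:=q(x)+q(y)$ is the only cosmetic difference.
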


\begin{proof}
Let $(x,y)\in T.$ We have $q(x+y)=q(x)+q(y).$

 a): From $q(x)+q(y)=q(x)+q(y)+b_i(x,y)$ $(i=1,2),$ we obtain
$$q(x)+q(y)=q(x)+q(y)+b_2(x,y)=q(x)+q(y)+b_1(x,y)+b_2(x,y).$$

 b): We have for $i=1,2$
 $$q(x)+q(y)\le q(x)+q(y)+b_i(x,y)\le
 q(x)+q(y)+b_1(x,y)+b_2(x,y).$$ The first sum equals the third sum,
 and
 hence also equals the second sum.
\end{proof}

Let $R_0$ denote the prime supertropical semiring, $R_0=\{0,1,e\}$.

\begin{cor}\label{cor:I.3.10} Assume that $R$ is u.b. and contains
$R_0$ as a subsemiring. Let~ $V$ be any $R$-module. Then
a symmetric bilinear form $b$ on $V$ is a companion of a
quasilinear form  $q: V\to R$ iff $eb$ is a companion of
$q.$\end{cor}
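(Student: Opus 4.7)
The plan is to apply Proposition~\ref{prop:I.3.9} with the two summands equal, after first rewriting $eb$ as the sum $b + b$.

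The key preliminary observation is that the hypothesis $R_0 \subseteq R$ guarantees $e = 1 + 1$ holds inside $R$ (not merely in $R_0$), so for every $(x,y) \in V \times V$ we have
$$(b+b)(x,y) \;=\; b(x,y) + b(x,y) \;=\; (1+1)\,b(x,y) \;=\; e\,b(x,y) \;=\; (eb)(x,y).$$
Hence $b + b = eb$ as symmetric bilinear forms on $V$. Note also that $q$ being quasilinear means exactly that $q$ is quasilinear on $T = V \times V$ in the sense of Definition~\ref{defn:I.2.3}, so Proposition~\ref{prop:I.3.9} applies with this choice of~$T$.

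For the forward direction, assume $b$ is a companion of~$q$. Taking $b_1 = b_2 = b$ in Proposition~\ref{prop:I.3.9}(a) (which needs no u.b.\ hypothesis), the sum $b_1 + b_2 = b + b = eb$ is again a companion of~$q$.

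For the converse, assume $eb$ is a companion of~$q$. Write $eb = b + b$ and invoke Proposition~\ref{prop:I.3.9}(b), which is precisely the place where the u.b.\ assumption on~$R$ is used. The conclusion is that each summand is a companion of~$q$; both summands are~$b$, so $b$ is a companion of~$q$.

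There is no real obstacle here: the content is the identity $b + b = eb$ combined with the two halves of Proposition~\ref{prop:I.3.9}. The only point requiring a little care is to note that $R_0 \subseteq R$ forces the identity $1 + 1 = e$ to hold in $R$ itself, so that the pointwise computation $(b+b)(x,y) = eb(x,y)$ is legitimate; and to remember that the backward implication, unlike the forward one, genuinely uses the u.b.\ hypothesis on~$R$.
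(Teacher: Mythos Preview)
Your proof is correct and is exactly the argument the paper has in mind: the corollary is stated without proof precisely because it follows immediately from Proposition~\ref{prop:I.3.9} by taking $b_1=b_2=b$ and using $b+b=eb$, which your write-up makes explicit. Your remarks on where the hypotheses $R_0\subset R$ and $R$ u.b.\ are actually used are accurate and helpful.
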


\begin{prop}\label{prop:I.3.11} Assume again that $R_0 \subset R$ and further that 
$C_{i,j}(q)\cap Re \ne\emptyset.$ Then $C_{i,j}(q)$ is closed under
addition.\end{prop}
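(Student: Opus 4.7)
The plan is to apply the characterization of companion values given in \propref{prop:I.3.2}: writing $A := \lm^2 q(\veps_i) + \mu^2 q(\veps_j)$, a scalar $\bt$ lies in $C_{i,j}(q)$ iff
\[ A + \lm\mu\bt \; = \; A + \lm\mu\bt^0_{i,j} \]
for every choice of $\lm, \mu \in R \setminus \{0\}$. First I would use the hypothesis $C_{i,j}(q)\cap Re \ne \emptyset$ to fix a ghost $\gm \in C_{i,j}(q)$ with $\gm = e\gm$. Since $\gm$ itself lies in $C_{i,j}(q)$, substituting $\bt = \gm$ into the above identity and chaining shows that for every $\bt \in C_{i,j}(q)$,
\[ A + \lm\mu\bt \; = \; A + \lm\mu\gm. \]
Thus $\gm$ may serve as a replacement reference value in the characterization of $C_{i,j}(q)$.

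Given $\bt_1, \bt_2 \in C_{i,j}(q)$, I would then compute $A + \lm\mu(\bt_1 + \bt_2)$ by distributivity and two successive applications of the displayed identity, absorbing $\bt_1$ and $\bt_2$ into $\gm$ one at a time:
\[ A + \lm\mu(\bt_1 + \bt_2) \; = \; (A + \lm\mu\bt_1) + \lm\mu\bt_2 \; = \; (A + \lm\mu\gm) + \lm\mu\bt_2 \; = \; A + \lm\mu\gm + \lm\mu\gm. \]
The crucial arithmetic step is then to collapse the repeated ghost summand. Because $\gm \in Re$, the element $\lm\mu\gm = e(\lm\mu\gm)$ again lies in $Re$; and since $R_0 = \{0,1,e\} \subset R$, we have $e + e = e$ in $R$, hence $x + x = x$ for every $x \in Re$. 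In particular $\lm\mu\gm + \lm\mu\gm = \lm\mu\gm$, so
\[ A + \lm\mu(\bt_1 + \bt_2) \; = \; A + \lm\mu\gm \; = \; A + \lm\mu\bt^0_{i,j}. \]
As this holds for all nonzero $\lm, \mu$, \propref{prop:I.3.2} places $\bt_1 + \bt_2$ in $C_{i,j}(q)$, as desired.

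The one place where care is needed — and where I expect the main conceptual step to sit — is the transition from the original reference value $\bt^0_{i,j}$ to the ghost reference $\gm$: the argument would not go through with $\bt^0_{i,j}$ in place of $\gm$, since one would then be left with an expression of the form $A + \lm\mu\bt^0_{i,j} + \lm\mu\bt^0_{i,j}$, which in general does not collapse. So the hypothesis $C_{i,j}(q)\cap Re \ne \emptyset$ is used precisely to produce an additively idempotent element available for use as reference; the u.b. assumption, carried over from the preceding results, is not otherwise invoked in this argument.
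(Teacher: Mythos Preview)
Your proof is correct and follows essentially the same line as the paper's: fix an element $\bt_0\in C_{i,j}(q)\cap Re$ (your $\gm$), use \propref{prop:I.3.2} to replace both $\bt_1$ and $\bt_2$ by $\bt_0$ in the sum $A+\lm\mu\bt_1+\lm\mu\bt_2$, and then collapse $\bt_0+\bt_0=\bt_0$ via $e+e=e$. Your closing remark that the u.b.\ hypothesis is not actually used here is also accurate; the paper's proof likewise relies only on $R_0\subset R$ and the ghost-reference hypothesis.
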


\begin{proof}
We again invoke \propref{prop:I.3.2}. Let $\bt_0 \in C_{i,j}(q) \cap
Re.$ Pick $\bt_1,\bt_2\in C_{i,j}(q).$ For any $\lm,\mu\in R$ the
three sums $\lm^2q(\veps_i)+\mu^2q(\veps_j)+\lm \mu \bt_k$
$(k=0,1,2)$ are equal. From this we obtain that
$$
\begin{array}{lll}
\lm^2q(\veps_i)+\mu^2q(\veps_j)+\lm\mu(\bt_1+\bt_2) & = &
\lm^2q(\veps_i) +\mu^2q(\veps_j)+\lm\mu(\bt_0+\bt_0)
\\[1mm] &  = & \lm^2q(\veps_i)+\mu^2q(\veps_j)+ \lm\mu\bt_0,
\end{array}
$$ since $\bt_0+\bt_0=\bt_0.$ We conclude that
$\bt_1+\bt_2\in C_{i,j}(q).$
\end{proof}

\begin{prop}\label{prop:I.3.12} 
If $R$ is a supertropical semiring, then
\begin{equation}\label{eq:I.3.9}
C_{i,i}(q)=\{\bt\in R \ds |0\le\bt\le eq(\veps_i)\}\end{equation}
for every $i\in I.$\end{prop}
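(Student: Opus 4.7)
My plan is to translate the membership condition for $C_{i,i}(q)$ via Example~\ref{examp:I.2.4}, which is applicable here since Proposition~\ref{prop:I.2.15} supplies condition (ii) of Remark~\ref{assump} in any supertropical semiring. The translation yields that $\bt \in C_{i,i}(q)$ iff
\[(\lm^2+\mu^2)\,q(\veps_i) \;=\; (\lm^2+\mu^2)\,q(\veps_i) + \lm\mu\bt \qquad(\ast)\]
for every $\lm,\mu\in R\sm\{0\}$. Here the unsubscripted $\le$ refers to the minimal (u.b.) ordering of $R$, available by Proposition~\ref{prop:I.ub.8}.

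For the inclusion $\subseteq$, I would specialize $(\ast)$ to $\lm=\mu=1$ and read off $eq(\veps_i)+\bt=eq(\veps_i)$; this witnesses $\bt\le eq(\veps_i)$ (take $z=eq(\veps_i)$ in the definition of $\le_R$), and $0\le\bt$ holds in any u.b.\ semiring.

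For the reverse inclusion I would verify $(\ast)$ by chaining two absorption identities. The first is the supertropical identity, obtained by expanding $(\lm+\mu)^2$ distributively with $2=e$ and invoking Remark~\ref{assump}(ii):
\[\lm^2+\mu^2 \;=\; (\lm+\mu)^2 \;=\; \lm^2 + e\lm\mu + \mu^2 \;=\; (\lm^2+\mu^2)+e\lm\mu.\]
Multiplying by $q(\veps_i)$ gives $(\lm^2+\mu^2)\,q(\veps_i)+e\lm\mu\,q(\veps_i)=(\lm^2+\mu^2)\,q(\veps_i)$. The second is the elementary fact, checked by cases from Definition~\ref{defn:0.2}, that $\bt\le eq(\veps_i)$ forces $\bt+eq(\veps_i)=eq(\veps_i)$; multiplying by $\lm\mu$ yields $\lm\mu\bt+e\lm\mu\,q(\veps_i)=e\lm\mu\,q(\veps_i)$. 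Since the relation ``$x+y=y$'' is transitive ($x+y=y$ and $y+z=z$ together give $x+z=x+y+z=y+z=z$), chaining these two identities produces exactly $(\ast)$.

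No serious obstacle is anticipated; the crux is merely recognizing that $\bt\le eq(\veps_i)$ in a supertropical semiring is equivalent to the absorption relation $\bt+eq(\veps_i)=eq(\veps_i)$, after which the proof reduces to a short two-step chain of absorptions.
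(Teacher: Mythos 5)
Your proof is correct, and it differs from the paper's mainly in how the inclusion $\{\bt : 0\le\bt\le eq(\veps_i)\}\subseteq C_{i,i}(q)$ is handled. The paper reduces to $V=R$, notes that $q(x+y)=\al x^2+\al y^2 = \al x^2+\al y^2+e\al xy$, so that both $b_0=0$ and $b_1(x,y)=e\al xy$ are companions, and then invokes the convexity of $C_{i,i}(q)$ (Proposition~\ref{prop:I.3.8}) to fill in the whole interval $[0,e\al]$. You instead verify the defining condition $(\ast)$ for an arbitrary $\bt$ in the interval by chaining two absorption identities. Both routes rest on the same supertropical identity $(\lm+\mu)^2=\lm^2+\mu^2$ from Remark~\ref{assump}(ii); yours is a bit more self-contained (no appeal to the convexity lemma), while the paper's is shorter once Proposition~\ref{prop:I.3.8} is available and has the advantage of exhibiting explicit companion forms realizing the two endpoints. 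The reverse inclusion (specializing to $\lm=\mu=1$, equivalently $q(\veps_1+\veps_1)$) is the same in both. One small point worth keeping in mind: the absorption equivalence you use, namely that $\bt\le_R y$ is the same as $\bt+y=y$, is not automatic in a supertropical semiring for arbitrary $y$ (it can fail when $e\bt=ey$ with $y$ tangible); it works here precisely because $y=eq(\veps_i)$ is a ghost element, so \eqref{eq:0.4} gives $\bt+y=ey=y$ in the borderline case. You gesture at this with ``checked by cases from Definition~\ref{defn:0.2},'' which is fine, but the ghost-ness of $eq(\veps_i)$ is the crucial ingredient and deserves explicit mention.
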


\begin{proof} Due to Remark \ref{rem:I.3.6}, we may assume that
$I=\{1\},$ $V=R\veps_1,$ and then may simplify to $V=R, $
$\veps_1=1_R.$ Let $\al:=q(\veps_1).$ This means that $q(x)=\al
x^2$ $(x\in R).$ We have
$$q(x+y)=\al x^2+\al y^2=\al x^2+\al y^2+e\al xy.$$
Thus both linear forms $b_0(x,y):=0,$ $b_1(x,y):=e\al xy$ are
companions of $q,$ i.e., $0\in C_{1,1}(q) $ and $e\al\in
C_{1,1}(q).$
 We know by \propref{prop:I.3.8} that $C_{1,1}(q)$ is
convex; hence $C_{1,1}(q)$ contains the interval
$$[0,e\al]:=\{\bt\in R \ds |0\le \bt\le e\al\}.$$
On the other hand, if $b$ is any companion of $q,$ then
$$eq(\veps_1)=q(e\veps_1)=q(\veps_1+\veps_1)=eq(\veps_1)+b(
\veps_1,\veps_1);$$ hence $b(\veps_1,\veps_1)\le eq(\veps_1).$
Trivially, $0\le b(\veps_1,\veps_1).$ Thus $b(\veps_1,\veps_1)$ is
contained in $[0,e\al].$
\end{proof}

\begin{thm}\label{thm:I.3.13}
Assume that $R$ is a supertropical semiring with $e \tT = \tG.$
Assume further that $b: V\times V\to R$ is a symmetric bilinear
form that accompanies $q$  on the set $\bigcup_{i\in I} \tT
\veps_i.$ Then $b$ is a companion of $q.$\end{thm}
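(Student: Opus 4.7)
My plan is to reduce \thmref{thm:I.3.13} to \propref{prop:I.2.8} by showing that $b$ already accompanies $q$ on the larger set $S' := \bigcup_{i\in I} R\veps_i$, using \lemref{lem:I.2.7} to pass from additive generation. The crucial algebraic point is that the hypothesis $e\tT = \tG$ lets us realize every scalar in $R$ through the additive monoid generated by $\tT$.

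First I set $S := \bigcup_{i\in I} \tT\veps_i$, which by hypothesis is a subset of $V$ on which $b$ accompanies $q$. By \lemref{lem:I.2.7}, $b$ then accompanies $q$ on the additive submonoid $\langle S\rangle$ of $(V,+)$. So it suffices to prove the inclusion $S'\subset \langle S\rangle$, since then $b$ accompanies $q$ on $S'\times S'\subset \langle S\rangle\times\langle S\rangle$, and \propref{prop:I.2.8} (applied to the generating family $(\veps_i)$) finishes the proof.

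For the inclusion $S'\subset \langle S\rangle$, I fix $i\in I$ and $\lambda\in R$, and treat the three possibilities for $\lambda$. If $\lambda\in\tT$, then $\lambda\veps_i\in \tT\veps_i\subset S\subset\langle S\rangle$. If $\lambda=0$, then $\lambda\veps_i=0$ is the neutral element of $\langle S\rangle$. The only case requiring the hypothesis $e\tT=\tG$ is $\lambda\in\tG$: here we write $\lambda=e\lambda'$ with $\lambda'\in\tT$, and since $e=1+1$ gives $\lambda=\lambda'+\lambda'$, we have
$$\lambda\veps_i=\lambda'\veps_i+\lambda'\veps_i,$$
which is a two-element sum of members of $\tT\veps_i\subset S$, hence lies in $\langle S\rangle$.

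The main work is therefore the scalar identity $\lambda\veps_i=\lambda'\veps_i+\lambda'\veps_i$, which rests on the supertropical relation $e=1+1$ together with the surjectivity $e\tT=\tG$; without the latter there would be ghost scalars unreachable as $e$ times a tangible, and the reduction would fail. I do not expect a serious obstacle beyond this observation, since \lemref{lem:I.2.7} and \propref{prop:I.2.8} are already tailored to exactly this kind of generation-by-addition argument.
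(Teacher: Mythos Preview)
Your proof is correct and takes a genuinely different, somewhat cleaner route than the paper. Both arguments invoke \propref{prop:I.2.8} for the final step, but the paper verifies directly that equation~\eqref{eq:I.3.10} (the companion identity on $R\veps_i\times R\veps_j$) extends from tangible scalars $\lm,\mu\in\tT$ to all of $R\setminus\{0\}$ by an explicit algebraic manipulation: adding suitable terms to both sides to pass from $\lm$ to $e\lm$, and multiplying through by $e$. You instead invoke \lemref{lem:I.2.7} once, observing that the additive submonoid $\langle S\rangle$ generated by $S=\bigcup_i\tT\veps_i$ already contains every $R\veps_i$, since any ghost scalar $\lm=e\lm'=\lm'+\lm'$ with $\lm'\in\tT$ is reached by a two-term sum. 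Your approach is more modular and avoids ad hoc computation; the paper's approach is self-contained at the level of the single identity~\eqref{eq:I.3.10} and makes the supertropical arithmetic visible. Both extend without change to the non-free setting of Remark~\ref{rem:I.3.14}, since neither \lemref{lem:I.2.7} nor \propref{prop:I.2.8} needs freeness.
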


\begin{proof} By \propref{prop:I.2.8} it suffices to verify that $b$
accompanies $q$ on $R\veps_i\times R\veps_j$ for any two indices
$i,j\in I.$ This means that we have to verify (cf.~\eqref{eq:I.2.4})
\begin{equation}\label{eq:I.3.10}
\lm^2q(\veps_i)+\mu^2q(\veps_j)+\lm\mu
b(\veps_i,\veps_j)=\lm^2q(\veps_i)+\mu^2q(\veps_j)+\lm\mu
b_0(\veps_i,\veps_j),\end{equation} for any $\lm ,\mu\in
R\sm\{0\},$ knowing that \eqref{eq:I.3.10} holds for
$\lm,\mu\in \tT.$ We start with \eqref{eq:I.3.10}  for fixed
$\lm,\mu\in \tT.$ Adding $\lm^2q(\veps_i)+\lm\mu
b(\veps_i,\veps_j)$ on both sides, we obtain \eqref{eq:I.3.10} with~
$\lm$ replaced by~ $e\lm.$ In an analogous way we obtain
\eqref{eq:I.3.10} with $\mu$ replaced by $e\mu.$ Multiplying
\eqref{eq:I.3.10} by the scalar $e$, we obtain \eqref{eq:I.3.10} with
$\lm,\mu$ replaced by $e\lm,e\mu.$ The claim is proved.\end{proof}

\begin{remark}\label{rem:I.3.14}
As the proof reveals, the theorem holds more generally if $V$ is
any $R$-module and $(\veps_i \ds |i\in I)$  generates
$V.$\end{remark}

\section{The companions of a quadratic form over a tangible supersemifield}\label{sec:I.4}

In this section, we study quadratic forms and their companions over
a  tangible supersemifield, in order to understand quasilinear-rigid
decompositions.

In many arguments we can retreat from a supersemifield~$R$ to a
tangible supersemifield~$R'$ (cf. Remark \ref{rem:I.4.3}) by omitting the ``superfluous'' ghost
elements of ~$R.$ But for categorical reasons we do not always
exclude non-tangible supersemifields from our study.

As in the classical quadratic form theory of fields the
``\textit{square classes}'' of a semiring $R$ will be important
for understanding quadratic forms over $R.$

\begin{defn}\label{defn:I.4.4} We call two elements $x,y$ of a semiring $R$ \bfem{square equivalent},
and write $x\sim_{\sq} y,$ iff there exists a unit $z$ of $R$ \footnote{Then certainly $z \in \tT(R).$} with
$xz^2=y.$  The set $x(R^*)^{2}$
consisting of all $y\in R$ with $x\sim_{\sq}y$ will be called the
\bfem{square class} of $x$ (in $R$). \end{defn}

\begin{remarks}\label{rem:I.4.5}
Assume that $R$ is a supertropical semiring.

\begin{enumerate}
  \item [a)]   Every square class
different from~$\{0\}$ consists either of elements of $\tT(R)$ or
of elements of $\tG(R).$ \pSkip

  \item [b)]
  If two elements of $eR$ are square equivalent in $R$, then they
are square equivalent in~$eR.$ \pSkip

  \item [c)]
 If $R$ is a tangible supersemifield, then the square class of
any $x\in R$ is the set $x(\tT(R))^2.$ Moreover two elements of $eR$, which are square equivalent in $eR$, are square equivalent  in $R.$
\end{enumerate}
 \end{remarks}

\begin{defn}\label{defn:I.4.6} A
\bfem{multiquadratic extension} of a supersemifield~$R$ is a
supersemifield~$R'$ containing~$R$ as a subsemiring such that
$x^2\in R$ for every $x\in R'.$ We call  such  an extension $R' \supset R$ \textbf{\textbf{full}} if $R = \{ x^2 \ds | x \in R ' \}.$ \end{defn}

Given any supertropical semifield $R$, we now indicate how to construct a full multiquadratic  extension   $R' \supset R$. We may  assume that $R$ is a standard supertropical semifield
$$R = \STR(\tT, \tG , \nu) = \tT \sqcup \tG \sqcup \{ 0 \}$$
arising from a group epimorphism $\nu: \tT \twoheadrightarrow \tG$, cf. Example \ref{stropi}. Recall that $\tG = \tG(R)$ is totally ordered.

\begin{lem}\label{lem:I.4.7} If $x,y \in \tG$ and $x < y$ then $x^n < y^n$ for every $n \in \N$.
\end{lem}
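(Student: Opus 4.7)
The plan is to proceed by induction on $n$, using strict monotonicity of multiplication in $\tG$ as the essential ingredient.

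First I would record the basic fact that in $\tG$ multiplication preserves strict inequality: if $a < b$ in $\tG$ and $c \in \tG$, then $ac < bc$. This splits into two observations. Weak monotonicity $a \le b \Rightarrow ac \le bc$ holds because the ordering on $\tG$ is induced from the bipotent semiring structure on $\tG \cup \{0\} = eR$ (namely $a \le b \Leftrightarrow a+b = b$), and this ordering is automatically preserved by multiplication: from $a+b = b$ we get $ac + bc = (a+b)c = bc$. To upgrade this to strict monotonicity, I would invoke cancellativity of the monoid $\tG$: if $ac = bc$ with $c \in \tG$, then $a = b$, so assuming $a < b$ (in particular $a \ne b$) forces $ac \ne bc$, whence $ac < bc$ by totality of the order. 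Note that $c \ne 0$ is automatic since $c \in \tG$ and $0 \notin \tG$, and similarly all powers $x^k, y^k$ stay in $\tG$.

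Granted this, the induction is immediate. The base case $n=1$ is the hypothesis. For the inductive step, suppose $x^{n-1} < y^{n-1}$. Multiplying the hypothesis $x<y$ on the right by $x^{n-1} \in \tG$ gives
\[ x^n \;<\; y\,x^{n-1}, \]
and multiplying the inductive hypothesis $x^{n-1} < y^{n-1}$ on the left by $y \in \tG$ gives
\[ y\,x^{n-1} \;<\; y^n. \]
Chaining the two strict inequalities by transitivity of $<$ yields $x^n < y^n$, completing the induction.

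There is no real obstacle here; the only point requiring care is that one must not confuse weak monotonicity (which holds in any bipotent semiring) with strict monotonicity (which needs cancellativity). Once strict monotonicity is in hand, the result is a one-line induction, and the proof uses neither the supertropical structure of $R$ outside the ghost ideal nor any hypothesis on $\tT$.
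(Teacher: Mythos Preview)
Your proof is correct and follows essentially the same route as the paper: an induction on $n$ that chains two strict inequalities obtained by multiplying $x<y$ and the induction hypothesis by suitable elements of $\tG$. The only difference is cosmetic (you multiply by $x^{n-1}$ on the right and by $y$ on the left, the paper does the opposite, which is immaterial since $\tG$ is abelian), and you spell out more carefully than the paper why multiplication in $\tG$ preserves \emph{strict} inequality via cancellativity.
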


\begin{proof} An easy induction on $n$. Assuming that $n \leq 2$ and $x^{n-1} < y^{n-1}$, we obtain
$$ x^n = x^{n-1} x < x^{n-1}y < y^{n-1} y =  y^n.$$
\end{proof}

We take the divisible hull $\tG_\dv = \Q \otimes_Z \tG.$ Using multiplicative notation we write $\frac 1 n \otimes x = \sqrt[n]{x}$ for $x \in \tG$, $n \in \N.$   Then
$\frac m n \otimes x = \sqrt[n]{x^m}$ for all $m \in \Z$. The total ordering of $\tG$  extends to a total ordering of the group $\tG_\dv$ by the rule
$$ \sqrt[n]{x^m} \leq \sqrt[n]{y^r}  \dss{\Leftrightarrow} x^m \leq y ^ r,$$
as is easily  verified. We focus on the ordered subgroup
$$ \tG^{\hlf}:= \hlf \otimes \tG = \{ z \in \tG_\dv \ds | z^2 \in \tG \} $$
and note that the square map
$$ \tG^{\hlf} \to \tG, \qquad z \mapsto z^2, $$
is an isomorphism of ordered ablian groups with inverse map $z \mapsto \sqrt{z}.$

We further construct an ablian group $\tT' \supset \tT$ with $\tT' / \tT$ of exponent $2$ as follows. We choose a presentation  $\tT = V / U $ with $V$ a free ablian group and $U$ a subgroup of $V$, which then is well known to be also free, cf. e.g. \cite[p. 143]{Kur} or \cite[Thm. 14.5]{Fuchs}. We then  put $\tT': = V' / U$ with
$$ V' :=  \hlf  \otimes V = \{ x \in V_\dv \ds | x^2 \in  V \}. $$
Clearly $\tT$ is a subgroup of $\tT'$ and the square map $\tT' \overset{2}{\longrightarrow} \tT$ is a group epimorphism.

We obtain a commuting square of group epimorphisms
\begin{equation}\label{eq:str}
    \begin{gathered}
    \xymatrix{
  \tT' \ar@{->>}[r]^{\nu'} \ar@{->>}[d]_{2} &  \tG^{\hlf} \ar@{->}[d]^{2}_{\cong}\\
  \tT \ar@{->>}[r]^{\nu} &  \tG}
    \end{gathered} \tag{$*$}
    \end{equation}
by defining $\nu'(z) : = \sqrt{\nu(z^2)}$ for $z \in \tT'$.

The homomorphism $\nu': \tT' \to \tG^{\hlf}$ restricts to   $\nu: \tT \to \tG$, and  thus the supersemifield
$$R' = \STR(\tT', \; \tG^{\hlf} , \; \nu') = \tT' \sqcup \tG^{\hlf} \sqcup \{ 0 \}$$
contains $R$ as a subsemiring. It is now clear from  $(*) $ and the rules in Example \ref{stropi} that $R'$ is a full multiquadratic extension of $R$.

It can be shown that in this way we obtain a cofinal system of \emph{all} full multiquadratic extension of $R$ by choosing different presentations $\tT = V/U$, but we do not need this result at present.

\begin{rem}\label{rem:I.4.5}
The kernel of  $\tT(R') \overset{2}{\longrightarrow} \tT(R)$ for a full multiquadratic extension $R' \supset R$ can be very big. Indeed, starting with  $R' = \STR(\tT', \tG^{\hlf} , \nu')$ as just constructed,  we can form
$$R'' = \STR(H \times \tT', \;  \tG^{\hlf} , \; \nu'') $$
with $H$ any abelian group of exponent $2$ and $\nu'' = \nu' \circ p$, where $p$ is the obvious projection from  $H \times \tT'$ to $\tT'$. Then $R''$ is again a full multiquadratic extension of $R$ and
$$ \ker(\tT(R'') \overset{2}{\longrightarrow} \tT(R)) = H \times \ker(\tT(R') \overset{2}{\longrightarrow} \tT(R)). $$

\end{rem}
\pSkip
\emph{Until the end of this section, \textbf{we fix a tangible
supersemifield} $R.$ We discard the ``trivial'' supersemifields where
$\tG(R)=\{e\}$.}

\begin{notations}\label{notat:I.4.7}
We write $\tT:=\tT(R),$ $\tG:= \tG(R).$ We choose a full multiquadratic
extension of~$R,$ denoted by $R^{1/2},$ and write $\tT^{1/2}:=\tT(R^{1/2}),$
$\tG^{1/2}:=\tG(R^{1/2}).$ Notice  that for every
$x\in\tG$ there is a unique $z\in \tG^{1/2}$ with $z^2=x.$
We denote this unique square root $z$
of~$x\in\tG$ by $\sqrt x.$ (Elements of $\tT$ may have different
square roots in~$\tT^{1/2}.)$\end{notations}

At present the extension $R^{1/2} $ of $R$ will only serve
as a tool to ease notation. Later this will change; then we will
need a precise theory of multiquadratic extensions, to be given in
a sequel of this paper.

\begin{terminology}\label{term:I.4.8}
\quad{}
\begin{enumerate}
    \item[a)] Concerning the totally ordered group $\tG$, we have a
dichotomy:

Either $\tG$ is \bfem{densely ordered}, i.e., for any two elements
$x_1<x_2$ of $\tG$ there exists some $y\in \tG$ with $x_1<y<x_2.$
Then also $eR=\tG\cup\{0\}$ is densely ordered, since $\tG$ does
not have a smallest element.

Or $\tG$ is \bfem{discrete}, i.e., for every $x\in\tG$ there
exists a biggest $x'\in\tG$ with $x'<x.$
For short, we say that  $R$ \bfem{is dense} and
that $R$ \bfem{is discrete}, respectively. \pSkip

\item[b)] If $R$ is discrete, we fix some $\pi\in \tT$ such that
$e\pi$ is the biggest element $z$ of $eR$ with $z<e,$ and call
$\pi$ a \bfem{prime element} of $R.$ \pSkip

\item[c)] Notice that if $R$ is dense, then $R^{1/2}$ also is
dense; while if $R$ is discrete, then $R^{1/2} $ is discrete. In
the discrete case, $\sqrt{e\pi}$ is the biggest element of
$\tG^{1/2}$ smaller than $e.$ Then we choose an element $z$
of $R^{1/2}$ with $z^2=\pi,$ and denote this element $z$ by
$\sqrt{\pi}.$ It is a prime element of $R^{1/2}.$
\end{enumerate}

\end{terminology}

If $V$ is a free module over the tangible supersemifield $R$ with
base $(\veps_i \ds |i\in I)$, and $q:V\to R$ is a quadratic form
on $V,$ we want to determine the sets $C_{i,j}(q)$ defined in
\S\ref{sec:I.3}. Recall that $ C_{i,j}(q)$ coincides with $C_{i,j}(q
\ds| R\veps_i+R\veps_i),$ as observed in \S\ref{sec:I.3} (Remark
\ref{rem:I.3.6}); hence it suffices to study the case $I=\{1,2\}$.

Thus we now focus on the following case: $V$ is free with base
$\veps_1,\veps_2,$ and
\begin{equation}\label{eq:I.4.1}
q=\begin{bmatrix} \al_1 &\al\\
&\al_2\end{bmatrix}\end{equation} for given $\al_1,\al_2,$
$\al\in R.$ (More precisely, $q$ is the functional quadratic
form on $V$ represented by this triangular scheme.)

We know already (\propref{prop:I.3.12}) that
\begin{equation}\label{eq:I.4.2}
C_{i,i}(q)=[0,e\al_i]\qquad (i=1,2).\end{equation} Here, as in
\S\ref{sec:I.3}, we use interval notation with respect to the
minimal ordering of $R;$ namely for $c\in R$
$$[0,c]:=\{x\in R \ds| 0\le x\le c \}.$$

Our problem is to determine $C_{1,2}(q).$ Certainly $\al\in
C_{1,2}(q),$ cf. \propref{prop:I.1.12}.

\begin{lemma}\label{lem:I.4.9}
$C_{1,2}(q)$ is the set of all $\bt\in R$ with
\begin{equation}\label{eq:I.4.3}
\lm\al_1+\lm^{-1}\al_2+\al \ds
=\lm\al_1+\lm^{-1}\al_2+\bt\end{equation} for every $\lm\in
\tT.$\end{lemma}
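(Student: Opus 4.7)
The plan is to start from the characterization of $C_{1,2}(q)$ given in \propref{prop:I.3.2}, then use two reductions: first reduce the quantifier from $\lambda,\mu\in R\sm\{0\}$ to $\lambda,\mu\in\tT$, and then collapse the two-variable condition to a one-variable condition using the fact that $\tT=\tT(R)$ is a multiplicative group (since $R$ is a tangible supersemifield).

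More precisely, by \propref{prop:I.3.2}, an element $\bt\in R$ lies in $C_{1,2}(q)$ iff
$$ \lm^2\al_1+\mu^2\al_2+\lm\mu\al \ds= \lm^2\al_1+\mu^2\al_2+\lm\mu\bt \qquad(*) $$
for all $\lm,\mu\in R\sm\{0\}$. The first reduction is the content already used in the proof of \thmref{thm:I.3.13}: here $R\sm\{0\}=\tT\sqcup e\tT$, so once $(*)$ is known for $\lm,\mu\in\tT$, adding $\lm^2\al_1+\lm\mu\bt$ (resp.\ $\lm^2\al_1+\lm\mu\al$) to both sides of $(*)$ replaces $\lm$ by $e\lm$, symmetrically $\mu$ by $e\mu$, and multiplying $(*)$ by $e$ covers the remaining case. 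Hence $\bt\in C_{1,2}(q)$ iff $(*)$ holds for all $\lm,\mu\in\tT$.

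The second reduction exploits that every $\lm,\mu\in\tT$ is a unit of $R$. Dividing $(*)$ through by the unit $\lm\mu\in\tT$ yields
$$ \lm\mu^{-1}\al_1+\mu\lm^{-1}\al_2+\bt \ds = \lm\mu^{-1}\al_1+\mu\lm^{-1}\al_2+\al. $$
Setting $\nu:=\lm\mu^{-1}\in\tT$, this is precisely the asserted equation \eqref{eq:I.4.3}. Conversely, given the one-variable equation for all $\nu\in\tT$, we recover $(*)$ by substituting $\nu=\lm\mu^{-1}$ and multiplying by the unit $\lm\mu$. Since $\tT$ is a group, the substitution $\nu=\lm\mu^{-1}$ is a bijection $\tT\times\tT\to\tT\times\tT$ (fixing say $\mu$ arbitrary), so the two quantifiers are equivalent.

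I do not expect any real obstacle: both reductions rely on facts already available, namely \thmref{thm:I.3.13} (or rather the manipulation inside its proof) and the group structure of $\tT(R)$ afforded by the tangibility of $R$. The only mild care is to note that $\lm\mu\in\tT$ (closure of $\tT$ under multiplication in a tangible supersemifield), so division by $\lm\mu$ is legitimate.
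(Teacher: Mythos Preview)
Your proof is correct and follows essentially the same route as the paper's: the paper cites Scholium~\ref{schol:I.3.7} together with \thmref{thm:I.3.13} to obtain the two-variable condition over $\tT$, then divides by $\lm\mu$ and substitutes $\lm/\mu$ for $\lm$, exactly as you do. The only cosmetic difference is that you start from \propref{prop:I.3.2} and reproduce the reduction argument inside the proof of \thmref{thm:I.3.13}, whereas the paper simply invokes that theorem as a black box.
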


\begin{proof} By Scholium \ref{schol:I.3.7} and \thmref{thm:I.3.13},
we know that $C_{1,2}(q)$ is the set of all $\bt\in R$ with
$$\lm^2\al_1+\mu^2\al_2+\lm\mu\al \ds = \lm^2 \al_1 +\mu^2\al_2+\lm\mu\bt$$
for all $\lm,\mu\in \tT.$ Dividing by $\lm\mu$, we obtain
condition \eqref{eq:I.4.3}, there with $\lm/\mu$ instead of $\lm;$
hence the claim.
\end{proof}

\begin{prop}\label{prop:I.4.10}
Assume that $\al_1=0$ or $\al_2=0.$ Then
$C_{1,2}(q)=\{\al\}.$\end{prop}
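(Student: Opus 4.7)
The plan is to apply Lemma \ref{lem:I.4.9} and exploit the asymmetry produced by the vanishing coefficient. Observe that swapping the roles of $\veps_1$ and $\veps_2$ replaces the triangular scheme of $q$ by $\begin{bmatrix} \al_2 & \al \\ & \al_1 \end{bmatrix}$ without changing the value of $C_{1,2}(q)=C_{2,1}(q)$; hence I may assume $\al_2=0$. Then Lemma \ref{lem:I.4.9} characterizes $C_{1,2}(q)$ as the set of $\bt\in R$ such that
\[
\lm\al_1+\al \ds = \lm\al_1+\bt \qquad (\forall \lm\in \tT).
\]
The task is to show this equation forces $\bt=\al$, and I will handle two cases on~$\al_1$.

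If $\al_1=0$, the equation collapses to $\al=\bt$ and there is nothing more to do. Thus assume $\al_1\ne 0$, so that $e\al_1\in \tG$. The key input is that the tangible supersemifield $R$ is nontrivial, so $\tG$ is a nontrivial totally ordered abelian group, in particular unbounded below. Therefore, for any prescribed positive ghost value $\g\in\tG$, I can choose a tangible $\lm\in\tT$ with $e\lm\al_1<\g$.

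Applying this with $\g$ equal to (the minimum of) the nonzero elements among $\{e\al, e\bt\}$, I arrange that the ghost of $\lm\al_1$ is strictly smaller than both $e\al$ and $e\bt$ (whichever of them is nonzero). Using the standard supertropical addition rule (if $ex<ey$ then $x+y=y$), the equation above then simplifies on each side: $\lm\al_1+\al$ becomes $\al$ when $\al\ne 0$, and similarly for $\bt$. This yields $\al=\bt$ whenever both are nonzero.

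The main obstacle, and really the only subtlety, is the degenerate subcase where exactly one of $\al$, $\bt$ is zero. If $\al=0$ and $\bt\ne 0$, choosing $\lm$ small enough forces $\lm\al_1=\bt$, which then gives $e\lm\al_1 = e\bt$, contradicting the strict inequality $e\lm\al_1<e\bt$; the symmetric subcase is ruled out the same way. Hence this possibility does not occur, and in every remaining case one arrives at $\bt=\al$. Since $\al\in C_{1,2}(q)$ is automatic from Proposition \ref{prop:I.1.12}, this completes the identification $C_{1,2}(q)=\{\al\}$.
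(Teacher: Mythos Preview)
Your proof is correct and follows essentially the same approach as the paper: reduce to $\al_2=0$, invoke Lemma~\ref{lem:I.4.9} to obtain $\lm\al_1+\al=\lm\al_1+\bt$ for all $\lm\in\tT$, and deduce $\al=\bt$. The paper compresses this last deduction into a single line (with a footnote that nontriviality of $\tG$ is needed), whereas you spell it out by choosing $\lm$ with $e\lm\al_1$ small and treating the degenerate subcases separately; but the underlying idea is identical.
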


\begin{proof}
We may assume that $\al_2=0.$ Now condition \eqref{eq:I.4.3} reads
$$\lm\al_1+\al=\lm\al_1+\bt.$$
If this holds for every $\lm\in \tT,$ then $\al=\bt,$\footnote{Here we need the nontriviality assumption that $\tG \neq\{e \}$. } and we
conclude by \lemref{lem:I.4.9} that $C_{1,2}(q)=\{\al\}.$\end{proof}

In the following, we use the ``\textbf{$\nu$-notation}''.
For $a,b\in R$,
we say that $a$ is $\nu$-\textbf{equivalent} to $b$, and write
$a\cong_\nu b$  if $ea = eb$. We say that $a$ is  $\nu$-\textbf{dominated} by  $b$ (resp. \textbf{strictly} $\nu$-\textbf{dominated}) by $b$, and write $a \nule b$ (resp. $a \nul b$), if $ea \leq eb$ (resp. $ea < eb$).
We say that $a\in R$ is a $\nu$-\textbf{square} (in $R$),
iff $a\cong_\nu b^2$ for some $b\in R.$ \pSkip

We now assume that $\al_1\ne0$ and $\al_2\ne0.$

\begin{convention}\label{conv:I.4.11}
{}\

\begin{enumerate}\item[a)] If $\al_1\al_2$ is a $\nu$-square, we choose some
$\elm \in \tT$ with $\al_1\elm^2\cong_\nu \al_2.$ Otherwise we
choose $\elm \in \tT^{1/2}$ with $\al_1\elm^2\cong_\nu \al_2.$  Thus $\al_1\elm \cong_\nu \elm^{-1}\al_2$ (in
$R^{1/2})$ in both
cases. If $\al_1, \al_2 \in \tT$, we may think of $\elm \al_1 $ as a sort of ``tangible geometric
mean'' of $\al_1$, $\al_2$, since $e\elm \al_1 =\sqrt{e\al_1\cdot
e\al_2}.$ \pSkip

 \item[b)] We distinguish
the following three cases.
\begin{description}
\item[Case I] $\elm \in \tT,$ i.e., $\al_1\al_2$  is a $\nu$-square. \pSkip

\item[Case II] $R$ is dense, $\elm \notin \tT.$ \pSkip

\item[Case III]$R$ is discrete,  $\elm \notin \tT.$ \pSkip
\end{description}

 \item[c)]
In Case III, we choose $\sig,\tau\in \tT$ with $e\tau<e\elm <e\sig$
and with no element of $\tG$ between~$e\tau$ and
$e\sig.$ In other terms, employing the prime element $\pi$ of $R,$
$$\tau\cong_\nu \pi\sig,\qquad \elm \cong_\nu\sqrt{\pi}\sig.$$
Replacing $\sig$ by $\elm ^2\tau^{-1}$, we may assume in addition
(for simplicity) that $\sig
\tau=\elm^2.$\end{enumerate}\end{convention}

\begin{thm}\label{thm:I.4.12}
Assume that $\al^2>_\nu \al_1\al_2.$ Then $C_{1,2}(q)=\{\al\},$
except in the following case: $R$ is discrete, $\al_1\al_2$ is not
a $\nu$-square, and $\al^2\cong_\nu \pi^{-1}\al_1\al_2.$ Then, if
$\al_1\in \tG$ and $\al_2\in\tG,$
\begin{equation}\label{eq:I.4.4}
C_{1,2}(q)=[0,e\al],\end{equation} while, if $\al_1\in \tT$ or
$\al_2\in \tT,$
\begin{equation}\label{eq:I.4.5}
C_{1,2}(q)=\{\bt\in R \ds| \bt\cong_\nu\al\}.\end{equation}
\end{thm}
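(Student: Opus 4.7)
The plan is to invoke \lemref{lem:I.4.9}, which reduces $\bt \in C_{1,2}(q)$ to the collection of supertropical equations
\[
\lm\al_1 + \lm^{-1}\al_2 + \al \; = \; \lm\al_1 + \lm^{-1}\al_2 + \bt \qquad (\lm \in \tT),
\]
and then to analyze each one. Write $s(\lm) := \lm\al_1 + \lm^{-1}\al_2$, so that $es(\lm) = \max(e\lm\cdot e\al_1,\, e\lm^{-1}\cdot e\al_2)$ in the bipotent semifield $eR$. The central dichotomy is: whenever some $\lm \in \tT$ satisfies $es(\lm) < e\al$, the left-hand side at that $\lm$ collapses to $\al$, and a short supertropical case check on $e\bt$ versus $es(\lm)$ forces $\bt = \al$; otherwise the equations merely constrain $e\bt$ and the tangibility of $\bt$.

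Next I would classify by the infimum $\mu := \inf_{\lm \in \tT} es(\lm)$ and compare with $e\al$. In Case~I, $\elm$ itself lies in $\tT$ and realizes $\mu = \sqrt{e\al_1\cdot e\al_2} < e\al$ by the hypothesis $\al^2 >_\nu \al_1\al_2$. In Case~II, the open interval $(e\al_2/e\al,\, e\al/e\al_1)$ of $\tG$ is nonempty (again because $e\al^2 > e\al_1\cdot e\al_2$), and by density contains some $e\lm$, at which both summands of $s(\lm)$ are strictly $\nu$-below $\al$. In Case~III a direct computation with the elements $\sig,\tau$ of Convention~\ref{conv:I.4.11}(c) gives $es(\sig) = es(\tau) = \sqrt{\pi^{-1}\,e\al_1\cdot e\al_2} =: \theta$, realizing $\mu$; since $e\al^2$ is a \emph{square} in $\tG$ strictly greater than the \emph{non-square} $e\al_1\cdot e\al_2$, the discrete structure forces $e\al^2 \geq \pi^{-1}\,e\al_1\cdot e\al_2$. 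If this inequality is strict, the dichotomy applies and yields $\bt = \al$; otherwise $e\al = \theta$, which is precisely the exceptional condition $\al^2 \cong_\nu \pi^{-1}\al_1\al_2$. Thus Cases~I, II, and Case~III with $e\al > \theta$ all give $C_{1,2}(q) = \{\al\}$.

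For the exceptional case I would first observe that the equations at $\lm \notin \{\sig,\tau\}$ (where $es(\lm) > e\al$) impose no new constraint beyond those at $\sig$ and $\tau$. At $\lm = \sig$ the summand $\sig\al_1$ strictly $\nu$-dominates $\sig^{-1}\al_2$, so $s(\sig) = \sig\al_1$, with $es(\sig) = e\al$. The equation $\sig\al_1 + \al = \sig\al_1 + \bt$ then splits into two subcases: if $\al_1 \in \tG$, then $\sig\al_1$ is a ghost of $\nu$-value $e\al$, and the constraint becomes $e\bt \leq e\al$; if $\al_1 \in \tT$, then $\sig\al_1$ is tangible, and matching the ghost/tangible patterns of the two sides forces $e\bt = e\al$ outright. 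The symmetric analysis at $\lm = \tau$ gives the parallel constraint via $\al_2$. Intersecting yields \eqref{eq:I.4.4} when both $\al_i \in \tG$ and \eqref{eq:I.4.5} as soon as one of the $\al_i$ lies in $\tT$.

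The main technical obstacle I anticipate is the tangibility bookkeeping in this last step: once $s(\sig)$, $\al$, and $\bt$ all share the same $\nu$-value, the usual ``dominant term wins'' simplification breaks down, and one must track ghost versus tangible for each term. A small table indexed by the tangibility of $\al_1$ (resp.~$\al_2$) and the position of $e\bt$ relative to $e\al$ will settle this without difficulty. Everything else reduces either to the dichotomy of the first paragraph or to elementary facts about the ordered group $\tG$.
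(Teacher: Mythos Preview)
Your proposal is correct and follows essentially the same route as the paper's own proof: both reduce via \lemref{lem:I.4.9} to the family of equations $\lm\al_1+\lm^{-1}\al_2+\al=\lm\al_1+\lm^{-1}\al_2+\bt$, split into Cases I--III of Convention~\ref{conv:I.4.11}, pick a suitable $\lm$ (namely $\elm$, a $\lm$ just above $\elm$ by density, or $\sig$) to force $\bt=\al$ in the non-exceptional cases, and in the exceptional discrete subcase analyze the equation at $\lm=\sig$ (and symmetrically $\tau$) according to the tangibility of $\al_1,\al_2$. One small omission: your case split via Convention~\ref{conv:I.4.11} tacitly assumes $\al_1,\al_2\neq 0$, so you should first dispose of $\al_1\al_2=0$ by \propref{prop:I.4.10} (or note that your dichotomy already covers it, since then $es(\lm)$ can be made as small as desired).
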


\begin{proof} The case $\al_1\al_2=0$ was covered by
\propref{prop:I.4.10}. Assume now that $\al_1\al_2\ne0.$ We again
rely on \lemref{lem:I.4.9}. If we are in Case I, we insert $\lm=\elm $
in condition \eqref{eq:I.4.3}. Since $\elm \al_1\cong_\nu\elm^{-1}
\al_2$ and $\al>_\nu\elm \al_1,$ we obtain
$$\al=e\elm \al_1+\al\overset!= e\elm \al_1+\bt.$$
This forces $\bt=\al;$ hence $C_{1,2}(q)=\{\al\}.$ If we are in
Case II or III, then no $\lm\in \tT$ is $\nu$-equivalent to
$\elm .$ If $\lm>_\nu\elm $ then $\lm\al_1>_\nu\lm^{-1}\al_2,$ and
condition \eqref{eq:I.4.3} reads
\begin{equation}\label{eq:I.4.6} 
\lm\al_1+\al=\lm\al_1+\bt.\end{equation} If $\lm<_\nu\elm $ then
$\lm\al_1<_\nu\lm^{-1}\al_2,$ and \eqref{eq:I.4.3} reads
\begin{equation}\label{eq:I.4.7}
\lm^{-1}\al_2+\al=\lm^{-1}\al_2+\bt.\end{equation} Assume now that
we are in Case II. Since $R$ is dense, we can choose $\lm\in \tT$
with
$$\elm \al_1<_\nu\lm\al_1<_\nu\al.$$ For such $\lm$
condition \eqref{eq:I.4.6} reads $\al=\lm\al_1+\bt.$ This forces
$\bt=\al,$ and we conclude again that $C_{1,2}(q)=\{\al\}.$

We are left with Case III. Now $\al^2 \ge_\nu \pi^{-1} \al_1 \al_2$, since $e \al_1 \al_2$ is not a square.
We have $\al_1 \al_2 \cong_\nu \elm^2 \al_1 \cong_\nu \pi \sig^2 \al_1^2$; hence $\al^2 \ge_\nu \sig^2 \al_1$, i.e. $\al\ge_\nu\sig\al_1.$
If
$\al>_\nu\sig\al_1,$ i.e., $\al^2>_\nu\pi^{-1}{\al_1\al_2},$ we
insert $\lm=\sig$ into condition \eqref{eq:I.4.6} and obtain
$$\al=\sig\al_1+\al\overset!=\sig\al_1+\bt.$$
This forces $\bt=\al;$ hence $C_{1,2}(q)=\{\al\}.$

Assume finally that  $\al\cong_\nu  \sig\al_1,$
i.e., $\al^2\cong_\nu \pi^{-1}\al_1\al_2.$ Inserting $\lm=\sig$
into \eqref{eq:I.4.6}, we obtain
$$e\sig\al_1=\sig\al_1+\al\overset!=\sig\al_1+\bt.$$
If $\al_1\in \tT,$ this forces
$\bt\cong_\nu\sig\al_1\cong_\nu\al;$ while if $\al_1\in\tG, $ this
only forces $\bt\le_\nu\sig\al_1\cong_\nu\al.$

Thus we have found the   constraints for $\bt\in C_{1,2}(q)$ that
$\bt\cong_\nu\al$ if $\al_1\in \tT,$ resp. $\bt\le_\nu\al$ if
$\al_1\in\tG.$ Of course, these constraints are also valid if
$\al_2\in \tT$, resp. $\al_2\in\tG,$ since we may interchange the
base vectors $\veps_1$ and $\veps_2$ of $V.$

If $\bt\cong_\nu \al,$ it is easily checked that \eqref{eq:I.4.6}
holds for all $\lm\in \tT$ with $\lm\ge\sig,$ and \eqref{eq:I.4.7}
holds for all $\lm\in \tT$ with $\lm\le \tau.$ Thus
$$C_{1,2}(q)=\{\bt\in R \ds| \bt\cong_\nu\al\}$$
if $\al_1\in \tT$ or $\al_2\in \tT.$ If both $\al_1$ and $\al_2$
are ghost and $\bt\le_\nu\al,$ again an easy check reveals that
\eqref{eq:I.4.6} holds for all $\lm \in \tT$ with $\lm\ge \sig,$ and
\eqref{eq:I.4.7} for all $\lm\in \tT$ with $\lm \le\tau.$ We
conclude that now $C_{1,2}(q)=[0,e\al].$
\end{proof}

\begin{thm}\label{thm:I.4.13}

  Assume that $\al^2\le_\nu\al_1\al_2.$

\begin{enumerate}\item[a)] Then
\begin{equation}\label{eq:I.4.8}
C_{1,2}(q)=\{\bt\in R \ds |\bt^2\le_\nu\al_1\al_2\}
\end{equation} except
in the case that $R$ is discrete,
$\al_1\al_2$ is not a $\nu$-square and both $\al_1$ and $\al_2$
are ghost. Then
\begin{equation}\label{eq:I.4.9}
C_{1,2}(q)=\{\bt\in R
\ds|\bt^2\le_\nu\pi^{-1}\al_1\al_2\}.\end{equation}\pSkip

\item[b)] Using Convention \ref{conv:I.4.11}, this means more
explicitly: \pSkip

In Case I: $C_{1,2}(q)=[0,e\elm \al_1]=[0,e\elm^{-1}\al_2].$ \pSkip

In Case II: $C_{1,2}(q)=[0,e\elm \al_1[:=\{\bt\in
R\ds|0\le_\nu\bt<_\nu e\elm \al_1\}.$ \pSkip

In Case III: $C_{1,2}(q)=[0,e\tau\al_1]=[0,e\sig^{-1}\al_2]$ if $\al_1\in \tT$ or $\al_2\in \tT,$
whereas $C_{1,2}(q)=[0,e\sig\al_1]=[0,e\tau^{-1}\al_2]$ if $\al_1\in\tG$ and $\al_2\in \tG.$
\end{enumerate}\end{thm}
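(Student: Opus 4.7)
The plan is to apply Lemma~\ref{lem:I.4.9} and analyse, for each admissible $\lm \in \tT$, the function $f(\lm) := \lm\al_1 + \lm^{-1}\al_2$ and the equation
\begin{equation*}
 f(\lm) + \al \ds = \ f(\lm) + \bt
\end{equation*}
that characterises $C_{1,2}(q)$. The case $\al_1 = 0$ or $\al_2 = 0$ forces $\al = 0$ (from $\al^2 \le_\nu \al_1\al_2$) and reduces immediately via \propref{prop:I.4.10}, so I may assume $\al_1,\al_2 \ne 0$ and consult Convention~\ref{conv:I.4.11}. The crucial observation is that $ef(\lm) = \max(e\lm\al_1,\, e\lm^{-1}\al_2)$ attains its infimum $c := e\elm\al_1 = \sqrt{e\al_1\al_2}$ precisely at $\lm \cong_\nu \elm$, and the hypothesis gives $e\al \leq c$.

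First I would handle \textbf{Case I} ($\elm \in \tT$): plug $\lm = \elm$ into the equation, whereupon $\elm\al_1 \cong_\nu \elm^{-1}\al_2$, so $f(\elm) = e\elm\al_1$ is a ghost absorbing $\al$; hence we are forced to $\bt \nule e\elm\al_1$. For any other $\lm$, $ef(\lm) > c \geq e\al, e\bt$, so strict $\nu$-domination shows the equation holds automatically. Thus $C_{1,2}(q) = [0, e\elm\al_1]$, and the condition $\bt^2 \nule \al_1\al_2$ is equivalent (squaring is monotone on the bipotent $eR$) to $e\bt \le c$. Next, \textbf{Case II} ($R$ dense, $\elm \notin \tT$): $c \notin eR$, and by density one finds $\lm \in \tT$ with $ef(\lm)$ arbitrarily close to $c$ from above. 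Any $\bt$ with $e\bt < c$ satisfies $\bt \nul f(\lm)$ for all $\lm$, so the equation holds by $\nu$-domination; conversely, $e\bt \geq c$ is impossible in $eR$. Hence $C_{1,2}(q) = [0, e\elm\al_1[$, matching $\bt^2 \nule \al_1\al_2$ since $c \notin eR$.

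For \textbf{Case III} (discrete, $\elm \notin \tT$) I would exploit the identity $\sig\tau = \elm^2$, which gives $e\sig\al_1 = e\tau^{-1}\al_2$ and $e\tau\al_1 = e\sig^{-1}\al_2$, with $e\tau\al_1 < c < e\sig\al_1$ and no intermediate $\nu$-values in $eR$. The extremal values of $f$ on $\tT$ are attained at $\lm = \sig$, giving $f(\sig) = \sig\al_1 + \sig^{-1}\al_2 = \sig\al_1$ since $\sig\al_1 \nug \sig^{-1}\al_2$, and at $\lm = \tau$, giving $f(\tau) = \tau^{-1}\al_2$; for all other $\lm$ the $\nu$-domination is even stronger, so only $\lm \in \{\sig,\tau\}$ impose constraints. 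The tangibility of $f(\sig) = \sig\al_1$ depends on $\al_1$, and that of $f(\tau) = \tau^{-1}\al_2$ on $\al_2$. If at least one of $f(\sig), f(\tau)$ is tangible (i.e. $\al_1 \in \tT$ or $\al_2 \in \tT$), then the equation $f(\lm) + \al = f(\lm) + \bt$ at that $\lm$ forces the strict bound $e\bt < e\sig\al_1$: any $e\bt = e\sig\al_1$ would make $f(\lm) + \bt$ ghost while $f(\lm) + \al$ is tangible. In discrete $eR$, $e\bt < e\sig\al_1$ means $\bt \nule e\tau\al_1$, yielding $[0, e\tau\al_1]$. If instead both $\al_1,\al_2 \in \tG$, both $f(\sig)$ and $f(\tau)$ are already ghost, and $f(\lm) + \bt = f(\lm)$ as soon as $e\bt \le ef(\lm) = e\sig\al_1$; this yields the larger interval $[0, e\sig\al_1]$.

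Finally, to derive (a), I would translate the bounds into the form $\bt^2 \nule \al_1\al_2$ versus $\bt^2 \nule \pi^{-1}\al_1\al_2$. In Cases~I and II the bound is $c = \sqrt{e\al_1\al_2}$, giving $\bt^2 \nule \al_1\al_2$; in Case~III with $\al_1$ or $\al_2$ tangible the bound $e\tau\al_1$ satisfies $(e\tau\al_1)^2 = e\sig^{-1}\tau\al_1\al_2 \cdot (e\tau/e\sig^{-1}) $ — concretely, using $\sig\tau = \elm^2$ and $e\elm^2 = e\al_2/e\al_1$, one verifies $(e\tau\al_1)^2 < e\al_1\al_2 < (e\sig\al_1)^2 = e\pi^{-1}\al_1\al_2$, so in $eR$ (discrete) the conditions $e\bt \le e\tau\al_1$ and $(e\bt)^2 \le e\al_1\al_2$ coincide. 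In the exceptional sub-case both ghost, the corresponding computation identifies $[0,e\sig\al_1]$ with $\{\bt : \bt^2 \nule \pi^{-1}\al_1\al_2\}$. The main obstacle I anticipate is the careful bookkeeping in Case~III, in particular showing that $\lm \notin \{\sig,\tau\}$ impose no constraint and that the strict-versus-non-strict bound is dictated exactly by the tangibility of $f(\sig)$ or $f(\tau)$; this pivots on the precise absorption behaviour in a supertropical sum of a tangible with another element of equal $\nu$-value.
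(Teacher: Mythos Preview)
Your approach is essentially the paper's: both invoke Lemma~\ref{lem:I.4.9}, analyse $f(\lm)=\lm\al_1+\lm^{-1}\al_2$ with its $\nu$-infimum $c=e\elm\al_1$, and run the Case I/II/III split of Convention~\ref{conv:I.4.11}, with Case~III decided by the tangibility of $f(\sig)=\sig\al_1$ and $f(\tau)=\tau^{-1}\al_2$. Two points need tightening: in Case~II, ``$e\bt\ge c$ is impossible in $eR$'' only excludes $e\bt=c$, and you must also argue (as the paper does, equally tersely) that the constraints $\bt\nule\lm\al_1$ for all $\lm\nug\elm$ together with density force $e\bt<c$; and in your derivation of (a), the garbled computation should read $(e\tau\al_1)^2=e\pi\,\al_1\al_2$ and $(e\sig\al_1)^2=e\pi^{-1}\al_1\al_2$, whence in the discrete non-square case $e\bt\le e\tau\al_1$ is equivalent to $(e\bt)^2\le e\al_1\al_2$ because $e\pi\al_1\al_2$ is the predecessor of the non-square $e\al_1\al_2$.
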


\begin{proof}
Now $\al\le_\nu\elm \al_1 \cong_\nu\elm^{-1}\al_2.$ We again
exploit \lemref{lem:I.4.9}. If $\lm\in \tT$ and $\lm>_\nu\elm ,$ then
condition \eqref{eq:I.4.3} there reads
\begin{equation}\lm\al_1=\lm\al_1+\bt,\tag{\ref{eq:I.4.6}$'$}\end{equation}
and if $\lm\in T,$ $\lm<_\nu \elm $, then \eqref{eq:I.4.3} reads
\begin{equation}\lm^{-1}\al_2=\lm^{-1}\al_2+\bt.\tag{\ref{eq:I.4.7}$'$}\end{equation}

Assume first that we are in Case I. Inserting $\lm\in T$ with
$\lm\cong_\nu\elm $ into \eqref{eq:I.4.3}, we obtain for $\bt\in
C_{1,2}(q)$ the necessary condition
$$e\elm \al_1=e\elm \al_2+\bt,$$
which means that $\bt\le_\nu\elm \al_1.$ This constraint for $\bt$
implies that for $\lm{}\!\!>_\nu\elm $ we have $\lm\al_1>_\nu\bt;$
hence (\ref{eq:I.4.6}$'$) holds, while for $\lm<_\nu\elm $ we have
$\lm^{-1}\al_2>_\nu\bt,$ and hence (\ref{eq:I.4.7}$'$) holds. Thus
$\bt\le_\nu\elm \al_1$ implies that $\bt\in C_{1,2}(q).$ We
conclude that in Case I
$$C_{1,2}(q)=[0,e\elm \al_1]=[0,e\elm^{-1}\al_2].$$
In Case II $(R$ dense), condition (7.6$'$) for all $\lm\in \tT$
with $\lm>_\nu\elm $ forces $\bt\le_\nu\elm \al_1$, and hence $\bt<_\nu
\elm \al_1$, while in Case III we obtain the constraint $\bt\le_\nu
\sig\al_1.$

Assume that $\bt<_\nu\elm \al_1.$ Then (\ref{eq:I.4.6}$'$) holds for
all $\lm>_\nu\elm .$ If $\lm<_\nu\elm $ and $\lm\in \tT$ we have
$\lm^{-1}\al_2>_\nu\elm^{-1}\al_2=\elm \al_1>_\nu\bt;$ hence
(\ref{eq:I.4.7}$'$) is valid. Thus $\bt\in C_{1,2}(q).$ We conclude
that in Case II,
$$C_{1,2}(q)=[0,\elm \al_1[.$$

We turn to Case III and assume the constraint
$\bt\le_\nu\sig\al_1.$ For $ \lm>_\nu\sig$ condition (\ref{eq:I.4.6}$'$) is
evident. For $\lm \cong_\nu \sig$ condition (\ref{eq:I.4.6}$'$) gives the
constraint
$$\sig\al_1\overset!=\sig\al_1+\bt.$$
If $\al_1\in\tG,$ this holds. If $\al_1\in \tT,$ we obtain the
stronger constraint
$$\bt<_\nu\sig\al_1,\quad \text{i.e.,}\quad
\bt\le_\nu\tau \al_1.$$ Now assume that $\lm\in \tT$ and
$\lm<_\nu\elm .$ We have $\lm\le_\nu\tau,$ hence
$$\lm^{-1}\al_2 \ge_\nu\tau^{-1}\al_2\cong_\nu \sig\al_1\ge_\nu\bt.$$
If $\al_2\in\tG,$ this implies (\ref{eq:I.4.7}$'$); while if $\al_2\in \tT,$
we obtain the constraint
$\bt<_\nu\tau^{-1}\al_2\cong_\nu\sig\al_1,$ i.e., again
$\bt\le_\nu\tau\al_1.$ We conclude that if $\al_1\in\tG$ and
$\al_2\in\tG,$ then
$$C_{1,2}(q)=[0,\sig\al_1]=[0,\tau^{-1}\al_2];$$
while if $\al_1\in \tT$ or $\al_2\in \tT,$ then
$$C_{1,2}(q)=[0,e\tau\al_1]=[0,e\sig^{-1}\al_2].$$
We have proved part b) of the theorem. It is immediate that this
is equivalent to part a).
\end{proof}

We state three consequences of Proposition \ref{prop:I.4.10} and Theorems \ref{thm:I.4.12} and
\ref{thm:I.4.13}.

\begin{cor}\label{cor:I.4.14}
Let $\al_1,\al_2,$ $\al\in R.$ The quadratic form
$$q=\begin{bmatrix} \al_1 &\al\\
 &\al_2\end{bmatrix}$$ is quasilinear iff either
$\al^2\le_\nu\al_1\al_2,$ or $R$ is discrete where
$\al^2\cong_\nu\pi^{-1}\al_1\al_2$ and  both $\al_1,\al_2$ are
ghost.
\end{cor}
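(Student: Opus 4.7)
The plan is to reduce the corollary to a question about whether $0 \in C_{1,2}(q)$, and then mechanically case-split using the two big theorems \ref{thm:I.4.12} and \ref{thm:I.4.13} (together with Proposition \ref{prop:I.4.10} for the degenerate subcase).

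First I would note that, by definition, $q$ is quasilinear iff the null bilinear form is a companion of $q$, equivalently (for $V$ free with base $\veps_1,\veps_2$) iff $0 \in C_{i,j}(q)$ for every pair $(i,j)$. The diagonal conditions $0 \in C_{i,i}(q)$ hold automatically, since Proposition~\ref{prop:I.3.12} gives $C_{i,i}(q) = [0, e\al_i]$. Thus the whole question collapses to determining when $0 \in C_{1,2}(q)$.

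Next I would dispose of the degenerate case $\al_1=0$ or $\al_2=0$ via Proposition~\ref{prop:I.4.10}: there $C_{1,2}(q)=\{\al\}$, so $0\in C_{1,2}(q)$ iff $\al=0$, which (since $R$ is a tangible supersemifield) is iff $\al^2 \nule 0 = \al_1\al_2$. This falls under the first alternative of the corollary. (The ``discrete/ghost'' alternative cannot arise here since $\al_1\al_2=0$ is trivially a $\nu$-square.)

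Then, assuming $\al_1\ne 0$ and $\al_2 \ne 0$, I split on whether $\al^2 \nug \al_1\al_2$ or $\al^2 \nule \al_1\al_2$. In the first regime I invoke Theorem~\ref{thm:I.4.12}: generically $C_{1,2}(q)=\{\al\}$ with $\al \neq 0$ (as $\al^2 \nug \al_1\al_2 \ne 0$), so $0\notin C_{1,2}(q)$; the only exception is the discrete subcase with $\al_1\al_2$ not a $\nu$-square and $\al^2 \cong_\nu \pi^{-1}\al_1\al_2$, where $C_{1,2}(q)=[0,e\al]$ precisely when both $\al_1,\al_2\in\tG$ (if one of them is tangible, $C_{1,2}(q)=\{\bt\nucong\al\}$ still excludes $0$). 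This is exactly the second alternative of the corollary. In the second regime I invoke Theorem~\ref{thm:I.4.13}: in all three cases (I, II, III) the set $C_{1,2}(q)$ is of the form $[0,\cdot]$ or $[0,\cdot[$, so $0\in C_{1,2}(q)$ automatically, matching the first alternative.

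Collecting the three case analyses yields the biconditional of the corollary. There is no real obstacle here beyond organized bookkeeping, since all the heavy lifting was done in Proposition~\ref{prop:I.4.10} and Theorems~\ref{thm:I.4.12}, \ref{thm:I.4.13}; the only subtle point is remembering that in the exceptional discrete regime of Theorem~\ref{thm:I.4.12} it is the ghost/tangible distinction of $\al_1,\al_2$ that decides whether $0$ belongs to $C_{1,2}(q)$, and this is what produces the second clause in the corollary's characterization.
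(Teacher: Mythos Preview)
Your proposal is correct and follows essentially the same route as the paper's proof: reduce quasilinearity to $0\in C_{1,2}(q)$ and then read this off from Proposition~\ref{prop:I.4.10} and Theorems~\ref{thm:I.4.12}, \ref{thm:I.4.13}. The only thing the paper adds that you leave implicit is the parenthetical observation that the hypothesis $\al^2\cong_\nu\pi^{-1}\al_1\al_2$ (with $R$ discrete) already forces $\al_1\al_2$ not to be a $\nu$-square, so that the corollary's second alternative really does land in the exceptional case of Theorem~\ref{thm:I.4.12}; you should state this explicitly to close the biconditional cleanly.
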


\begin{proof}
$q$ is quasilinear iff $q$ is quasilinear at $(\veps_1,\veps_2)$,
iff $0\in C_{1,2}(q).$ The assertion can be read off from Proposition \ref{prop:I.4.10} and Theorems
\ref{thm:I.4.12} and \ref{thm:I.4.13}. (Observe that if $R$ is discrete
and $\al^2\cong_\nu\pi^{-1}\al_1\al_2,$ then $\al_1\al_2$ is not a
$\nu$-square, and hence we are in Case III.)
\end{proof}

\begin{cor}\label{cor:I.4.15}
Assume that $\al_1,\al_2,$ $\al\in R$, where
$\al^2\le_\nu\al_1\al_2 $ if $R$ is dense, and
\\ $\al^2\le_\nu\pi^{-1}\al_1\al_2$ if $R$ is discrete. Then the
triangular schemes
$$\begin{bmatrix}
\al_1&\al\\
 &\al_2\end{bmatrix},\qquad\begin{bmatrix}\al_1 & e\al\\
& \al_2\end{bmatrix}$$ represent the same quadratic form on
$V=R\veps_1+R\veps_2.$
\end{cor}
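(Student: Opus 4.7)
The plan is to recognize the stated equality of functional quadratic forms as the single membership assertion $e\al \in C_{1,2}(q)$, where $q$ denotes the functional form attached to the first triangular scheme $\begin{bmatrix}\al_1 & \al\\ & \al_2\end{bmatrix}$. The conclusion then follows by direct case analysis using Theorems~\ref{thm:I.4.12} and~\ref{thm:I.4.13}.

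Take $b_0$ to be the balanced companion of $q$ described in Proposition~\ref{prop:I.1.12}, so that $b_0(\veps_1,\veps_2)=\al$. By Proposition~\ref{prop:I.3.2} applied at the pair $(\veps_1,\veps_2)$, a scalar $\bt \in R$ lies in $C_{1,2}(q)$ exactly when
$$\lm^2\al_1+\mu^2\al_2+\lm\mu\cdot\bt \;=\; \lm^2\al_1+\mu^2\al_2+\lm\mu\cdot\al$$
holds for every $\lm,\mu \in R\sm\{0\}$. Setting $\bt=e\al$, this identity says precisely that the two functions associated with the two triangular schemes agree on $\{\lm\veps_1+\mu\veps_2 \ds | \lm,\mu\ne 0\}$; their agreement when $\lm=0$ or $\mu=0$ is automatic. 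Hence the corollary is equivalent to the single claim that $e\al \in C_{1,2}(q)$.

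In the dense case, $(e\al)^2 = e\al^2\le_\nu \al_1\al_2$ by hypothesis, and Theorem~\ref{thm:I.4.13}(a) gives $e\al \in C_{1,2}(q)=\{\bt \ds |\bt^2\le_\nu \al_1\al_2\}$. In the discrete case, observe that $e\pi^{-1}\al_1\al_2$ is the immediate successor of $e\al_1\al_2$ in the discretely ordered group $\tG$, so the hypothesis $\al^2\le_\nu \pi^{-1}\al_1\al_2$ splits exhaustively into $\al^2\le_\nu \al_1\al_2$ (treated as in the dense case) or $\al^2\cong_\nu \pi^{-1}\al_1\al_2$. In the latter subcase $\al_1\al_2$ cannot be a $\nu$-square, for otherwise $e\al^2 = e\pi^{-1}\cdot e\al_1\al_2$ would force $e\pi^{-1}$ to be a $\nu$-square in $eR$, producing a ghost element strictly between $e$ and $e\pi^{-1}$ and contradicting the discreteness of $\tG$. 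Thus we land in the exceptional branch of Theorem~\ref{thm:I.4.12}, and $C_{1,2}(q)$ is either $[0,e\al]$ (if both $\al_1,\al_2$ are ghost) or $\{\bt \ds |\bt\cong_\nu\al\}$ (if one of them is tangible); in either case $e\al$ belongs to it. The degenerate case $\al_1\al_2=0$ is absorbed painlessly, since the hypothesis then forces $\al=0$ and the two schemes coincide literally. The main obstacle is the careful branch management in the discrete case and the verification of this non-$\nu$-square claim; everything else is a direct application of the structural theorems already established.
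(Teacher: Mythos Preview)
Your proof is correct and follows the same approach as the paper: reduce the assertion to the membership $e\al\in C_{1,2}(q)$ and then read this off from Theorems~\ref{thm:I.4.12} and~\ref{thm:I.4.13}, with your dichotomy and non-$\nu$-square verification in the discrete case being exactly what the paper compresses into the parenthetical remark ``if $R$ is discrete, then either $\al^2\le_\nu\al_1\al_2$, or we are in Case~III.'' One small gloss: when you write ``treated as in the dense case'' for the discrete subcase $\al^2\le_\nu\al_1\al_2$, note that Theorem~\ref{thm:I.4.13}(a) there has an exceptional branch (both $\al_i$ ghost, $\al_1\al_2$ not a $\nu$-square) in which $C_{1,2}(q)=\{\bt\mid\bt^2\le_\nu\pi^{-1}\al_1\al_2\}$ --- but this set is only larger, so $e\al$ still lies in it and your conclusion stands.
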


\begin{proof}
Let $q$ be the functional form represented
by
$\left[\begin{smallmatrix}\al_1&\al\\
&\al_2\end{smallmatrix}\right] .$ The triangular scheme $\left[\begin{smallmatrix}\al_1&e\al\\
&\al_2\end{smallmatrix}\right] $ represents $q$ iff $e\al\in
C_{1,2}(q).$ The claim can again be read off from Theorems
\ref{thm:I.4.12} and \ref{thm:I.4.13}. (Observe that if $R$ is discrete,
then either $\al^2\le_\nu\al_1\al_2,$ or we are in Case III.)
\end{proof}

N.B. A big part of \corref{cor:I.4.15} is obvious from Corollaries
\ref{cor:I.4.14} and \ref{cor:I.3.10}.

\begin{cor}\label{cor:I.4.16}
Let $\al_1,\al_2,$ $\al\in R$ and
$q=\left[\begin{smallmatrix}\al_1&\al\\
&\al_2\end{smallmatrix}\right] .$
\begin{enumerate}\item[a)] When $R$ is dense, then $q$ is rigid at
$(\veps_1,\veps_2)$ iff $\al_1\al_2<_\nu\al^2.$ \pSkip
\item[b)] When $R$ is discrete, then $q$ is rigid at
$(\veps_1,\veps_2)$ iff $\al_1\al_2<_\nu\pi\al^2.$\end{enumerate}
\end{cor}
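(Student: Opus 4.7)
The plan is to derive the corollary directly from Theorems~\ref{thm:I.4.12} and~\ref{thm:I.4.13} (with Proposition~\ref{prop:I.4.10} covering the edge case $\al_1\al_2 = 0$). Recall that by Corollary~\ref{cor:I.3.4} and Definition~\ref{defn:I.2.10}, $q$ is rigid at $(\veps_1,\veps_2)$ precisely when $C_{1,2}(q)$ is a singleton. So I want to trace through the descriptions of $C_{1,2}(q)$ in these two theorems and identify when the set collapses to $\{\al\}$.

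First, if $\al^2 \le_\nu \al_1\al_2$ (with $\al_1\al_2 \neq 0$), Theorem~\ref{thm:I.4.13} explicitly identifies $C_{1,2}(q)$ as a nondegenerate interval of the form $[0,e\elm\al_1]$ or $[0,e\elm\al_1[$, etc., which in particular contains both $0$ and a nonzero element, so $q$ is not rigid at $(\veps_1,\veps_2)$. In the dense case this matches the negation $\al^2\le_\nu\al_1\al_2$ of the condition $\al_1\al_2<_\nu\al^2$. In the discrete case, $\al^2\le_\nu\al_1\al_2$ certainly implies $\al_1\al_2 \ge_\nu \al^2 >_\nu \pi\al^2$ (since $e\pi<e$), so $\al_1\al_2<_\nu\pi\al^2$ also fails. (The edge case $\al_1\al_2 = 0$ is handled by Proposition~\ref{prop:I.4.10}, which forces $C_{1,2}(q)=\{\al\}$, and in that case $\al^2>_\nu\al_1\al_2$ holds iff $\al\neq 0$ iff the corollary's condition holds.)

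Next, assume $\al^2 >_\nu \al_1\al_2$. Theorem~\ref{thm:I.4.12} says $C_{1,2}(q) = \{\al\}$ unless we are in the exceptional case: $R$ is discrete, $\al_1\al_2$ is not a $\nu$-square, and $\al^2 \cong_\nu \pi^{-1}\al_1\al_2$. For part~(a) this exceptional case never arises because $R$ is assumed dense, so $q$ is rigid at $(\veps_1,\veps_2)$ iff $\al^2>_\nu\al_1\al_2$, i.e., iff $\al_1\al_2<_\nu\al^2$, as claimed.

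The substantive step is part~(b). I want to show that under discreteness, ``$\al^2 >_\nu \al_1\al_2$ and we are not in the exceptional case'' is equivalent to $\al_1\al_2 <_\nu \pi\al^2$. One direction is immediate: $\al_1\al_2 <_\nu \pi\al^2$ implies $\al_1\al_2 <_\nu \al^2$ since $e\pi<e$. Conversely, assume $\al^2 >_\nu \al_1\al_2$ but $\al_1\al_2 \not<_\nu \pi\al^2$, so $e\pi\al^2 \le e\al_1\al_2 < e\al^2$; since $e\pi$ is by definition the maximal element of $eR$ below $e$, multiplying by $e\al^2$ shows $e\pi\al^2$ and $e\al^2$ are consecutive in $eR$, forcing $e\al_1\al_2 = e\pi\al^2$, i.e., $\al^2 \cong_\nu \pi^{-1}\al_1\al_2$. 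To complete the equivalence I must also verify that $\al_1\al_2$ is automatically not a $\nu$-square in this situation, so that the exceptional case's last clause imposes no extra condition. This reduces to the auxiliary claim that $e\pi$ itself is not a $\nu$-square in $eR$: if $y\in eR$ with $y^2=e\pi$, then $y<e$ (else $y^2\ge e>e\pi$), and $y\le e\pi$ gives $y^2\le e\pi^2<e\pi$ (using $e\pi<e$), while $e\pi<y<e$ contradicts the maximality of $e\pi$. Hence $e\pi$ is not a square in $eR$, so with $\al\neq 0$ (guaranteed by $\al^2>_\nu\al_1\al_2\ge 0$), the equality $e\al_1\al_2 = e\pi\cdot(e\al)^2$ cannot express $\al_1\al_2$ as a $\nu$-square. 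Thus the exceptional case is exactly characterized by $\al_1\al_2\cong_\nu\pi\al^2$, and the equivalence follows.

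The only genuine obstacle is the small structural lemma that $e\pi$ is not a $\nu$-square; everything else is bookkeeping about which regime of Theorems~\ref{thm:I.4.12}, \ref{thm:I.4.13} applies and how ``not exceptional'' fits with the strict inequality $\al_1\al_2<_\nu\pi\al^2$.
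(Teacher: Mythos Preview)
Your proposal is correct and follows essentially the same approach as the paper, which simply observes (in one line) that by Proposition~\ref{prop:I.4.10} and Theorems~\ref{thm:I.4.12}, \ref{thm:I.4.13} one has $C_{1,2}(q)=\{\al\}$ precisely under the stated conditions. Your version spells out the case analysis in detail and makes explicit the auxiliary fact, left tacit in the paper, that $e\pi$ is never a square in $eR$, so that in the discrete case $\al^2\cong_\nu\pi^{-1}\al_1\al_2$ already forces $\al_1\al_2$ to be a $\nu$-nonsquare.
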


\begin{proof}
Browsing through Proposition \ref{prop:I.4.10} and Theorems \ref{thm:I.4.12} and \ref{thm:I.4.13}, we see
that \\ $C_{1,2}(q)=\{\al\} $ precisely in these cases.\end{proof}

\section{A closer study of $\Rig(q)$ }\label{sec:I.5}

Throughout this section, \textit{we assume that the semiring $R$ is
u.b.} (cf. Definition \ref{defn:I.ub.1}) and satisfies
Remark~\ref{assump}, and we examine the possible quasilinear-rigid
decompositions, cf.~Definition~\ref{defn:I.5.3}. We have a natural
partial ordering on the module of $R$-valued functions $\Quad(V)$ as
follows: If $q,q'\in\Quad(V)$, then
\begin{equation}\label{eq:I.5.14}
q\le q' \dss \Leftrightarrow \forall x\in V: q(x)\le q'(x).
\end{equation}
This ordering is compatible with addition and scalar multiplication.

The ordering \eqref{eq:I.5.14} leads to a new characterization of
quasilinear parts.

\begin{lemma}\label{lem:I.5.10}
If $q,q'$ are quasilinear forms on $V$ with $q\le q',$ then
$(q)_{\QL}\le (q')_{\QL}.$
\end{lemma}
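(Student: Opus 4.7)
My plan is to deduce the lemma directly from the uniqueness assertion in Proposition~\ref{prop:I.5.2}. The key observation is that a quasilinear form is its own quasilinear part, which reduces the statement to the hypothesis.

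First, I would note that if $q$ is quasilinear on $V$, then $q = q + 0$ already exhibits a quasilinear-rigid decomposition: $q$ itself is quasilinear by assumption, and the zero form is trivially rigid (it vanishes on every basis element $\veps_i$, so Theorem~\ref{thm:I.2.13} applies). By the uniqueness of the quasilinear part in Proposition~\ref{prop:I.5.2} (equivalently, by applying formula~\eqref{eq:I.5.3} to $q$ itself and using $q(\veps_i) = q_{\QL}(\veps_i)$), we obtain $q_{\QL} = q$. The same reasoning applied to $q'$ gives $q'_{\QL} = q'$. The hypothesis $q \le q'$ with respect to the pointwise ordering~\eqref{eq:I.5.14} on $\Quad(V)$ then yields $q_{\QL} = q \le q' = q'_{\QL}$ immediately.

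There is essentially no technical obstacle; the lemma is really the remark that the map $q \mapsto q_{\QL}$ restricts to the identity on the submodule $\QL(V) \subset \Quad(V)$ and is therefore trivially order-preserving there. I anticipate that the subsequent results in this section will extend this monotonicity to arbitrary $q, q' \in \Quad(V)$, where the statement becomes genuinely substantive and the proof will lean on the explicit formula~\eqref{eq:I.5.3} for $q_{\QL}$ together with the compatibility of the minimal ordering with addition and scalar multiplication guaranteed by the u.b.\ hypothesis (Proposition~\ref{prop:I.ub.5}).
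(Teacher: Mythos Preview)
Your argument is correct for the statement as literally written, but it differs from the paper's proof and is worth comparing. The paper does not use that a quasilinear form equals its own quasilinear part; instead it invokes the explicit formula~\eqref{eq:I.5.3},
\[
q_{\QL}(x)=\sum_i x_i^2\,q(\veps_i)\ \le\ \sum_i x_i^2\,q'(\veps_i)=q'_{\QL}(x),
\]
using only $q(\veps_i)\le q'(\veps_i)$ and the compatibility of the minimal ordering with sums and products. That argument nowhere uses the quasilinearity of $q$ or $q'$; it applies verbatim to arbitrary quadratic forms. Indeed, the word ``quasilinear'' in the lemma's hypothesis is almost certainly a slip for ``quadratic'': the very next result, Theorem~\ref{thm:I.5.11}, applies the lemma to a pair $(\chi,q)$ with $\chi$ quasilinear but $q$ an \emph{arbitrary} quadratic form, which your proof does not cover while the paper's does. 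Your final paragraph already identifies exactly this general argument; you might as well give it as the proof, since it is no harder and is what is actually needed downstream.
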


\begin{proof}
For $x=\sum\limits_i x_i\veps_i\in V$ we have
$$q_{\QL}(x) \ds =\sum_ix_i^2q(\veps_i) \dss \le\sum_ix_i^2q'(\veps_i) \ds =q_{\QL}'(x),$$
due to formula \eqref{eq:I.5.3}.
\end{proof}

\begin{thm}\label{thm:I.5.11}
If $q$ is any quadratic form on $V,$ then $q_{\QL}$ is the unique
maximal quasilinear form $\ql$ on $V$ with $\ql\le q;$ in more
explicit terms, $q_{\QL}\le q$ and $\chi\le q_{\QL}$ for every
quasilinear form $\chi\le q.$\end{thm}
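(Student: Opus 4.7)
The plan is to establish the two parts of the characterization separately, then deduce uniqueness.

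First I would verify that $q_{\QL} \le q$. Fix any rigid complement $\rg \in \Rig(q)$, which exists by \propref{prop:I.5.1}, giving $q = q_{\QL} + \rg$. For each $x \in V$ the equation $q_{\QL}(x) + \rg(x) = q(x)$ exhibits $q_{\QL}(x) \le_R q(x)$ by the very definition \eqref{eq:I.ub.1} of the minimal ordering. Hence $q_{\QL}\le q$ pointwise, i.e.\ in the ordering on $\Quad(V)$.

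Next, take any quasilinear $\chi$ on $V$ with $\chi \le q$. Evaluating at a base vector gives $\chi(\veps_i) \le q(\veps_i)$, and formula \eqref{eq:I.5.3} shows that $q_{\QL}(\veps_i) = q(\veps_i)$, so $\chi(\veps_i) \le q_{\QL}(\veps_i)$ for every $i \in I$. Pick $c_i \in R$ with $\chi(\veps_i) + c_i = q_{\QL}(\veps_i)$. Since $\chi$ is quasilinear, formula \eqref{eq:I.5.6} yields, for $x=\sum_i x_i\veps_i$,
$$\chi(x) + \sum_i x_i^2 c_i \ds = \sum_i x_i^2\bigl(\chi(\veps_i)+c_i\bigr) \ds = \sum_i x_i^2 \, q_{\QL}(\veps_i) \ds = q_{\QL}(x),$$
using \eqref{eq:I.5.3} in the last step. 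Thus $\chi(x)\le_R q_{\QL}(x)$ for every $x$, so $\chi \le q_{\QL}$. (This is really the content of \lemref{lem:I.5.10}, whose proof in fact needs no assumption beyond formula \eqref{eq:I.5.3} and monotonicity of addition.)

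Combining the two items, $q_{\QL}$ is a quasilinear form $\le q$ which dominates every other quasilinear form $\le q$; antisymmetry of the partial order on $\Quad(V)$ (inherited from $\le_R$, which is a partial order by the u.b.\ hypothesis on $R$) then forces $q_{\QL}$ to be the unique such maximum. The only mildly delicate point is the second paragraph, where I must make sure that the ordering relation $\chi(\veps_i)\le q(\veps_i)$ on base vectors lifts to an inequality of full forms; this lift is what the identity $q_{\QL}(\veps_i)=q(\veps_i)$ and the quasilinearity of $\chi$ provide, via the explicit decomposition above.
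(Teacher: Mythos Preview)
Your proof is correct and follows essentially the same approach as the paper: the paper's proof simply invokes \lemref{lem:I.5.10} to conclude $\chi=\chi_{\QL}\le q_{\QL}$ for any quasilinear $\chi\le q$, and you have effectively inlined that lemma's argument (as you yourself note). The only difference is cosmetic: you exhibit an explicit witness $\sum_i x_i^2 c_i$ for the inequality $\chi(x)\le q_{\QL}(x)$, whereas the paper's lemma uses directly the compatibility of $\le_R$ with addition and multiplication to pass from $\chi(\veps_i)\le q(\veps_i)$ to $\sum_i x_i^2\chi(\veps_i)\le\sum_i x_i^2 q(\veps_i)$.
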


\begin{proof}
For quasilinear  $\chi\le q$, we conclude from \lemref{lem:I.5.10}
that $\chi=\chi_{\QL}\le q_{\QL}.$
\end{proof}

We now look at rigid complements in terms of the ordering
\eqref{eq:I.5.14}.

\begin{thm}\label{thm:I.5.12}
Assume that $R$ is u.b. and satisfies Remark~\ref{assump}.
\begin{enumerate}
\item[a)] The $R$-module $\Rig(V)$ is a lower set in $\Quad(V),$ i.e., if $\chi$ and $\rg$ are quadratic forms on $V$ with $\rg$ rigid and $\chi\le\rg,$ then $\chi$ is rigid. \pSkip

    \item[b)] For any quadratic form $q$ on $V$, the  set $\Rig(q)$ of rigid complements $b$ in $q$ is convex in $\Quad(V).$ \pSkip

        \item[c)] If $q$ is quasilinear, then $\Rig(q)$ is a lower set in $\Quad(V)$ closed under addition.
        \end{enumerate}
        \end{thm}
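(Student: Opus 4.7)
The plan is to reduce all three parts to the characterization of rigid forms from \thmref{thm:I.2.13} (rigidity is equivalent to vanishing on every base vector) combined with antisymmetry of the minimal ordering, using the elementary fact that $0\le x$ holds for every $x$ in any u.b.~semiring since $0+x=x$.

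For part~(a), I evaluate the inequality $\chi\le\rg$ at each base vector $\veps_i$. Since $\rg$ is rigid, $\rg(\veps_i)=0$ by \thmref{thm:I.2.13}, and the squeeze $0\le\chi(\veps_i)\le 0$ forces $\chi(\veps_i)=0$ by antisymmetry; a second application of \thmref{thm:I.2.13} shows $\chi$ is rigid.

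For part~(b), which I expect to be the most delicate step, I take $\rg_1,\rg_2\in\Rig(q)$ and $\chi$ with $\rg_1\le\chi\le\rg_2$. Rigidity of $\chi$ will follow from part~(a). Adding $q_{\QL}$ throughout the chain yields the pointwise inequalities
$$q(x)\;=\;q_{\QL}(x)+\rg_1(x)\;\le\;q_{\QL}(x)+\chi(x)\;\le\;q_{\QL}(x)+\rg_2(x)\;=\;q(x),$$
and antisymmetry of the minimal ordering (applied pointwise) gives $q_{\QL}+\chi=q$. To see this really is a quasilinear-rigid decomposition of $q$ with the correct quasilinear part, I invoke the additivity formula~\eqref{eq:I.5.10}: the quasilinear part of $q_{\QL}+\chi$ equals $q_{\QL}+0=q_{\QL}$, since the quasilinear part of any rigid form is zero. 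This places $\chi$ in $\Rig(q)$.

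For part~(c), when $q$ is quasilinear we have $q_{\QL}=q$, so $\rg\in\Rig(q)$ amounts to $\rg$ being rigid together with $q+\rg=q$. The lower-set claim is again a squeeze: if $\chi\le\rg\in\Rig(q)$, then part~(a) gives rigidity of $\chi$, and the chain $q\le q+\chi\le q+\rg=q$ combined with antisymmetry gives $q+\chi=q$. Closure under addition is direct: $q+(\rg_1+\rg_2)=(q+\rg_1)+\rg_2=q+\rg_2=q$, and rigidity of $\rg_1+\rg_2$ follows from the fact that $\Rig(V)$ is a submodule of $\Quad(V)$ (Remark~\ref{rem:I.5.9}\,a).
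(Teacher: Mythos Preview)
Your proof is correct. Part~(a) matches the paper verbatim. For parts~(b) and~(c), however, you take a genuinely different and more elementary route.

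The paper proves~(b) by invoking the bijection between $\Rig(q)$ and the set of off-diagonal companions of $q$ (\propref{prop:I.5.6}), reducing convexity of $\Rig(q)$ to convexity of each entry $C_{ij}(q)$ of the companion table (\propref{prop:I.3.8}). For~(c) the paper again passes through companions, deducing closure under addition from \propref{prop:I.3.9}. You bypass this companion-table machinery entirely: your squeeze $q=q_{\QL}+\rg_1\le q_{\QL}+\chi\le q_{\QL}+\rg_2=q$ uses only the compatibility of the minimal ordering with addition and antisymmetry, and your closure-under-addition argument in~(c) is a two-line direct computation. (The appeal to~\eqref{eq:I.5.10} in your part~(b) is harmless but superfluous: once you know $\chi$ is rigid and $q_{\QL}+\chi=q$, the definition of $\Rig(q)$ applies immediately, since uniqueness of the quasilinear part is already \propref{prop:I.5.2}.)

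What each approach buys: yours is self-contained and shows that \thmref{thm:I.5.12} does not actually depend on the companion-matrix apparatus of \S\ref{sec:I.3}; the paper's proof, on the other hand, ties the structure of $\Rig(q)$ explicitly to the sets $C_{ij}(q)$, which is the viewpoint needed for the finer analysis in \thmref{thm:I.5.13}.
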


        \begin{proof}
        $ $ \begin{enumerate}
        \item [a):] This is obvious from the fact that a form $\rg$ on $V$ is rigid iff $\rg(\veps_i)=0$ for every $i\in I.$ \pSkip

        \item[b):] This follows from \propref{prop:I.5.6} and the fact that in the companion matrix of $q$ the off-diagonal entries $C_{ij}, i\neq j,$ are convex in $R$ (\propref{prop:I.3.8}).
            \pSkip

        \item[c):] $\Rig(q)$ is a lower set, since $\Rig(q)$ is convex and $0\in\Rig(q).$ It follows from Propositions~\ref{prop:I.5.6} and \ref{prop:I.3.9}, that $\Rig(q)$ is closed under addition.
        \end{enumerate}
        \end{proof}

        We turn to the seemingly more subtle question of whether the convex set $\Rig(q)$ has maximal or minimal elements, and how many. In the case that $R$ is a nontrivial tangible supersemifield, we can resort to the explicit determination of the companion matrix in \S\ref{sec:I.4} to get an answer.

        \begin{thm}\label{thm:I.5.13}
        Assume that  $q$ is a quadratic form on a free module $V$
        over a nontrivial tangible supersemifield  $R$.
        \begin{enumerate}
        \item[i)] For any $\rg\in\Rig(q),$ there exists a minimal element $\rg'$ in $\Rig(q)$ with $\rg'\le\rg.$ \pSkip
        \item[ii)] If $R$ is dense, then $\Rig(q)$ has a unique minimal element.
        \pSkip
        \item[iii)] If $R$ is discrete, and $\rg_1,\rg_2$ are minimal elements of $\Rig(q),$ then $\rg_1(x)\cong_\nu \rg_2(x)$ for every $x\in V.$
            \pSkip

            \item[iv)] If $R$ is discrete then $\Rig(q)$ has a unique maximal element, while if $R$ is dense it can happen that $\Rig(q)$ has no maximal element.
            \end{enumerate}
            \end{thm}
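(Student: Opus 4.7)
My plan is to reduce everything to the structure of the sets $C_{i,j}(q)$ as computed in Theorems~\ref{thm:I.4.12} and~\ref{thm:I.4.13}. By \propref{prop:I.5.6} (together with \thmref{thm:I.3.3} and the fact that $0\in C_{i,i}(q)$ from \propref{prop:I.3.12}), after fixing a total order on the index set $I$, a rigid complement $\rg\in\Rig(q)$ is encoded by an independent choice of coefficients $(\bt_{i,j})_{i<j}$ with $\bt_{i,j}\in C_{i,j}(q)$, through $\rg(x)=\sum_{i<j}\bt_{i,j}x_ix_j$. Evaluating at $x=\veps_i+\veps_j$ shows that $\rg\le\rg'$ in $\Quad(V)$ forces $\bt_{i,j}\le\bt'_{i,j}$ in $R$ for every $i<j$; conversely, writing $\bt'_{i,j}=\bt_{i,j}+z_{i,j}$ (\propref{prop:I.ub.5}), the form $w(x):=\sum z_{i,j}x_ix_j$ satisfies $\rg'=\rg+w$, yielding $\rg\le\rg'$ pointwise. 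Hence every existence and uniqueness question about minimal or maximal elements of $\Rig(q)$ is the coordinatewise product of the same question for each $C_{i,j}(q)$.

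The next step is to catalogue the possible shapes of $C_{i,j}(q)$. From Theorems~\ref{thm:I.4.12} and~\ref{thm:I.4.13}, exactly four types can occur: (a) a singleton $\{\al\}$; (b) a closed interval $[0,c]$ with ghost endpoint $c$; (c) the half-open interval $[0,e\elm\al_1[$, which arises only in the dense Case~II of \thmref{thm:I.4.13}; and (d) a $\nu$-class $\{\bt\in R \ds| \bt\cong_\nu \al\}$, which arises only in the discrete exceptional branch of \thmref{thm:I.4.12}. In (a) and (b) both minimum and maximum are unique. For (d) I would argue that every tangible $\tau$ with $e\tau=e\al$ is a minimal element: it satisfies $\tau+\tau=e\tau=e\al$ (so $\tau\le e\al$), while two distinct tangibles with the same ghost are $\le_R$-incomparable by direct inspection of the addition axioms \eqref{eq:0.4}--\eqref{eq:0.6}; the unique maximum of (d) is $e\al$. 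For (c) the minimum is $0$ but no maximum exists, since density of $eR$ (Terminology~\ref{term:I.4.8}(a)) permits elements of $R$ whose ghost values strictly increase toward $e\elm\al_1$ without reaching it.

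With this catalogue, (i) is immediate: every $C_{i,j}(q)$ contains a minimal element below any prescribed one (in case~(d), replace an element $\bt$ that is ghost by any tangible $\tau\cong_\nu\al$, and otherwise leave $\bt$ alone), and these assemble coordinatewise into a minimal $\rg'\le\rg$ in $\Rig(q)$. For (ii), in a dense $R$ only cases (a), (b), (c) occur, each with a unique minimum, so $\Rig(q)$ has a unique minimum. For (iii), in a discrete $R$ only cases (a), (b), (d) occur, and in each the minimal elements are pairwise $\nu$-equivalent; hence if $\rg_1,\rg_2\in\Rig(q)$ are minimal with coefficients $\bt_{i,j}^{(k)}$ we have $e\bt_{i,j}^{(1)}=e\bt_{i,j}^{(2)}$ for all $i<j$, giving $e\rg_1(x)=e\rg_2(x)$ for every $x\in V$.

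Finally for (iv): in the discrete case each of (a), (b), (d) has a unique maximum ($e\al$ in case~(d), since every element there is $\le_R e\al$), so the coordinatewise maximum is a unique maximum of $\Rig(q)$. For the dense counterexample I would choose a dense tangible supersemifield $R$ whose ordered ghost group $\tG$ contains a non-square element (for instance $\tG=\Z\oplus\Q$ with lexicographic order) and pick $\al_1\in\tT$ with $\al_1$ not a $\nu$-square; then on $V=R\veps_1+R\veps_2$ the quasilinear form $q=[\al_1,1]$ has $\al=0$ and $\al_1\al_2=\al_1$ not a $\nu$-square, placing us in Case~II of \thmref{thm:I.4.13}, so $C_{1,2}(q)=[0,e\elm\al_1[$ has no maximum and $\Rig(q)$ inherits this failure. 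The main obstacle I foresee is the careful handling of case~(d): verifying that tangibles with identical ghost are pairwise $\le_R$-incomparable and that $e\al$ dominates every element in the class; once case~(d) is in hand, the remaining arguments are straightforward componentwise reasoning.
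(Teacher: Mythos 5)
Your proposal is correct and follows essentially the same route as the paper: reduce via Proposition~\ref{prop:I.5.6} (and Theorem~\ref{thm:I.3.3}) to the coordinatewise structure of the off-diagonal sets $C_{i,j}(q)$, then read off the minimal/maximal behaviour from the four shapes these sets can take according to Proposition~\ref{prop:I.4.10} and Theorems~\ref{thm:I.4.12}, \ref{thm:I.4.13}. Your write-up merely makes explicit two things the paper leaves implicit --- the order-isomorphism $\Rig(q)\cong\prod_{i<j}C_{i,j}(q)$ and a concrete dense supersemifield witnessing the ``no maximum'' clause of (iv) --- both of which are correctly argued.
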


            \begin{proof} All assertions follow from  \propref{prop:I.5.6} by a sharp look at the description of the off-diagonal entries $C_{ij}(q),$ $(i\ne j)$, of the companion matrix of $q$ via \propref{prop:I.4.10} and Theorems \ref{thm:I.4.12} and \ref{thm:I.4.13}. Note that in the delicate case that formula \eqref{eq:I.4.5} in \thmref{thm:I.4.12} comes into play, the set $C_{12}(q)=\{\bt\in R \ds |\bt\cong_\nu \al\}$ has the unique maximal element $e\al$ and all other elements of $C_{12}(q)$ are minimal, while in \thmref{thm:I.4.13}.b, Case II, where $R$ is dense, $C_{12}(q)$ has no maximal element, but has the minimal element zero.
            \end{proof}

\section{The supertropicalizations of a quadratic form}\label{sec:I.11}

Let $R$ be a ring, $V$ a free $R$-module, and $q: V\to R$ a
quadratic form. Assume further that~$U$ is a supertropical
semiring with ghost ideal $M:=eU,$ and $\vrp:R\to U$ is a
supervaluation (cf. Introduction and \cite{IzhakianKnebuschRowen2009Valuation}). (The case of primary interest that we
have in mind is that~$R$ is a field and $U$ is a supertropical
semifield, and hence $e\vrp: R\to M$ is a Krull valuation, in
multiplicative notation.) We describe a procedure to associate to
$q$ a quadratic form over~$U$ in various ways.

First we choose a base $(v_i \ds |i\in I)$ of the free $R$-module
$V.$\pSkip

Let $U^{(I)}$ denote the free $U$-module  consisting of the tuples $x=(x_i \ds |i\in
I)$ with $x_i\in U,$ almost all $x_i=0.$  It has the standard base  $(\veps_i
\ds |i\in I)$ consisting of the tuples with one coordinate $1$,  all other coordinates $0$.
%
%
Thus
\begin{equation}\label{eq:I.11.1} x=\sum_{i\in I}
x_i\veps_i.\end{equation} We ``extend'' $\vrp: R\to U$ to a map
from $V$ to $U^{(I)},$ denoted by the same letter $\vrp,$ by the
formula $(a_i\in R)$
\begin{equation}\label{eq:I.11.2} \vrp\bigg(\sum_{i\in
I}a_iv_i\bigg):=\sum_{i\in I}\vrp(a_i)\veps_i.\end{equation}
Notice, that for $a\in R, $ $v\in V$
\begin{equation}\label{eq:I.11.3} \vrp(av)=\vrp(a)\vrp(v).\end{equation}

\begin{defn}\label{defn:I.11.1} If $B: V\times V\to R$ is a bilinear
form on the free $R$-module $V,$ we define a bilinear form
$$B^\vrp: U^{(I)}\times U^{(I)}\to U$$ on the $U$-module
$U^{(I)}$ by stating
\begin{equation}\label{eq:I.11.4}B^\vrp(\veps_i,\veps_j)=\vrp
(B(v_i,v_j))\end{equation}
for any two indices $i,j\in I.$ We call $B^\vrp$ the
\textit{supertropicalization} (or ``\textit{stropicalization}''
for short) of $B$ under $\vrp$ with respect to the base $(v_i \ds
|i\in I)$ of $V.$\end{defn}

Thus, if $I=\{1,\dots,n\}$ and, in matrix notation,
$$B=\begin{pmatrix} \bt_{11} & \dots &\bt_{1n}\\
\vdots & \ddots & \vdots\\
\bt_{n,1}& \dots & \bt_{n,n}\end{pmatrix}$$ with
$\bt_{i,j}:=B(v_i,v_j),$ then
\begin{equation}\label{eq:I.11.5} B^\vrp =
\begin{pmatrix} \vrp(\bt_{1,1}) & \dots &\vrp(\bt_{1,n})\\
\vdots & \ddots & \vdots\\
\vrp(\bt_{n,1})& \dots &
\vrp(\bt_{n,n})\end{pmatrix}.\end{equation}

\begin{remark}\label{rem:I.11.2}
It follows from \eqref{eq:I.11.4} that for any two vectors $a,b\in
\bigcup_{i\in I} Rv_i$
\begin{equation}\label{eq:I.11.6}B^\vrp(\vrp(a),\vrp(b)) \ds =\vrp(B(a,b)).\end{equation}
For other vectors $a,b$ in $V$ this often fails.
\end{remark}

 We are now ready to define the supertropicalizations
of a given quadratic form $q: V\to R$. We choose a total ordering
of the index set $I,$ and then have the unique triangular bilinear
form
$$B:=\triangledown q: V\times V\to R$$
at our disposal, which expands $q$ (cf.~\eqref{eq:I.1.22}) \footnote{All the formulas \eqref{eq:I.1.14}--\eqref{eq:I.1.24} readily generalize to infinite totally ordered bases by use of $I \times  I$-matrices instead of $n\times n $ matrices.}. It gives
us a triangular form $B^\vrp$ on $U^{(I)}.$

\begin{defn}\label{defn:I.11.3}
We define the \bfem{stropicalization}
(=\bfem{supertropicalization}) $q^\vrp:U^{(I)}\to U$ \bfem{of} $q$
\bfem{under} $\vrp$ \bfem{with respect to the ordered base} $\tL:=(v_i
\ds |i\in I)$ of $V$ by the formula
\begin{equation}\label{eq:I.11.7}
q^\vrp(x):=B^\vrp(x,x)
\end{equation}
for $x\in U^{(I)}.$ By this definition
\begin{equation}\label{eq:I.11.8}
\triangledown(q^\vrp) =(\triangledown q)^\vrp.
\end{equation}

If $I=\{1,2,\dots,n\}$ and
\begin{equation}\label{eq:I.11.9}
q=\begin{bmatrix} a_{1,1} & \dots & a_{1,n}\\
 & \ddots &\vdots\\
 &  & a_{n,n}\end{bmatrix},
\end{equation}
then
\begin{equation}\label{eq:I.11.10}
q^\vrp=\begin{bmatrix}\vrp( a_{1,1}) & \dots & \vrp(a_{1,n})\\
 & \ddots &\vdots\\
 & & \vrp(a_{n,n})\end{bmatrix}.
\end{equation}
 \end{defn}

In other terms, if we write $q$ as a polynomial with variables
$\lm_1,\dots,\lm_n,$ $$q=\sum_{i\le j}a_{i,j}\lm_i\lm_j \ds \in
R[\lm_1,\dots,\lm_n],$$ then
$$q^\vrp=\sum_{i\le j}\vrp(a_{i,j})\lm_i\lm_j \ds \in
U[\lm_1,\dots, \lm_n].$$ This means that $q^\vrp$ is the
supertropicalization of the polynomial $q$, as defined in
\cite{IzhakianRowen2007SuperTropical} and
\cite[\S9]{IzhakianKnebuschRowen2009Valuation}. More precisely,
$q^\vrp$ is the functional quadratic form on $U^{(n)}$ represented
by this polynomial.

As a consequence of Definition \ref{defn:I.11.3} and Remark
\ref{rem:I.11.2}, we have:

\begin{remark}\label{rem:I.11.4}
For any vector $a$ in $\bigcup_{i\in I} Rv_i,$ we have
$$q^\vrp(\vrp(a))=\vrp(q(a)),$$
while for other vectors in $V$ this may fail.\end{remark}

The stropicalization $q^\vrp$ comes with the balanced companion
\begin{equation}\label{eq:I.11.11}
\bt:=B^\vrp+(\trn{B})^\vrp,\end{equation} where $B:=\triangledown
q$ (cf. Example~\ref{examp:I.1.4}). For $i\ne j$, we have
\begin{equation}\label{eq:I.11.12.0}\bt(\veps_i,\veps_j)=\vrp(a_{i,j}), \end{equation}
while
\begin{equation}\label{eq:I.11.12}
\bt(\veps_i,\veps_i)=\vrp(a_{i,i})+\vrp(a_{i,i})=e\vrp(a_{i,i}).\end{equation}

\begin{remark}\label{rem:I.11.5}
This balanced companion $\bt$ can be different from the
stropicalization $b^\vrp$ of the companion $b$ of $q.$ Indeed,
\begin{equation}\label{eq:I.11.13} b^\vrp(\veps_i,\veps_j)=\bt(\veps_i,
\veps_j) = \vrp(a_{i,j})\end{equation} for $i\ne j$, but
\begin{equation}\label{eq:I.11.14} b^\vrp(\veps_i,\veps_i)=\vrp(2a_{i,i} ),
 \end{equation}
 which may very well be tangible, even if $2$ is not a unit in the
 valuation ring of $e\vrp.$
Also $b^\vrp$ is a companion of $q^\vrp$, as can easily be deduced
from Example~\ref{examp:I.2.5}, observing that $\vrp(2a_{ii})
+e\vrp(a_{ii}) = e\vrp(a_{ii})$.
\end{remark}

Let us denote the stropicalization $(U^{(I)},q^\vrp )$ of the
quadratic module $(V,q)$  with respect to the base $\tL$ more
briefly by $(\tlV, \tlq)$. The quadratic $U$-module  $(\tlV, \tlq)$
is a rather rigid object, since, by \thmref{uniq}, the base
$(\veps_i \| i \in I)$ of $\tlV$ can be only changed projectively.
 In imaginative terms, the base $\tL$ of $V$ becomes ``frozen'' in the free quadratic $U$-module $(\tlV, \tlq)$  obtained from $(V,q)$ by kind of degenerate
scalar extension $\vrp: R \to U.$ \{$\vrp$ is multiplicative, but respects addition only in a very weak way.\}

It makes sense to regard the isomorphism class of $(\tlV, \tlq)$ as an invariant of the pair $(q,\tL)$, measuring the shape of the base $\tL$ with respect to $q$ by use of the supervaluation $\vrp:R \to U$. In the special case that $eU$ is a semifield and $I = \{ 1,2\}$, i.e., $V$ is free of rank $2$, this will become explicit in \cite{QF2} via a study of pairs of vectors in quadratic $eU$-modules by kind of ``tropical trigonometry''.

Already now, in the current state of the infancy of the theory of supertropical quadratic forms, something of interest can be observed by considering the quasilinear part $\tlq_{\QL}$ of~$\tlq$ (cf.~ \S\ref{sec:I.7}).

Assuming for simplicity that $I$ is finite, $I = \{1,\dots, n \}$, let $a_i := q(v_i)$ ($1 \leq i \leq n $). Then~$\tlq_{\QL}$ can be viewed as the sequence of square classes (cf. Definition \ref{defn:I.4.4})
\begin{equation}\label{eq:I.7.16}
[\vrp(a_1)]_\sq, \ds \dots, [\vrp(a_n)]_\sq.
\end{equation}
Let us just consider the very special case that $R$ is a field of
characteristic $\neq 2$ and $\vrp$ is the initial strong
supervaluation $\brvrp_v$ covering a Krull valuation $v$ on $R$, as
described in \cite[~Theorem
10.2]{IzhakianKnebuschRowen2009Valuation}, and assume also that
$v(2) \neq 0.$ If we now choose for $\tL$ an orthogonal base of
$(V,q)$ and neglect the ordering of $I$, i.e., consider the
$S_n$-orbit  of the sequence \eqref{eq:I.7.16}, then it is easy to
interpret this orbit in classical terms, namely as the set of all
residue class forms of the $q$ with respect to the valuation $v$.
\{These are the non-zero forms $\lm_*(cq)$, with $c \in  R^*$ and
$\lm$ the canonical place $R \to \mfo_v / m_v \cup \infty$
associated to $v$, cf. \cite[Chapter 1]{Spez}.\} But for other bases
of $V$ the sequence \eqref{eq:I.7.16} seems to be something
genuinely new in quadratic form theory.

%
%
%

 The isomorphism class of $(\tlV,\tlq)$ itself  is an invariant of $(q,\tL)$, which  perhaps is too clumsy for
  practical purposes if $\dim(V)$ is big. But we can look for quotients of submodules of $\tlV$ by equivalence relations, which are
 compatible with $\tlq$ in a suitable sense, and thus gain supertropical quadratic modules to be used
 as invariants of $(q,\tL)$, which can be handled more easily.
 To give just one example: After choosing a companion $\tlb$ of $\tlq$, it turns out\footnote{To be proved in a sequel to this paper.}, that for
 any $c\in U\setminus\{0\}$ and $x\in\tlV$ the set
 $$B_c(x):=\{y\in\tlV \ds | \tlb(x,y)^2\le c\tlq(x)\tlq(y)\},$$
 is a $U$-submodule of $\tlV.$

  The family $(B_c(x) \ds |c\in R\setminus\{0\}),$
 (with fixed $x\in\tlV)$ is a kind of filtration of the quadratic module~$(\tlV,\tlq)$, which well deserves
 to be studied under this perspective.

 Note that submodules of $\tlV,$ and, all the more, quotients of these, most often are not free. Already
 this indicates the need for a supertropical quadratic form theory admitting $U$-modules of a rather general
 nature, not just free ones.

\end{document}